\numberwithin{equation}{section}
\newtheorem{ques}{Question}
\newtheorem{theo}{Theorem}[section]
\newtheorem{defi}{Definition}[section]
\newtheorem{cl}{Claim}[section]
\newtheorem{prop}{Proposition}[section]
\newcommand{\eps}{\varepsilon}
\newcommand{\R}{\mathbb{R}}
\begin{document}

\title{Non planar free boundary minimal disks into ellipsoids}
\author{Romain Petrides}
\address{Romain Petrides, Universit\'e de Paris, Institut de Math\'ematiques de Jussieu - Paris Rive Gauche, b\^atiment Sophie Germain, 75205 PARIS Cedex 13, France}
\email{romain.petrides@imj-prg.fr}

\begin{abstract} 
We prove the existence of embedded non planar free boundary minimal disks into rotationally symmetric ellipsoids of $\R^3$. The construction relies on the optimization of combinations of first and second Steklov eigenvalues renormalized by the length of the boundary, among metrics on the disk. We also prove that non planar free boundary harmonic maps from a disk into an ellipsoid of $\R^3$ such that the coordinate functions are first or second Steklov eigenfunctions with respect to the associated critical metric is a minimal immersion (without branched points) and that if the critical metric is even with respect to the coordinates of the disk, then the minimal immersion is an embedding.
\end{abstract}

\maketitle

We consider the following 2-ellipsoid parametrized by $\sigma:= (\sigma_1,\sigma_2,\sigma_3)$:
$$ \mathcal{E}_{\sigma}:= \{ (x_1,x_2,x_3) \in \mathbb{R}^3 ; \sigma_1 x_1^2 + \sigma_2 x_2^2 + \sigma_3 x_3^2 = 1 \} $$
with semi-axes $\left(\sigma_i\right)^{-\frac{1}{2}}$ for $i=1,2,3$. The equatorial curves $\{x_i = 0\} \cap \mathcal{E}_{\sigma}$ for $i=1,2,3$ are simple closed geodesic in $\mathcal{E}_{\sigma}$. By a result by Morse \cite{morse} (see also \cite{klingenberg}), they are also the only ones for ellipsoids close to the sphere. In a celebrated result, proved by the combination of the min-max method by Lusternik and Schnirelmann \cite{LS47} and mean flow methods by Grayson \cite{grayson}, any closed Riemannian two-sphere contains at least 3 simple closed geodesics. It was a refinement of the min-max method initiated by Birkhoff \cite{birkhoff} to prove existence of simple closed geodesics in a Riemannian topological sphere. Many generic ellipsoids $\mathcal{E}_{\sigma}$ realize then the minimal number of simple closed geodesics $3$. Later, Viesel \cite{viesel} proved existence of ellipsoids that contain an arbitrary number of simple closed geodesics (see also \cite{klingenberg2}).

In the same spirit, we ask for embedded free boundary minimal disks into $\mathcal{E}_{\sigma}$. By definition, free boundary minimal disks into a surface $\Sigma \subset \R^3$ are topological disks that are critical points of the area functional among all disks $D$ such that $\partial D \subset \Sigma$. They are exactly the minimal disks $D$ of $\R^3$ such that $\partial D \subset \Sigma$ and $D$ meets orthogonally $\Sigma$ on $\partial D$. The planar disks $ \{ x_i = 0 \} \cap co\left(\mathcal{E}_{\sigma}\right)  $ are the first trivial examples of free boundary minimal embedded disks with $\Sigma = \mathcal{E}_{\sigma}$. In the current paper we are interested in the following question:

\begin{ques}[Dierkes, Hildebrandt, K\"uster, Wohlrab, 1993, \cite{DHKW} p335] \label{Q1}
Are there non-planar embedded free boundary minimal disks into ellipsoids of $\R^3$ ? 
\end{ques}

One analogous question by Yau in 1987 \cite{yau} was raised in the case of 2-spheres into 3-ellipsoids: are the only minimal 2-spheres in an ellipsoid centered about the origin in $\R^4$ the planar ones?  Haslhofer and Ketover recently proved  that the answer is no \cite{HK19} for sufficiently elongated ellipsoids, thanks to subtle min-max and mean curvature flow methods. Since then, other constructions of non planar minimal spheres in \cite{BP22} and \cite{Pet23} have been given by different methods. In \cite{BP22}, Bettiol and Piccione build an arbitrary number of non planar minimal spheres for sufficiently elongated rotationally symmetric ellipsoids by bifurcations methods. In \cite{Pet23}, we use analogous techniques as in the current paper with optimization of combination of eigenvalues of the Laplacian.

\medskip 

Notice that the free boundary minimal 2-disks into the boundary of a 3-d manifold can purely be seen as a 1-d problem into a 2-d surface in a non local setting: they are half-harmonic maps into the surface. 
More precisely, if it is classical that $f : \R^2_+ \to \R$ 
is a harmonic function, then $- \partial_y f = \Delta^{\frac{1}{2}} f$ on $\R \times \{0\}$. Following the definition by Da Lio and Rivi\`ere \cite{DR11}, if $\Sigma\subset \R^3$ is a surface, we say that $f : \R \to \Sigma$ is a half harmonic map into $\Sigma$ if it is a critical for the energy $\int_{\R} \left\vert \Delta^{\frac{1}{4}} f \right\vert^2$ under the constraint $f \in \Sigma$ a.e. The Euler-Lagrange equation reads as
\begin{equation*}  \Delta^{\frac{1}{2}} f  \perp T_{f} \Sigma \text{ on } \R\times \{0\}. \end{equation*}
We set $\Phi = \hat{f} \circ h : \mathbb{D} \to \R^3$, where $h : \mathbb{D} \to \mathbb{R}^2_+$ is the biholomorphism such that $h(1) = 0$, $h(0) = 1$ and $h(-1)=\infty$ and $\hat{f} : \R^2_+ \to \R^3 $ is the harmonic extension of $f$. Then $f$ is a half-harmonic map into $\Sigma$ if and only if $\Phi$ is a free boundary harmonic map into $\Sigma$ (defined as critical maps of $\int_{\mathbb{D}} \left\vert \nabla \Psi \right\vert^2$ under the constraint $\Psi(\mathbb{S}^1)\subset \Sigma$ a.e). In this case, the equation reads as:
\begin{equation*} 
\begin{cases}
\Delta \Phi = 0 \text{ in } \mathbb{D} \\
\partial_r \Phi \parallel \Delta^{\frac{1}{2}} f  \perp T_{\Phi} \Sigma \text{ on } \mathbb{S}^1.
 \end{cases} \end{equation*}
The restriction at the boundary then satisfies a non local version of the equation of closed geodesics on $\Sigma$.
By a classical argument on the Hopf differential, if $\Phi$ is a free boundary harmonic map, then it is a branched conformal immersion and then a free boundary minimal branched immersed disk into $\Sigma$. If $\Sigma = \mathcal{E}_\sigma$, there is a natural weaker question than the previous one: 

\begin{ques} \label{Q2}
Is there a non planar simple closed half harmonic maps $\Phi : \mathbb{S}^1 \to \mathcal{E}_\sigma$ ? 
\end{ques}

The answer of Question \ref{Q1} and \ref{Q2} is no if the ellipsoid is a round sphere ($\sigma_1 = \sigma_2 = \sigma_3$) by Nitsche \cite{nitsche}: the only free boundary minimal disks into spheres are the planar ones. This result is the same as the classical one for 2-spheres into 3-spheres by Almgren \cite{almgren}. In this context, free-boundary minimal disks are more rigid, because even if we increase the dimension of the target sphere, the only free boundary branched immersed minimal disks are the planar ones \cite{FS15}, while we have existence of many non planar minimal spheres into higher dimensional spheres (leading to a theory initiated by Calabi \cite{calabi} and Barbosa \cite{barbosa}). 

\medskip

The construction of free boundary minimal disks has been investigated by Struwe \cite{struwe}, Fraser \cite{fraser} (with methods inspired by Sacks and Uhlenbeck \cite{SU81}), Laurain, Petrides \cite{LP19} and Lin, Sun, Zhou \cite{LSZ20} (with min-max methods inspired the replacement or shortening process by Birkhoff and Colding and Minicozzi \cite{CM08}), Gruter-Jost \cite{GJ86} and Li-Zhou \cite{LZ21} (with min-max methods coming from geometric measure theory initiated by Almgren \cite{almgren2}). In all these constructions, the variational methods are not as refined as we need to obtain new \textit{embedded} free boundary minimal \textit{disks} into \textit{2-ellipsoids}. As in the closed case, the free boundary version \cite{wang} by Wang of the proof of the celebrated Yau conjecture by Song \cite{song} is not helpful to build new disks, since they do not give information on the topological type of their minimal surfaces.

The submentioned methods in the closed case: \cite{HK19} or \cite{BP22} may still be possible to address the free boundary case but involve new difficulties. For instance in \cite{BP22}, the authors spectacularly use a rotational invariance assumption to prove existence of an arbitrary number of embedded non-planar rotationally invariant minimal spheres into sufficiently ellongated rotationally symmetric 3-d ellipsoids with bifurcation methods. However, we cannot build any embedded non-planar rotationally invariant free boundary minimal disk into a rotationally symmetric 2-ellipsoid. 

\medskip

In the current paper, we shall use an indirect way: the shape optimization of combinations of Steklov eigenvalues. Indeed, in \cite{Pet21}, (see also \cite{Pet22} and \cite{PT23}) the author noticed that all possibly branched conformally immersed minimal map $\Phi$ from a 2-surface with boundary $\Sigma$ into a n-ellipsoid $\mathcal{E}$ can be seen as a shape associated to a critical metric of some functional depending on combinations of Steklov eigenvalues with respect to the metric. In addition, the $k$-th parameter of the ellipsoid is associated to a Steklov eigenvalue involved in the functional at the critical metric, and the $k$-th coordinate of the immersion is an eigenfunction associated to this eigenvalue. This is a generalization of a result by Fraser and Schoen when only one eigenvalue appears in the functional: the target manifold is then a ball. Their discovery led to a method for the construction of free boundary minimal surface of genus $0$ into 2-spheres \cite{FS16} extended in \cite{Pet19} for higher genuses and higher eigenvalues. Applying the variational methods developped for combinations of Steklov eigenvalues in \cite{Pet21} and \cite{Pet22}, we are in position to prove the following result:
\begin{theo} \label{theomain} For any $s \neq 0$, there is a one parameter family $(p_{s,t})_{t > t_{\star}(s)}$ for some $t_{\star}(s)>0$ such that there is an embedded free boundary non planar minimal topological disk $D_{s,t}$ into the rotationally symmetric ellipsoid 
$$\mathcal{E}_{\sigma_{s,t}} := \{ x\in \mathbb{R}^3 ; p_{s,t} x_0^2 + x_1^2 +  x_2^2  = 1 \}$$ 
and such that the coordinates $x_0 , x_2 , x_2$ are first and second Steklov eigenfunctions for an induced metric $g_{s,t} = e^{2v_t} \xi$ on $D_{s,t}$ such that
$$ e^{v_{s,t}} = \frac{1}{\left(p_{s,t}^2 x_0^2 +  x_1^2 +  x_2^2  \right)^{\frac{1}{2}}} \text{ on } \partial{D_{s,t}} \text{ and } L_{s,t} = \int_{\partial D_{s,t}}dL_{g_{s,t}}$$
where $\xi$ is the standard Euclidean metric. For any $s$ and $t_1 < t_2$, $D_{s,t_1}$ is not isometric to $D_{s,t_2}$. Moreover $t \mapsto L_{s,t}p_{s,t}$ is decreasing and $t \mapsto L_{s,t}$ is increasing, and we have that $p_{s,t} \to 0$ and $L_{s,t} \to 4\pi$ as $t\to +\infty$ and that $D_{s,t}$ converges as varifolds to the disk $\{0\} \times \mathbb{D}^2$ with multiplicity $2$ as $t\to +\infty$.
\end{theo}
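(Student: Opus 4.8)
The plan is to realize each disk $D_{s,t}$ as the shape attached, via the correspondence of \cite{Pet21}, to a metric on $\mathbb{D}$ maximizing a combination of the first two Steklov eigenvalues renormalized by boundary length. For $s\neq 0$ and $t>0$ I would work with
$$ \mathcal{F}_{s,t}(g)\ :=\ \alpha(s)\,\sigma_1(g)L(g)\ +\ t\,\sigma_2(g)L(g) $$
over the metrics $g=e^{2v}\xi$ on $\mathbb{D}$ (or rather over the associated closure of boundary densities), where $\alpha(s)>0$ for $s\neq 0$ and the normalization is chosen so that the ellipsoid delivered by the correspondence is exactly $\{p\,x_0^2+x_1^2+x_2^2=1\}$: at an optimal metric the (simple) first eigenvalue is carried by $x_0$ and the (double) second eigenvalue by $(x_1,x_2)$, so that $\sigma_2(g_{s,t})=1$, $\sigma_1(g_{s,t})=p_{s,t}$, whence $\sigma_1(g_{s,t})L_{s,t}=p_{s,t}L_{s,t}$ and $\sigma_2(g_{s,t})L_{s,t}=L_{s,t}$. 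The boundary conformal factor is then read off from the correspondence as the stated $e^{v_{s,t}}=(p_{s,t}^2x_0^2+x_1^2+x_2^2)^{-1/2}$, i.e.\ the reciprocal of the norm of the ambient normal of $\mathcal{E}_{\sigma_{s,t}}$. The first and deepest step is to show $\sup_g\mathcal{F}_{s,t}$ is attained by a smooth metric $g_{s,t}$ for all $t>t_\star(s)$; this is exactly the kind of statement proved in \cite{Pet21,Pet22}, so I would check their hypotheses --- the crucial one being a strict gap estimate ensuring the supremum exceeds every value reached by a degenerating competitor (boundary density concentrating, disconnecting, or dropping in multiplicity) --- the threshold $t_\star(s)$ being precisely where this gap is available.

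Given the optimal metrics, the correspondence of \cite{Pet21} yields a free boundary harmonic map $\Phi_{s,t}\colon\mathbb{D}\to\mathcal{E}_{\sigma_{s,t}}$ whose coordinates $x_0,x_1,x_2$ are first and second Steklov eigenfunctions of $g_{s,t}=e^{2v_{s,t}}\xi$, with the displayed $e^{v_{s,t}}$ and $L_{s,t}=\int_{\partial D_{s,t}}dL_{g_{s,t}}$. For non-planarity I would argue that, for $t$ above a threshold $t_\star(s)$, the maximizer satisfies $\sigma_1(g_{s,t})<\sigma_2(g_{s,t})$ --- otherwise $\mathcal{F}_{s,t}(g_{s,t})\le 2\pi(\alpha(s)+t)$ by the sharp bound $\sigma_1 L\le 2\pi$, which for $t$ large is beaten by competitors with $\sigma_2 L$ close to $4\pi$ --- and that it is not a degenerate competitor, so the correspondence attaches to it a $3$-dimensional ellipsoid $\{p_{s,t}x_0^2+x_1^2+x_2^2=1\}$ with all three coordinates of $\Phi_{s,t}$ non-trivial; since a planar free boundary minimal disk in such an ellipsoid would be one of its coordinate-plane sections, $D_{s,t}$ is non-planar. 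Here $\alpha(s)>0$, hence $s\neq 0$, is essential: for the pure $\sigma_2 L$-problem the maximizer degenerates to two disjoint disks. For regularity, $\Phi_{s,t}$ is a free boundary harmonic map into an ellipsoid of $\R^3$ whose coordinates are first or second Steklov eigenfunctions of the corresponding critical metric, so the earlier theorem of the paper applies and $\Phi_{s,t}$ is a minimal immersion with no branched points. For embeddedness, $\mathcal{F}_{s,t}$ and $\mathcal{E}_{\sigma_{s,t}}$ are invariant under the reflections of the coordinates of $\R^3$, so by a symmetrization (or uniqueness) argument $g_{s,t}$ may be taken even with respect to the coordinates of $\mathbb{D}$, and the earlier embeddedness theorem then gives that $D_{s,t}=\Phi_{s,t}(\mathbb{D})$ is embedded.

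The monotonicity and the behaviour as $t\to+\infty$ rest on the affine-in-$t$ structure of $\mathcal{F}_{s,t}$. Since $t\mapsto\mathcal{F}_{s,t}(g)$ is affine for each fixed $g$, the maximal value $V(t):=\mathcal{F}_{s,t}(g_{s,t})$ is convex in $t$, and an envelope argument gives $V'(t)=\sigma_2(g_{s,t})L_{s,t}=L_{s,t}$; convexity then forces $t\mapsto L_{s,t}$ to be nondecreasing, while $W(t):=V(t)-tV'(t)=\alpha(s)\,p_{s,t}L_{s,t}$ has $W'(t)=-tV''(t)\le0$, so $t\mapsto L_{s,t}p_{s,t}$ is nonincreasing; strict convexity (which fails only where $g_{s,t}$ is locally constant in $t$) upgrades these to strict monotonicity of $L_{s,t}$, so that $D_{s,t_1}$ and $D_{s,t_2}$, having different boundary lengths, are not isometric for $t_1<t_2$. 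For the limit, the sharp disk bounds $\sigma_1 L\le 2\pi$ and $\sup_g\sigma_2(g)L(g)=4\pi$ give $4\pi t\le V(t)\le 4\pi t+2\pi\alpha(s)$, and together with $0\le\alpha(s)p_{s,t}L_{s,t}\le 2\pi\alpha(s)$ this squeezes $L_{s,t}=V(t)/t-\alpha(s)p_{s,t}L_{s,t}/t$ between $4\pi-2\pi\alpha(s)/t$ and $4\pi$, so $L_{s,t}\uparrow 4\pi$. Since a disk metric with $\sigma_2 L$ close to its supremum must be close to the degenerate two-disk configuration --- on which the first Steklov eigenvalue tends to $0$ --- one gets $\sigma_1(g_{s,t})\to 0$, i.e.\ $p_{s,t}\to 0$ and $p_{s,t}L_{s,t}\to 0$. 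Finally $\mathrm{Area}(D_{s,t})=\tfrac12\int_{\mathbb{D}}|\nabla\Phi_{s,t}|^2=\tfrac12 L_{s,t}$ is bounded, so $(D_{s,t})$ is precompact as varifolds; using $p_{s,t}\to 0$ (so the $x_0$-component of $\Phi_{s,t}$ collapses) together with the fact that the two lobes emerging from the degeneration each carry $(x_1,x_2)$ as second eigenfunctions and hence each cover $\{0\}\times\mathbb{D}^2$, one identifies the limit varifold as $\{0\}\times\mathbb{D}^2$ with multiplicity $2$ (consistently, $2\cdot\pi=2\pi=\lim\tfrac12 L_{s,t}$).

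The real obstacle is the attainment statement of the first paragraph: proving $\sup_g\mathcal{F}_{s,t}$ is realized by a smooth, non-degenerate metric for every $t>t_\star(s)$ demands control of the loss of compactness of maximizing sequences (boundary Steklov bubbles, collapse of the conformal factor, disconnection of the boundary) and the gap estimate separating the genuine maximum from all lower-complexity competitors; this is exactly the point at which the full variational machinery of \cite{Pet21,Pet22} must be invoked. Everything downstream --- minimality from the earlier theorem, embeddedness from the reflection symmetry, and the monotonicity and asymptotics from the affine structure and classical Steklov compactness --- is then comparatively soft.
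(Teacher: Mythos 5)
Your overall strategy (optimize a combination of $\bar\sigma_1,\bar\sigma_2$, pass through the correspondence of \cite{Pet21}, then quote the paper's regularity and embeddedness theorems, and get monotonicity/asymptotics from the structure of the functional) is the paper's strategy, and replacing $h_{s,t}$ by a linear combination $\alpha(s)\bar\sigma_1+t\bar\sigma_2$ is in itself legitimate. But there is a genuine gap at the embeddedness step. Theorem \ref{theoembedded} requires the critical weight $e^{v}$ on $\mathbb{S}^1$ to be even in both coordinates of the \emph{domain} disk; invariance of your functional or of $\mathcal{E}_{\sigma_{s,t}}$ under reflections of the coordinates of $\R^3$ is irrelevant to this, and the invariance of the eigenvalue functional under precomposition by isometries of $\mathbb{D}$ does not make maximizers symmetric. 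There is no known symmetrization that is monotone for Steklov eigenvalues of boundary densities, and uniqueness of maximizers is unknown, so ``by a symmetrization (or uniqueness) argument $g_{s,t}$ may be taken even'' has no proof behind it. The paper's resolution is to restrict the admissible densities to the symmetric class from the outset and to redo the entire existence theory under this constraint: a deformation lemma with symmetric deformations (Proposition \ref{deformationlemma}), symmetric Palais--Smale sequences, the bubble-convergence analysis of Proposition \ref{palaissmalesteklov} in the constrained setting, and crucially the \emph{symmetric} test functions of Theorem \ref{theotestfunction} so that the non-bubbling gap condition still holds inside the constrained class. This constrained variational machinery is the main technical content of the proof of Theorem \ref{theomain} and is entirely missing from your proposal.

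A second gap is non-planarity. Ruling out $\sigma_1=\sigma_2$ at the maximizer (your Weinstock comparison) does not rule out the planar case: the criticality condition (Proposition \ref{propcritical}) explicitly allows a two-coordinate critical map ($\phi_2=0$) with $\sigma_1<\sigma_2$ both simple, whose image is a planar ellipse region, so ``it is not a degenerate competitor, hence all three coordinates are non-trivial'' is an assertion, not an argument. The paper excludes the planar alternative by showing any such map is a diffeomorphism onto the ellipse (Proposition \ref{propdiffeodim2}), computing the unique candidate's normalized eigenvalues $\bar\sigma_1=2\pi\sqrt{p/q}$, $\bar\sigma_2=2\pi\sqrt{q/p}$, invoking $\bar\sigma_2<4\pi$, and using the mass condition $p/q=\partial_2h_{s,t}/\partial_1h_{s,t}$ together with value comparisons against the flat disk; it is exactly this analysis that produces the threshold $t_\star(s)$, not the $\sigma_1<\sigma_2$ dichotomy. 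For your linear functional the analogous route is available (the mass condition would force $p/q=t/\alpha(s)$, impossible for $t>\alpha(s)$, and the flat disk is beaten for large $t$), but it must be carried out. Similarly, the claims that strict convexity of the value function gives non-isometry of $D_{s,t_1},D_{s,t_2}$ and that ``$\bar\sigma_2$ near $4\pi$ forces $\bar\sigma_1\to0$'' need the bubbling analysis and the mass identities of Proposition \ref{propcritical} (as in the paper's Section \ref{sectionotherarguments}); as written they are plausible but unproved.
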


This theorem gives a positive answer to Question \ref{Q1} and Question \ref{Q2}. We obtain $D_{s,t} = \Phi\left(\mathbb{D}\right)$ as the image of the minimal free boundary immersion $\Phi : \mathbb{D} \to \mathcal{E}_{\sigma_{s,t}}$ associated to a critical metric $g_{s,t} = e^{2v_{s,t}} (dx^2 + dy^2)$ for a minimization problem with respect to the functionals
$$ F_{s,t}: g \mapsto \left(\bar{\sigma}_1(g)^{-s} + t \bar{\sigma}_2(g)^{-s}\right)^{\frac{1}{s}} $$
for $t > 0 $ and $s \neq 0$, where for $k\geq 0$, $\bar{\sigma}_k(g) = \sigma_k(g) L_g(\mathbb{S}^1)$ denotes the renormalized $k$-th Steklov eigenvalue $\sigma_k(g)$ by the length $L_g(\mathbb{S}^1)$ of the boundary of $\mathbb{D}$ with respect to $g$. By convention, notice that $\sigma_0 = 0$ is the simple eigenvalue associated to constant functions because $\mathbb{D}$ is connected and $\sigma_1(g)>0$ is the first non-zero eigenvalue and that we can have $\sigma_2(g) = \sigma_1(g)$ if $\sigma_1(g)$ is multiple. Notice that we can also try many other functionnals depending on $\bar{\sigma}_1$ and $\bar{\sigma}_2$ than $F_{s,t} =: h_{s,t}(\bar{\sigma}_1, \bar{\sigma}_2 )$.

With the notations of Theorem \ref{theomain}, we have $\bar{\sigma}_1\left(D_{s,t},g_{s,t}\right) = p_{s,t}L_{s,t}$ and  $\bar{\sigma}_2\left(D_{s,t},g_{s,t}\right) = L_{s,t}$. Since $\bar{\sigma}_2(g_{s,t})\to 4\pi$, \ref{eqW3} shows that $g_{s,t}$ is a particular maximizing sequence for $\bar{\sigma}_2$ as $t\to +\infty$. Notice also that
\begin{equation*}
\begin{split} L_{s,t} = & \int_{\partial D_{s,t}}dL_{g_{s,t}} = \int_{\partial D_{s,t}} (p_{s,t}x_0^2 + x_1^2 + x_2^2)dL_{g_{s,t}} 
\\ = & \int_{D_{s,t}}\left( \left\vert \nabla x_0 \right\vert_{g_{s,t}}^2 + \left\vert \nabla x_1 \right\vert_{g_{s,t}}^2 + \left\vert \nabla x_2 \right\vert_{g_{s,t}}^2 \right)dA_{g_{s,t}} = 2A(D_{s,t})
\end{split}
\end{equation*}
where $A(D_{s,t})$ is the area of the minimal surface. As explained below, we emphasize that $\Phi : \mathbb{D}\to \mathcal{E}_{\sigma_{s,t}}$ is an embedding and its geometric properties (symmetries, intersection with equatorial planes) are described in Claim \ref{claimsymmetry}.

\medskip

Let's explain the steps of proof for Theorem \ref{theomain}. It is not \textit{a priori} clear that the following properties hold true:
\begin{itemize}
\item[\textbf{(1)}] $F_{s,t}$ has a minimizer, and then there is a minimal (possibly branched) free boundary immersion $\Phi$ from $\mathbb{D}$ into a $n$-ellipsoid associated to this minimal metric.
\item[\textbf{(2)}] Up to rearrangement of coordinates, $\Phi$ has at most $3$ coordinates and the target of $\Phi$ is the convex hull of a $2$-ellipsoid $\mathcal{E}_{\sigma_{s,t}}$ for $\sigma_{s,t} = (\sigma_1(g_{s,t}),\sigma_2(g_{s,t}),\sigma_2(g_{s,t}))$.
\item[\textbf{(3)}] $D_{s,t} = \Phi\left(\mathbb{D}\right)$ is non-planar.
\item[\textbf{(4)}] $\Phi$ is an embedding.
\end{itemize}
Let's explain every difficulty separately:

\medskip

\textbf{(1)}: For the choice $t=0$, we know by Weinstock inequality \cite{weinstock}
\begin{equation} \label{eqW1} \bar{\sigma}_1 \leq 2\pi \end{equation}
and that $2\pi$ is only realized for the flat disk. Weinstock arguments also give that for $t=1$ and $s=1$, 
\begin{equation} \label{eqW2} \frac{1}{\bar{\sigma}_1} + \frac{1}{\bar{\sigma}_2} \geq \frac{1}{\pi} \end{equation}
and the unique minimizer is the flat disk. For $t = \infty$: the only eigenvalue which appears is $\bar{\sigma}_2$, then, by Hersch Payne and Schiffer \cite{HPS75}
\begin{equation} \label{eqW3} \bar{\sigma}_2 < 4\pi  \end{equation}
and it is not realized (see \cite{GP10}). $4\pi$ corresponds to the disjoint union of two flat disks of same boundary length. In particular, there are maximizing sequences approaching $4\pi$ which blow up and "bubble converge" to a disjoint union of two disks (see for instance Theorem \ref{theotestfunction} below). In \cite{Pet21}, we give an optimal assumption on the functional to ensure that bubbling cannot occur and then to ensure existence of a minimizer. In our context, setting $h_{s,t}(\sigma_1,\sigma_2) = \left(\sigma_1^{-s} + t \sigma_2^{-s}\right)^{\frac{1}{s}} $, the condition reads as
\begin{equation}\label{eqinfcond} \inf_g F_{s,t} < h_{s,t}( 0, 4\pi ). \end{equation}
This condition is automatic if $s>0$, but needs to be discussed if $s<0$. In section \ref{spectralgaps}, we prove the following
\begin{theo} \label{theotestfunction} There is a one parameter family of metrics $h_{\eps} = e^{2v_\eps} (dx^2 +dy^2 )$ such that 
$$ \bar{\sigma}_1(h_\eps) = \frac{2\pi}{\ln\left(\frac{1}{\eps}\right)} + O\left( \frac{1}{\ln\left(\frac{1}{\eps}\right)^2}\right)  \text{ and }  \bar{\sigma}_2(g_\eps) = 4\pi - 16\pi \eps + o(\eps)  $$
as $\eps \to 0$ and $e^{2v_\eps}$ satisfies the following symmetry properties
$$ \forall (x,y) \in \mathbb{D}, e^{2v_\eps}(x,y) = e^{2v_\eps}(-x,y) = e^{2v_\eps}(x,-y).$$
\end{theo}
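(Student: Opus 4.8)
The strategy is to exhibit an explicit family of conformal factors that concentrates the boundary measure of $\D$ at two antipodal boundary points, so that $(\D,h_\eps)$ degenerates onto a disjoint union of two flat unit disks. Recall that for a conformal metric $e^{2v}\xi$ on $\D$ the Dirichlet energy $\int_{\D}\vert\nabla u\vert^2$ is conformally invariant, so the Steklov eigenvalues and the boundary length depend only on the boundary density $\rho:=e^{v}\vert_{\mathbb{S}^1}$, through $\sigma_k(h)=\min\limits_{\dim V=k+1}\max\limits_{0\neq u\in V}\frac{\int_{\D}\vert\nabla u\vert^2}{\int_{\mathbb{S}^1}u^2\rho\,d\theta}$ and $L_h(\mathbb{S}^1)=\int_{\mathbb{S}^1}\rho\,d\theta$. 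One therefore prescribes only $\rho_\eps$ and takes for $v_\eps$ any smooth positive extension of $\ln\rho_\eps$ with the required symmetry (e.g.\ its harmonic extension), the interior values being irrelevant for every quantity in the statement. For $\rho_\eps$ one glues, near $z=1$ and near $z=-1$, two copies rescaled at scale $\eps$ of the standard boundary density of the flat unit disk, which in a half-plane chart centred at the gluing point is $x\mapsto \frac{2}{1+x^2}$ (total mass $2\pi$); the two profiles are symmetrized so that $\rho_\eps(e^{i\theta})=\rho_\eps(e^{-i\theta})=\rho_\eps(e^{i(\pi-\theta)})$, which, with a chart compatible with the two reflections of $\D$ fixing $\{1,-1\}$, is exactly the evenness $e^{2v_\eps}(x,y)=e^{2v_\eps}(-x,y)=e^{2v_\eps}(x,-y)$ required. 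With the natural normalization $L_\eps=4\pi$ exactly (otherwise $L_\eps=4\pi+O(\eps)$), and the limit as $\eps\to 0$ is the disjoint union of two flat unit disks, whose renormalized Steklov spectrum is $0,0,2\pi,2\pi,2\pi,2\pi,\dots$. By spectral continuity under this degeneration, $\bar\sigma_1(h_\eps)\to 0$ and $\bar\sigma_2(h_\eps)\to 4\pi$; moreover a standard ``nearly constant on each bubble'' argument (the single-bubble Steklov gap being $1$) shows that $\sigma_1$ is the only eigenvalue tending to $0$, hence $\sigma_2(h_\eps)\to 1$.

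For $\bar\sigma_1$ the relevant mode is antisymmetric under the bubble exchange: it is close to the condenser potential $\phi_\eps$ equal to $+1$ on the $\eps$-neighbourhood of $z=1$ and $-1$ on that of $z=-1$, with vanishing normal derivative in between. Computing the capacity of this condenser in the half-plane — by doubling across $\partial\D$ and comparing with $\ln\vert z-1\vert-\ln\vert z+1\vert$ — yields $\int_{\D}\vert\nabla\phi_\eps\vert^2=\frac{2\pi}{\ln(1/\eps)}\big(1+O(1/\ln(1/\eps))\big)$; since $\vert\phi_\eps\vert=1$ off a set carrying only an $O(1/\ln(1/\eps))$ fraction of the mass of $\rho_\eps$, one gets $\int_{\mathbb{S}^1}\phi_\eps^2\rho_\eps=4\pi\big(1+O(1/\ln(1/\eps))\big)$, while $\int_{\mathbb{S}^1}\phi_\eps\rho_\eps=0$ by symmetry; testing the Rayleigh quotient gives the upper bound $\bar\sigma_1(h_\eps)\le\frac{2\pi}{\ln(1/\eps)}+O(1/\ln(1/\eps)^2)$. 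For the matching lower bound, if $u$ realizes $\sigma_1$, write $\bar u_\pm$ for the $\rho_\eps$-average of $u$ near $z=\pm 1$; the constraint $\int u\rho_\eps=0$ forces $\bar u_-=-\bar u_+(1+o(1))$, the single-bubble spectral gap gives $\int_{\mathbb{S}^1}u^2\rho_\eps= 2\pi(\bar u_+^2+\bar u_-^2)+O\big(\int_{\D}\vert\nabla u\vert^2\big)$, and the condenser estimate gives $\int_{\D}\vert\nabla u\vert^2\ge\frac{\pi}{2\ln(1/\eps)}\vert\bar u_+-\bar u_-\vert^2(1-O(1/\ln(1/\eps)))$; combining these (all to relative precision $O(1/\ln(1/\eps))$) yields $\bar\sigma_1(h_\eps)\ge\frac{2\pi}{\ln(1/\eps)}\big(1-O(1/\ln(1/\eps))\big)$, which is the first estimate.

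For $\bar\sigma_2$ one first describes the cluster of eigenvalues accumulating at $1$: each rescaled bubble problem has a two-dimensional eigenspace for the eigenvalue $1$ spanned by the images of the coordinate functions, so four eigenvalues of $h_\eps$ converge to $1$, split by the symmetries, the fully symmetric one being carried by $c_1+c_2+1$, where $c_i$ is the ``$\cos$'' bubble eigenfunction near $z=\pm1$ (whose far field is $-1$, so that no condenser-type transition occurs in the middle and the corrections to this mode have size $\eps$ rather than $1/\ln(1/\eps)$). Checking that this cluster lies above $\sigma_1$, one gets $\sigma_2(h_\eps)=1+O(\eps)$, and the precise coefficient follows from an eigenvalue-perturbation / matched-asymptotics computation: one expands the Rayleigh quotient of this mode to order $\eps$, keeping the contributions of the slowly decaying bubble tails (of physical size $\eps$) to both numerator and denominator, together with the expansion of $L_\eps$, obtaining $\bar\sigma_2(h_\eps)=\sigma_2(h_\eps)L_\eps=4\pi-16\pi\eps+o(\eps)$; one also checks that the other three modes of the cluster have Rayleigh quotient at least this large, so that $\sigma_2$ is indeed realized by the computed mode (consistently with \eqref{eqW3}, the sign of the correction being negative).

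The main obstacle is this last expansion. The $\bar\sigma_1$ estimate requires only leading-order precision, permitted by the $O(1/\ln(1/\eps)^2)$ error, and is handled by crude condenser comparisons; but isolating the coefficient $16\pi$ and the $o(\eps)$ remainder demands a genuine second-order perturbative analysis near a \emph{degenerate, non-compact} maximizing configuration — two Steklov bubbles glued at points — where naive first-order perturbation theory must be supplemented by careful control of the interaction terms and of the bubble tails in both integrals of the Rayleigh quotient, and by the comparison among the four cluster modes. A minor but necessary point is to fix the half-plane chart so that the evenness of $e^{2v_\eps}$ in $x$ and in $y$ on all of $\D$ follows from the manifest symmetry of $\rho_\eps$, and to choose the interior extension of $v_\eps$ smooth and positive (harmless, as it does not enter the Steklov data).
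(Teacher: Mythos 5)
Your construction and your treatment of $\bar\sigma_1$ follow essentially the same route as the paper (two unit-disk boundary densities concentrated at $\pm 1$ with total length $4\pi$, a logarithmic cutoff test function for the upper bound, a two-point capacity/average argument for the lower bound), and that part is fine in outline, although the Poincar\'e-type step $\int_{\mathbb{S}^1}u^2\rho_\eps=2\pi(\bar u_+^2+\bar u_-^2)+O(\int_{\D}|\nabla u|^2)$ is asserted rather than proved (the paper obtains it by analysing the radial means of the first eigenfunction). The genuine gap is in the $\bar\sigma_2$ estimate, which is the heart of the theorem. You attribute the order-$\eps$ shift to the exchange-symmetric ``cosine'' mode $c_1+c_2+1$. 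But in the half-plane chart the boundary trace of the far bubble's cosine profile is $Z_1(\eps x)\vert_{x_2=0}=\frac{1-\eps^2x_1^2}{1+\eps^2x_1^2}=1-2\eps^2x_1^2+\dots$, i.e.\ it deviates from its constant far field only at order $\eps^2$ near the opposite bubble; correspondingly the paper proves (subsection \ref{subsectionsymmetriccase}) that an eigenfunction of this symmetry type has $\sigma-1=o(\eps)$, so this mode carries \emph{no} first-order correction. The $-4\eps$ shift is carried by the modes that are odd along the boundary within each bubble (the $Z_2$ ``sine'' modes): the tail of $Z_2(\eps x)$ is $\approx 2\eps x_1$ near the other bubble, odd in $x_1$, and couples at first order with the local sine bubble; the paper's antisymmetric analysis (subsection \ref{subsectionantisymmetriccase}) yields exactly $\delta_\star=-\bigl(2\int_{\mathbb{S}^1}\tfrac{z_2^2}{|1+z|^2}d\theta\bigr)\bigl(\int_{\mathbb{S}^1}z_2^2\,d\theta\bigr)^{-1}e_\star=-4e_\star$, hence $\sigma_{\eps,2}=1-4\eps+o(\eps)$ and $\bar\sigma_2=4\pi-16\pi\eps+o(\eps)$, the cosine case being excluded by contradiction. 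So your claim that the other three cluster modes have Rayleigh quotient at least as large is backwards, and expanding the Rayleigh quotient of your chosen mode would only give $4\pi+o(\eps)$, not the stated coefficient.

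Beyond the misidentification of the mode, the step you yourself flag as ``the main obstacle'' --- isolating the coefficient $16\pi$ with an $o(\eps)$ remainder and comparing the four cluster modes --- is precisely what the proposal does not carry out, and it constitutes most of Section \ref{spectralgaps} of the paper: one subtracts the explicit approximate eigenfunction ($Z_2(x/\eps)+Z_2(\eps x)$ in the chart), proves uniform control of the remainder by elliptic estimates and blow-up at the scales $\eps$, $r_\eps$ and $1$, passes to a limit problem on the disk and integrates it against $z_2$ to extract the constant $-4$, and separately proves the $o(\eps)$ bound in the symmetric sector so as to rule it out. Without this analysis, and with the wrong mode selected, the second estimate of the theorem is not established.
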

This theorem ensures that a minimum also exists for $F_{s,t}$ with $s<0$ and $t >0$.

\medskip

\textbf{(2)}: The coordinate functions of the minimal free boundary immersion $\Phi : \mathbb{D} \to \R^{n+1}$ into a $n$-ellipsoid are associated to first and second Steklov eigenvalues with respect to the minimal metric for $F_{s,t}$. Up to rearrangement of coordinates, we can assume that they are independent eigenfunctions. Knowing by Jammes \cite{jammes}, that the multiplicity of $\sigma_1$ and $\sigma_2$ on the disk is less than $2$, we obtain that $n+1=2$ or $n+1=3$. In all the cases, the target surface is the convex hull of $\mathcal{E}_{\sigma_{s,t}}$ with $\sigma_{s,t} = (\bar{\sigma}_1(g_{s,t}),\bar{\sigma}_2(g_{s,t}),\bar{\sigma}_2(g_{s,t}))$ where if $n+1=2$, we set by convention $\phi_3 = 0$ and the minimal immersion is planar.

\medskip

\textbf{(3)}: As noticed in \eqref{eqW1} and \eqref{eqW2}, for $t=0$ or for $s=1$ and $t \leq 1$ the only minimizers are flat disks, so that we do not build non planar minimal disks into ellipsoids in these cases. Then, we have to choose carefully the parameters $s$ and $t$ that ensure that minimizers cannot be planar ellipses. In the current paper (see Proposition \ref{propdiffeodim2}), we prove that any planar immersed disk into the convex hull of an ellipse has to be a diffeomorphism. Then if $\Phi\left( \mathbb{D} \right)$ is planar, there is only one candidate and the associated critical metric $g_c$ satisfies
$$ \bar{\sigma}_1(g_c) = 2\pi \sqrt{\frac{p}{q}} \text{ and } \bar{\sigma}_2(g_c) = 2\pi \sqrt{\frac{q}{p}} $$
where up to a dilatation, $(\mathbb{D},g_{c})$ is isometric to $E_{p,q} = \{ (x,y)\in \mathbb{R}^2 ; p x^2 + q y^2 = 1 \}$ endowed with a metric $e^{2v} (dx^2+dy^2)$ such that the conformal factor satisfies
$$\forall (x,y)\in \partial E_{p,q}, e^v(x,y) = \left( p^2 x^2 + q^2 y^2 \right)^{-\frac{1}{2}}.$$
Notice that if $\sqrt{\frac{q}{p}} \geq 2$, then by \ref{eqW3}, $\bar{\sigma}_2$ cannot correspond to a second eigenvalue and we obtain a contradiction. We denote $\theta_{\star}$ the minimal value such that for any $\theta:= \frac{q}{p} > \theta_{\star}$, $q$ is not a second Steklov eigenvalue of $E_{p,q}$: we have $\theta_{\star} \leq 4$.

If in addition, $\Phi\left(\mathbb{D}\right)$ is not a flat disk (that is $p<q$), we have an extra mass condition given in \cite{Pet21} (see also \cite{PT23} and Proposition \ref{propcritical}) coming from the choice of the combination $h_{s,t}$ in the variational problem $F_{s,t}$, written as
$$ \frac{p}{q} = \frac{\partial_2 h_{s,t}(p,q)}{\partial_1 h_{s,t}(p,q)} $$
so that $p^{-s} = t q^{-s}$  and 
$$ \bar{\sigma}_1(g_c) = 2\pi t^{-\frac{1}{2s}} \text{ and } \bar{\sigma}_2(g_c) = 2\pi t^{\frac{1}{2s}}$$
It implies that if $s > 0$ then $1 < t \leq \theta_{\star}^s < 4^s$ and that if $s<0$ then $4^s < \theta_{\star}^s \leq t<1$. 

Moreover, we can compare the value of the functional on the flat disk and on $E_{p,q}$
$$ h_{s,t}(2\pi t^{-\frac{1}{2s}}, 2\pi t^{\frac{1}{2s}}) = \frac{\left( 2 \sqrt{t} \right)^{\frac{1}{s}}}{2\pi}  \text{ and } h_{s,t}(2\pi, 2\pi) =  \frac{(1+t)^{\frac{1}{s}}}{2\pi} $$
so that if $s>0$ and $t>1$, then the flat disk $E_{1,1}$ is never a minimizer and if $s<0$, then $E_{p,q}$ for $p<q$ is never a minimizer.

Finally, if $s<0$ and if $\Phi\left(\mathbb{D}\right)$ is a flat disk, we obtain that
$$ \frac{(1+t)^{\frac{1}{s}}}{2\pi} = h_{s,t}(2\pi,2\pi) = \inf_g F_{s,t} < h_{s,t}(0,4\pi) = \frac{t^{\frac{1}{s}}}{4\pi} $$
so that $t < \frac{2^s}{1- 2^s}$.

As a conclusion, we obtain that if 
$$ s>0 \text{ and } t > \theta_{\star}^s \text{ or } s<0 \text{ and } t \geq \frac{1}{2^{-s}- 1} $$
then $\Phi\left( \mathbb{D} \right)$ has to be non planar. 

\medskip

\textbf{(4)}: While we easily prove that planar possibly branched immersed free boundary minimal disks into an ellipsoid by first and second eigenfunctions have to be embedded (see Proposition \ref{propdiffeodim2}), the non-planar case is less obvious. In the current paper, we prove it under symmetry assumptions, 
\begin{theo} \label{theoembedded}
Let $\Phi : \mathbb{D} \to co\left(\mathcal{E}_{\sigma}\right)$ be a non-planar possibly branched free boundary minimal immersion such that $\sigma = (\sigma_1,\sigma_2,\sigma_2)$ only contains first and second Steklov eigenvalues with respect to the critical weight on $\mathbb{S}^1$
$$e^v := \frac{\left\vert \Phi_{\theta} \right\vert}{\left(\sigma_1^2 \phi_0^2 + \sigma_2^2\left(\phi_1^2 + \phi_2^2\right)\right)^{\frac{1}{2}}} = \frac{\left\vert \Phi_{r} \right\vert}{\left(\sigma_1^2 \phi_0^2 + \sigma_2^2\left(\phi_1^2 + \phi_2^2\right)\right)^{\frac{1}{2}}}$$
then $\Phi$ does not have any branched point. Moreover, if
$$ \forall (x,y) \in \mathbb{S}^1, e^{v}(x,y) = e^{v}(-x,y) = e^{v}(x,-y),$$
then $\Phi$ is an embedding.
\end{theo}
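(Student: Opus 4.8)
The plan is to exploit the structure of $\Phi$ as a harmonic map whose coordinates are low Steklov eigenfunctions, together with the nodal and multiplicity bounds on the disk. First I would show $\Phi$ has no branch points. A branch point of a branched minimal immersion forces a local behaviour $\Phi(z) - \Phi(z_0) = \mathrm{Re}(a z^k) + o(|z|^k)$ with $k \geq 2$ (interior) or a half-integer analogue at the boundary. Since $\Phi$ is free boundary harmonic, each coordinate $\phi_i$ is a Steklov eigenfunction for $e^v$ on $\mathbb{S}^1$, and the key point is to count nodal domains: a branch point of order $k \geq 2$ creates at least $2k \geq 4$ nodal domains for a suitable linear combination of the coordinates (the real part of $a z^k$ changes sign $2k$ times around $z_0$). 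But a first or second Steklov eigenfunction on the disk has at most two nodal domains — this is the standard Courant-type nodal bound for Steklov eigenfunctions, combined with the fact that $\sigma_1$ and $\sigma_2$ have multiplicity at most $2$ on the disk (Jammes \cite{jammes}, already invoked in step \textbf{(2)}). Any nontrivial linear combination of $\phi_0, \phi_1, \phi_2$ lies in the span of eigenfunctions with eigenvalue $\leq \sigma_2$, hence has at most... one must be careful here: the span could be $3$-dimensional, so a generic combination could a priori be a "third" eigenfunction with up to three nodal domains. I would handle this by choosing the linear combination to vanish at $z_0$ to high order: requiring it to kill the $\mathrm{Re}(az^k)$ term with $k\geq 2$ imposes only finitely many linear conditions, and among the (at least $2$-dimensional, since $\Phi$ is non-planar the coordinates are genuinely $3$ independent functions) space of combinations one can still arrange a sign-change count exceeding what Courant's bound allows for an eigenfunction with eigenvalue $\leq \sigma_2 < \sigma_3$. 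This contradiction rules out interior branch points; the boundary case is analogous using the free boundary condition $\partial_r\Phi \perp T\mathcal{E}_\sigma$, which forces the branching order at the boundary to interact with the Steklov eigenvalue equation, again producing too many boundary sign changes.

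Next, having established that $\Phi$ is an immersion, I would prove embeddedness under the symmetry hypothesis $e^v(x,y) = e^v(-x,y) = e^v(x,-y)$. The symmetry of the weight forces, by uniqueness of the eigenspaces (up to the multiplicity-$2$ ambiguity, which I would fix by a suitable normalization of the coordinate frame in $\mathbb{R}^3$), that $\phi_0$ is even in both $x$ and $y$, while $\phi_1$ is odd in $x$ and even in $y$, and $\phi_2$ is even in $x$ and odd in $y$ — this matches the parity of the corresponding coordinates on the model planar ellipse and is consistent with $\phi_1, \phi_2$ spanning the $\sigma_2$-eigenspace and $\phi_0$ being the $\sigma_1$-eigenfunction. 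Thus $\Phi$ is equivariant under the Klein four-group $\Z/2 \times \Z/2$ acting on $\mathbb{D}$ by $(x,y)\mapsto(\pm x,\pm y)$ and on $\R^3$ by the corresponding sign changes on $(x_1,x_2)$ (with $x_0$ fixed). The strategy is then the classical one for equivariant minimal surfaces: first show the fundamental domain $Q = \mathbb{D}\cap\{x\geq 0, y\geq 0\}$ maps injectively into the corresponding octant, then show the pieces fit together without overlap across the symmetry axes. On $Q$, the boundary $\partial Q$ consists of an arc of $\mathbb{S}^1$ and two segments on $\{x=0\}$ and $\{y=0\}$; along $\{x=0\}$ we have $\phi_1 = 0$ so the image lies in the coordinate plane $\{x_1 = 0\}$, and along $\{y=0\}$, $\phi_2 = 0$ so the image lies in $\{x_2=0\}$. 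One shows these boundary curves are embedded and the image of $\partial Q$ bounds a region in which $\Phi|_Q$ is a proper degree-one map, hence injective — here the orthogonality (free boundary) condition and the fact that $\Phi$ meets each symmetry plane orthogonally along the corresponding segment are used to prove the boundary curve is a simple arc. Then reflections reconstruct $\Phi(\mathbb{D})$ as a union of four embedded pieces meeting cleanly along their common boundary arcs.

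I expect the main obstacle to be the no-branch-point argument, specifically making the nodal-domain count rigorous in the presence of the multiplicity-$2$ ambiguity and the $3$-dimensional coordinate span. The naive statement "a $\sigma_{\leq 2}$-eigenfunction has $\leq 2$ nodal domains" is false for a genuine third eigenfunction, so the argument must instead use that $\Phi$ being a conformal harmonic map with a branch point produces a combination that is simultaneously an eigenfunction for a single eigenvalue $\leq \sigma_2$ and has $\geq 4$ sign changes; one needs the sharper fact that an eigenfunction of $\sigma_1$ or $\sigma_2$ on the disk has at most $2$ nodal domains (Courant plus Jammes), and that the branching combination can be forced into one of these two eigenspaces by the conformality relation $\sum_i (\partial_z \phi_i)^2 = 0$ — this is the step that requires care, since it is what distinguishes "a linear combination of the three coordinates" from "an honest eigenfunction." A secondary difficulty is showing embeddedness of the boundary curve $\Phi(\partial Q)$ in step two: injectivity of a harmonic map on a planar domain is delicate, and I would rely on the fact that $\Phi|_{\partial Q}$ decomposes into a half-harmonic-type curve on $\mathbb{S}^1 \cap Q$ (controlled by the eigenvalue ordering, which prevents oscillation) and two planar geodesic arcs in the symmetry planes, using a winding-number or Rado-type argument to conclude global injectivity on $Q$ from injectivity on $\partial Q$.
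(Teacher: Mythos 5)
Your no-branch-point argument stalls exactly where you flag it, and the missing idea is not the conformality relation but a simpler reduction. A linear combination involving all three coordinates is not an eigenfunction at all when $\sigma_1<\sigma_2$, so no Courant-type bound applies to it, and killing the branching term by linear conditions does not repair this. The paper's resolution: at an interior branch point $z$ every gradient vanishes, so it suffices to look at the projection $\eta=(\phi_1,\phi_2)$ onto the two $\sigma_2$-coordinates; choosing $v\perp\eta(z)$, the function $\langle v,\eta\rangle$ is a genuine second eigenfunction vanishing at $z$ with vanishing gradient there, which is impossible because the nodal set of a first or second Steklov eigenfunction on the disk has no interior singular point (a singular point would force at least four nodal domains, against Courant, since nodal sets of these harmonic eigenfunctions contain no closed loops). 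Also the boundary case is not ``analogous'': boundary branch points are excluded by a different mechanism, namely that $f=\langle\sigma\Phi,\Phi\rangle$ is subharmonic and equals $1$ on $\mathbb{S}^1$, so for each boundary point the harmonic function $y\mapsto\langle\sigma\Phi(x),\Phi(y)\rangle$ attains its maximum at $x$ and the Hopf lemma gives $\Phi\cdot\partial_\nu\Phi(x)=e^{u(x)}\neq0$; no nodal counting is involved.

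For embeddedness your parity table is wrong and the error propagates. An eigenfunction even in both variables and orthogonal to constants has at least three nodal domains, so the first eigenfunction $\phi_0$ cannot be even-even; after normalisation the correct symmetries are $\phi_0$ odd in $y$ and even in $x$, $\phi_1$ odd in $x$ and even in $y$, and $\phi_2$ even in both, with three nodal domains and nonvanishing on the axes. Hence the surface is not invariant under reflections fixing the $x_0$-coordinate, the quarter disk does not map into an octant, and the two segments of $\partial Q$ do not land in $\{x_1=0\}$ and $\{x_2=0\}$ as you claim. Moreover, even with the correct symmetries, your concluding step --- injectivity of the boundary curve plus a degree argument gives injectivity of the surface piece --- does not follow for a map into $\mathbb{R}^3$; that implication is precisely the difficulty. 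The paper instead projects to $\eta=(\phi_1,\phi_2)$ and works on the half disk $\mathbb{D}_+$: it shows $\eta$ restricted to $\partial(\mathbb{D}_+)$ parametrizes a simple curve enclosing a star-shaped domain, proves $\eta_x\wedge\eta_y\neq0$ along the boundary via the second fundamental form, the Morse lemma and the nodal structure of $\langle\eta,\tilde n(z)\rangle$, then perturbs to $\eta_\varepsilon=(\eta+\alpha_\varepsilon\eta_y)/(1+\varepsilon\phi_0)$, which solves $-\mathrm{div}(\sigma_\varepsilon\nabla\eta_\varepsilon)=0$ with Jordan boundary data, and invokes the Alessandrini--Nesi theorem on $\sigma$-harmonic mappings before passing to the limit $\varepsilon\to0$. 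Nothing in your sketch supplies a substitute for this two-dimensional Rad\'o--Kneser--Choquet-type input, so the embedding part remains unproved as written.
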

Theorem \ref{theoembedded} is a similar result as the embeddedness of (possibly branched) minimal free boundary immersions into balls \textit{by first Steklov eigenfunctions} for surfaces with boundary of \textit{genus $0$} \cite{FS16} but the proof of Theorem \ref{theoembedded} needs much more refinement (see Section \ref{sectionembedding}). We do no know if it is possible to remove the symmetry assumptions on $e^{v}$ and get the same conclusion, or at least the weaker conclusion: $\Phi : \mathbb{S}^1 \to \mathcal{E}_{\sigma} $ parametrizes a simple curve.

At that stage, we do not \textit{a priori} know if a minimizer of $F_{s,t}$ has symmetries. In order to obtain Theorem \ref{theomain}, we then perform a variational method on combinations of eigenvalues under symmetry constraints on the metrics (see Section \ref{sectionoptimizationsymmetries}). Thanks to the symmetry properties of the test functions of Theorem \ref{theotestfunction}, condition \eqref{eqinfcond} is still realized for the infimum among symmetric metrics. Then again, no blow up happens on symmetric minimizing sequences of $F_{s,t}$ for $t \geq t_{\star}(s)$. This leads to the existence of the expected embedded non planar free boundary minimal disk into some ellipsoid. By other simple arguments, we deduce the complete proof of Theorem \ref{theomain} (see Section \ref{sectionotherarguments})

\bigskip

The current paper is fairly self-contained up to the theory of convergence of minimizing Palais-Smale sequences developped in \cite{Pet22}, and all the steps of our construction can also be performed in the analogous closed case (see \cite{Pet23}).
In the end of Section \ref{sectionplanarellipse}, we also suggest ways to make the construction more explicit in the same spirit as the bifurcation methods in the closed case \cite{BP22}. New branches of embedded free boundary minimal disks should appear as soon as the parameter of the ellipsoid crosses values having multiple eigenvalues for a Steklov problem on ellipses endowed with a metric conformal to the flat one. Such a bifurcation method would even give an arbitrary number of embedded disks for sufficiently elongated ellipsoids.

\section{Properties of the free boundary minimal disks into an ellipsoid}
\subsection{Free boundary minimal surfaces into ellipsoids and Steklov eigenvalues}
We start with the following remark: let $\mathcal{E} \subset \mathbb{R}^n$ be an ellipsoid of parameters $\sigma = diag\left(\sigma_1,\cdots,\sigma_n\right)$, with $\sigma_i >0$, defined by
$$ \mathcal{E} = \{ (x_1,\cdots,x_n) \in \mathcal{E} ; \sigma_1 x_1^2 + \cdots + \sigma_n x_n^2 = 1 \} \hskip.1cm,$$
endowed with the induced metric of the Euclidean metric $\xi$. 
We know that $x$ is a harmonic map. We compute the outward normal derivative of $x$ on $\mathcal{E}$:
$$ \partial_{\nu} x = \nu  \hskip.1cm, $$
where the outward normal of the ellipsoid is denoted by $ \nu = \frac{\sigma x}{\left\vert \sigma x \right\vert}$ where $\left\vert \sigma x \right\vert = \left( \sum_{i=1}^n \sigma_i^2 x_i^2\right)^{\frac{1}{2}} $. Therefore, if we endow $\mathcal{E}$ with the metric $g = \frac{\xi}{ \left\vert \sigma x \right\vert  } $, $x_i$ is a Steklov eigenfunction on $(\mathcal{E},g)$ associated to the eigenvalue $\sigma_i$.

\medskip

Now, let $\Phi : (\Sigma,h) \to \mathbb{R}^n$ be such that $\Phi(\partial\Sigma) \in \mathcal{E}$, a $n-1$ dimentional ellipsoid of parameter $\sigma = \left(\sigma_1,\cdots,\sigma_n\right)$. 
A well-known characterisation of $\Phi : (\Sigma,h) \to \mathbb{R}^n$ to be minimal with free boundary in $\mathcal{E}$ is free boundary harmonicity in $\mathcal{E}$ and conformality. We recall that $\Phi$ is harmonic in $\mathcal{E}$ with free boundary if it is a critical point of the energy
$$ E(\Phi) = \frac{1}{2} \int_{\Sigma}\left\vert \nabla \Phi \right\vert_h^2 dA_h $$
under the constraint $\Phi(\partial\Sigma) \subset \mathcal{E}$. The Euler-Lagrange characterization is
$$ \Delta_h \Phi = 0 \hbox{ in } \Sigma \hbox{ and } \partial_{\nu}\Phi \in \left(T_{\Phi}\mathcal{E}\right)^{\perp} \hbox{ on } \partial\Sigma$$
Then $\partial_\nu \Phi = f \nu$ for some function $f = \Phi. \partial_{\nu} \Phi$.
Conformality is characterized by the vanishing of 
$$0 = \left\vert\nabla \Phi \right\vert_h^2 \frac{h}{2} - d\Phi \otimes d\Phi := \sum_{i=1}^n \left(\left\vert\nabla \Phi_i \right\vert_h^2 \frac{h}{2} - d\Phi_i \otimes d\Phi_i \right) \hskip.1cm.$$
For a smooth positive function $e^{2u}$, such that $g = e^{2u} h$, we have
$$ \Delta_g f = e^{-2u} \Delta_h f  \hbox{ and } \partial_{\nu_g} f = e^{-u} \partial_{\nu_h} f \hskip.1cm, $$
and if $\Phi : (\Sigma,h) \to \mathbb{R}^n$ is a minimal isometric immersion with free boundary in $\mathcal{E}$, setting $g = e^{2u} h$ for any function $u$ extending the following formula on the boundary,
$$  e^{u} = \Phi. \partial_{\nu} \Phi = \frac{ 1}{\left\vert \sigma\Phi \right\vert} \hbox{ on } \partial\Sigma \hskip.1cm,$$
the coordinates of $\Phi$ are Steklov eigenfunctions on $(\Sigma,g)$ with eigenvalues $\sigma_1,\cdots,\sigma_n$.

\subsection{The case of planar ellipses} \label{sectionplanarellipse}
Let's consider the simplest example: we assume that $\Phi = id : \mathcal{E}_p \to \mathcal{E}_p$ into a $2$-d ellipse $\mathcal{E}_p$ of parameters $p = (p_1,p_2)$. Setting the metric conformal to $\xi$
$$ g_p = e^{2u} \xi \hbox{ where } e^{u} = \frac{1}{\left\vert \sigma x \right\vert}  =  \frac{1 }{\sqrt{p_1^2 x_1^2 + p_2^2 x_2^2}  } \hbox{ on } \mathcal{E}_p \hskip.1cm,$$
on $(\mathcal{E}_p,g_p)$ we then have 
$$ \Delta_{g_p} x_i = 0 \hbox{ and } \partial_{\nu_p}x_i = p_i x_i $$
for all $i=1,2$. This means that the coordinate functions are eigenfunctions on $(\mathcal{E}_p,g_p)$ with eigenvalues $p_1,p_2$. However, these are not necessarily eigenfunctions associated to the first and second eigenvalue of $(\mathcal{E}_p,g_p)$. Let's compute their renormalized eigenvalue.
By invariance of the indices of eigenvalues by diltation of the ellipse, we study
$\mathcal{E}_p = \{ px^2+y^2 = 1 \}$, for some $0 \leq p \leq 1$, 
where the boundary is endowed with the conformal factor $e^{u_p(x)} = \left(p^2 x^2 + y^2\right)^{-\frac{1}{2}}$. 
We know that there are $k_1 \leq k_2$ such that $\sigma_{k_1}(\mathcal{E}_p,e^{u_p}) = p$ and
$\sigma_{k_2}(\mathcal{E}_p,e^{u_p}) = 1$. 
By parametrisation of the ellipse on the circle $(\frac{1}{\sqrt{p}}\cos\theta,\sin\theta)$, we have that 
$$dL_{\mathcal{E}_p} = \sqrt{\frac{1}{p}\sin^2\theta+\cos^2\theta}d\theta = \frac{1}{\sqrt{p}}e^{-u}d\theta$$
We obtain that the total length of the boundary is
$$ L_{\mathcal{E}_p}(e^{u_p}dL_{\mathcal{E}_p}) = \int_{\mathcal{E}_p} e^{u_p} dL_{\mathcal{E}_p} = \int_{0}^{2\pi} \frac{d\theta}{\sqrt{p}} = \frac{2\pi}{\sqrt{p}} $$
and the renormalized eigenvalues are
$$\bar{\sigma}_{k_1}(\mathcal{E}_p,e^{u_p}) = 
p \sigma_{k_1}(\mathcal{E}_p,e^{u_p}) = p L = 2\pi \sqrt{p} \text{ and } \bar{\sigma}_{k_2}(\mathcal{E}_p,e^{u_p}) = L  = \frac{2\pi}{ \sqrt{p}} $$
Then, for degenerating ellipses, we have that 
$$\bar{\sigma}_{k_1}(\mathcal{E}_p,e^{u_p}) \to 0 \text{ and } \bar{\sigma}_{k_2}(\mathcal{E}_p,e^{u_p}) \to +\infty \hbox{ as } p \to 0 
\hskip.1cm. $$
We know that $\bar{\sigma}_{k}(\mathcal{E}_p,e^{u_p})$ has to be bounded by $2\pi k$ for any $k$. Since $\sigma_{k_2}(\mathcal{E}_p,g_p) = 1$, we have that $k_2 \to + \infty$ as $p \to 0$. It would be interesting to know the value of 
$$ k_{1}(p) = \inf \{ k\in \mathbb{N} ; \sigma_k(\mathcal{E}_p,g_{p}) = p \} \text{ and } k_{2}(p) = \inf \{ k\in \mathbb{N} ; \sigma_k(\mathcal{E}_p,g_{p}) = 1 \} $$
for $i=1,2$ on the Riemannian manifold $(\mathcal{E}_p,g_p)$ defined above. For $p=1$, $k_i(p)= 1$ since the ellipse is a circle. The points $p$ such that $k_i$ jumps are points of bifurcation of eigenvalues. Computing precisely the set $\{ p\in \mathbb{R}^2\setminus\{0\} ; \forall i \in\{1,2\}, k_i(p) \leq 2 \}$ would give the minimal value $\theta_\star$ such that if $\frac{1}{p} > \theta_{\star}$, $k_2(p)\geq 3$. More generally, we conjecture that $k_1(p) = 1$ for any $0\leq p \leq 1$ and that a new bifurcation branch of non planar free boundary minimal surfaces is created if $\frac{1}{p}$ crosses
$$ \theta_\star^k := \inf\left\{ \frac{1}{p} ; 0 \leq p \leq 1 \text{ and } k_2(p) = k \right\} $$
as $p$ decreases.

\subsection{Minimal immersions into ellipsoids do not have branch points at the boundary}

Let $\Phi : \Sigma \to \mathbb{R}^n $ be some (possibly branched) conformal minimal immersion into an ellipsoid
$$ \mathcal{E} = \{ (x_1,\cdots,x_n) \in \mathcal{E} ; \sigma_1 x_1^2 + \cdots + \sigma_n x_n^2 = 1 \} \hskip.1cm,$$
parametrized by $\sigma = diag(\sigma_1,\cdots,\sigma_n)$. We let $ e^{2u} \left(dx^2 + dy^2\right) = \Phi^{\star}eucl$ be the pull-back metric of the Euclidean one by $\Phi$. Notice that branched points correspond to singularities $u(x)=-\infty$. We know that the coordinate functions of $\Phi$ are Steklov eigenfunctions with respect to the conformal facotor $e^{v} = \frac{e^u}{\left\vert \sigma \Phi \right\vert} $ at the boundary. We first prove the following

\begin{cl} \label{clnobranchedpointsboundary}
$\Phi$ cannot have any branched points at the boundary. In other words $e^{u}$ and $e^{v}$ are positive everywhere.
\end{cl}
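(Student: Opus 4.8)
The plan is to exploit the structure that at a boundary branch point the conformal factor $e^u$ of the pullback metric vanishes, while the coordinate functions $\Phi_i$ are Steklov eigenfunctions for the rescaled weight $e^v = e^u/\lvert\sigma\Phi\rvert$, so that the Steklov condition $\partial_\nu\Phi_i = \sigma_i e^v \Phi_i$ degenerates there. First I would set up local coordinates near a boundary point $p\in\partial\Sigma$, writing $\Phi$ as a harmonic map on a half-disk with real-analytic boundary data (harmonic functions with a free-boundary/Steklov condition are real-analytic up to the boundary away from where the weight degenerates, but here one must argue more carefully precisely because the weight may degenerate at $p$). The key is that $\Phi$ is harmonic and conformal, so the Hopf differential $\partial_z\Phi\cdot\partial_z\Phi$ is holomorphic; near $p$ the free-boundary condition $\partial_\nu\Phi\parallel\nu_{\mathcal E}$ forces the Hopf differential to be real on the boundary arc, hence it extends by Schwarz reflection to a holomorphic function $\psi$ on a full disk, and $e^{2u} = 2\lvert\psi\rvert$ locally, so $u(x)=-\infty$ exactly at the zeros of $\psi$, which are isolated and of finite order. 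So a boundary branch point is a zero of order $k\ge 1$ of $\psi$ sitting on the reflection axis.

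Next I would extract the leading-order asymptotics: near such a zero, in a suitable conformal coordinate $w$ (obtained by integrating a square root of $\psi$, which is possible after the reflection), the map $\Phi$ has an expansion $\Phi(w) = \Phi(p) + \mathrm{Re}\big(a\, w^{k+1}\big) + o(\lvert w\rvert^{k+1})$ for some nonzero vector $a\in\mathbb C^n$ with $a\cdot a = 0$ (conformality). The free-boundary condition then says that along the boundary arc $\mathrm{Re}(a\,w^{k+1})$, after subtracting $\Phi(p)$, must be normal to $\mathcal E$ in the sense dictated by the Steklov equation. The crucial computation is to evaluate the Steklov relation: on $\partial\Sigma$, $\partial_\nu\Phi_i = \sigma_i e^v\Phi_i$ with $e^v = e^u/\lvert\sigma\Phi\rvert$; multiplying by $\sigma_i\Phi_i$ and summing, $\sum_i \sigma_i \Phi_i\,\partial_\nu\Phi_i = e^u\,\lvert\sigma\Phi\rvert$, which (using $\sum\sigma_i\Phi_i^2=1$ on the boundary, i.e. $\Phi(\partial\Sigma)\subset\mathcal E$) forces $e^u = \lvert\sigma\Phi\rvert^{-1}\,(\ldots)$ — I'd track the orders in $\lvert w\rvert$: the left side, being a product of $\Phi_i$ (order $0$, equal to $\Phi_i(p)$) with $\partial_\nu\Phi_i$ (order $k$), has a definite behavior, while $e^u$ is order $k$ from $e^{2u}=2\lvert\psi\rvert\sim\lvert w\rvert^{2k}$; comparing the two sides and using that $\Phi(p)\in\mathcal E$ is a fixed point with $\lvert\sigma\Phi(p)\rvert\neq 0$, one derives an overdetermined relation on the vector $a$ and the direction $\sigma\Phi(p)$ that cannot hold unless $k=0$, i.e. there is no branch point.

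The main obstacle I expect is making the reflection/regularity argument rigorous when the weight $e^v$ itself degenerates at $p$: one cannot immediately invoke boundary regularity for the Steklov problem there. I would handle this by working instead with the harmonicity plus conformality of $\Phi$ directly (which do not see the weight), getting the holomorphic Hopf differential and the Schwarz reflection of $\Phi$ across the geodesic boundary arc — here one uses that $\partial\Sigma$ maps into $\mathcal E$ and $\Phi$ meets $\mathcal E$ orthogonally, so $\Phi$ reflects to a harmonic map into $\mathbb R^n$ (reflecting the target across the affine tangent hyperplane is not quite an isometry of $\mathcal E$, but orthogonality is exactly the condition that makes the reflected map $C^1$ and weakly harmonic, hence harmonic and real-analytic). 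Once $\Phi$ is real-analytic near $p$ with a finite-order zero of $d\Phi$, the leading-term analysis and the contradiction via the Steklov identity are a finite computation. I would also double-check the borderline algebra in the case where some $\Phi_i(p)=0$, i.e. $p$ lies on an equatorial plane, since then several terms in $\sum_i\sigma_i\Phi_i\partial_\nu\Phi_i$ drop out; but conformality ($a\cdot a=0$, $a\neq 0$) together with the remaining relations still closes the argument.
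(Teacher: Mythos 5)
Your proposal has a genuine gap, and it is not merely technical. First, a structural slip: for a \emph{conformal} harmonic map the Hopf differential $\partial_z\Phi\cdot\partial_z\Phi$ vanishes identically (conformality is exactly its vanishing), so it cannot satisfy $e^{2u}=2\lvert\psi\rvert$; the holomorphic object that detects branch points is $\partial_z\Phi$ itself. Relatedly, reflecting the target across the affine tangent hyperplane at $\Phi(p)$ does not give a continuous extension across the boundary arc: nearby boundary values lie on the curved ellipsoid and are not fixed by that reflection, so the extended map does not even match along the arc. Orthogonality suffices for a flat supporting hyperplane, not for $\mathcal{E}$; boundary smoothness here has to come from elliptic regularity for the Steklov system (the weight $e^{v}$ is bounded, and a Neumann weight that vanishes is harmless), not from a reflection principle.

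Second, and decisively, the ``overdetermined relation'' you hope to extract from $\Phi(w)=\Phi(p)+\mathrm{Re}\left(a\,w^{k+1}\right)+o(\lvert w\rvert^{k+1})$ does not materialize at leading order. At a boundary branch point of order $k$ both sides of $\langle\sigma\Phi,\partial_\nu\Phi\rangle=e^{u}\lvert\sigma\Phi\rvert$ vanish to the same order $k$, so there is no order mismatch; and the local constraints one actually gets, namely $\langle\sigma\Phi(p),\mathrm{Re}(a)\rangle=0$ from differentiating $\langle\sigma\Phi,\Phi\rangle=1$ along the boundary, $\mathrm{Im}(a)\parallel\sigma\Phi(p)$ from $\partial_\nu\Phi\parallel\sigma\Phi$, and $a\cdot a=0$ from conformality, are mutually consistent. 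The sign condition $e^{u}\ge 0$ only rules out odd $k$; an even-order boundary branch point passes every one of these leading-order tests, so a purely local asymptotic argument as sketched cannot conclude. The paper's proof is instead global and much shorter: fix $x\in\partial\Sigma$, form the harmonic function $\psi=\langle\sigma\Phi(x),\Phi\rangle$, use subharmonicity of $f=\langle\sigma\Phi,\Phi\rangle$ together with $f=1$ on the boundary and Cauchy--Schwarz to see that $\psi\le 1=\psi(x)$ on all of $\mathbb{D}$, and then apply the Hopf boundary point lemma to get $\partial_\nu\psi(x)>0$, which is precisely $e^{u(x)}>0$. Some maximum-principle input of this kind is what your plan is missing.
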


\begin{proof}
Let $x\in \partial \Sigma$. We set $\psi = \sigma_1 \phi_1(x) \phi_1 + \cdots + \sigma_n \phi_n(x) \phi_n$. We have that for any $y\in \mathbb{D}$,
$$ \psi (y) = \left\langle \sigma \phi(x),\phi(y) \right\rangle \leq \sqrt{\left\langle \sigma \phi(x),\phi(x) \right\rangle}\sqrt{ \left\langle \sigma \phi(y),\phi(y) \right\rangle} = \sqrt{\left\langle \sigma \phi(y),\phi(y) \right\rangle} $$
The function $f: y \mapsto \left\langle \sigma \phi(y),\phi(y) \right\rangle $ is subharmonic since
$$ \Delta f = - \left\langle \sigma \nabla \phi,\nabla\phi \right\rangle \leq 0 \hskip.1cm,$$
so that $f$ realizes its maximum at the boundary, and $f = 1$ at the boundary gives that
$$ \psi(y) \leq 1 = \psi(x) \hskip.1cm.$$
$\psi$ is a harmonic function that realizes its maximum at $x\in \mathbb{S}^1$. Since $\psi$ is harmonic, by the Hopf lemma, we must have $\partial_{\nu} \psi(x)\neq 0$. And we have that
$$ e^{u(x)} = \left\vert \sigma \Phi(x) \right\vert^2 e^{v(x)} = \left\langle \sigma\Phi(x), \partial_{\nu}\Phi(x) \right\rangle = \partial_{\nu} \psi(x) \neq 0 \hskip.1cm. $$

\end{proof}

\subsection{Free boundary minimal disks by first and second eigenfunctions do not have branched points}

The main property of the subsection relies on the following claim:

\begin{cl} \label{clnonvanishinggradatnodalset}
Let $x\in \mathbb{D}$ and let $\psi$ a first or second Steklov eigenfunction on the disk, associated to some positive weight $e^v$ on the boundary. Then 
$$ \psi(x)=0 \Rightarrow \nabla\psi(x) \neq 0 \hskip.1cm. $$
\end{cl}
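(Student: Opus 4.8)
The plan is to argue by contradiction and reduce to an Euler‑characteristic count for the nodal set. Suppose $\psi(x_0)=0$ and $\nabla\psi(x_0)=0$ at some interior point $x_0$. Since $\psi$ is a nonzero harmonic function it vanishes to a finite order $k_0\ge 2$ at $x_0$, and, as for any harmonic function, the nodal set $Z:=\psi^{-1}(0)$ consists near $x_0$ of exactly $2k_0$ analytic arcs meeting at $x_0$ (and of a single arc through every non‑critical zero), after a local change of coordinates. The first observation, from the maximum principle, is that $Z$ contains no Jordan curve lying inside the open disk: such a curve would bound a domain on which $\psi$ is harmonic and vanishes on the boundary, forcing $\psi\equiv0$. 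Hence every connected component of $Z$ reaches $\mathbb{S}^1$, the graph $G:=Z\cup\mathbb{S}^1$ is connected, and no nodal domain (connected component of $\mathbb{D}\setminus Z$) is relatively compact in $\mathbb{D}$.

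Next I would describe $Z$ near the boundary using the Steklov condition $\partial_\nu\psi=\sigma e^{v}\psi$: at a zero $x\in\mathbb{S}^1$ it gives $\partial_\nu\psi(x)=0$, so either $\partial_\tau\psi(x)\neq0$ and, by the implicit function theorem, a single nodal arc leaves $x$ orthogonally to $\mathbb{S}^1$, or a Taylor expansion of $\partial_\nu\psi-\sigma e^v\psi\equiv0$ along $\mathbb{S}^1$ shows that in boundary coordinates the leading homogeneous part of $\psi$ at $x$ is $\mathrm{Re}(z^{m})$ for some $m\ge2$, so $m$ nodal arcs leave $x$. Together with the interior picture, $G$ is a finite connected graph in $\overline{\mathbb{D}}$ whose faces inside $\mathbb{D}$ are exactly the $N$ nodal domains, and whose vertices have prescribed degrees: $2k$ at an interior critical zero of order $k$, $3$ at a non‑critical boundary zero, and $m+2$ at a boundary critical zero of order $m$. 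Euler's formula together with $\sum_v\deg v=2E$ then gives $N=1+\sum_{i}(k_i-1)+\tfrac12\bigl(b_0+\sum_j m_j\bigr)$, where $k_1,\dots,k_c\ge2$ are the orders of the interior critical zeros, $b_0$ is the number of non‑critical boundary zeros, and the boundary critical zeros have orders $m_j\ge2$. Since $\psi|_{\mathbb{S}^1}$ has zero integral against the positive weight $e^v$ (orthogonality to the constants, as $\sigma>0$), it changes sign, hence $b_0+\sum_j m_j\ge2$; combined with $k_i\ge2$ this forces $N\ge c+2\ge3$, where $c\ge1$ is the number of interior critical zeros.

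The last step is the Courant nodal‑domain bound for Steklov eigenfunctions, applicable here because every nodal domain touches $\mathbb{S}^1$, so the usual variational argument (a linear combination of restrictions of $\psi$ to nodal domains, made orthogonal to the first eigenspaces, is an eigenfunction that vanishes on an open set, contradicting unique continuation) goes through: a first Steklov eigenfunction has at most $2$ nodal domains and a second at most $3$. If $\psi$ is a first eigenfunction this already contradicts $N\ge3$. If $\psi$ is a second eigenfunction it forces $N=3$, hence $c=1$, a single interior critical zero of order exactly $2$, no boundary critical zeros, and exactly two non‑critical boundary zeros; but then the four nodal arc‑ends at $x_0$ cannot all be joined to the two boundary vertices, each of which (having degree $3$) absorbs only one nodal arc, so at least one nodal arc must be a loop based at $x_0$ — a Jordan curve contained in $Z\cap\mathbb{D}$, which was excluded in the first step. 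In either case we reach a contradiction, which proves the claim.

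I expect the main obstacle to be precisely this borderline case of a second eigenfunction: the coarse count $N\le3$ does not by itself contradict $N\ge3$, so one must bring in the finer combinatorics of the nodal graph, namely the impossibility of a nodal loop at the critical vertex, to close the argument. A secondary technical point is the local description of $Z$ at boundary zeros, where the Steklov boundary condition must be inserted into the Taylor expansion with some care; the remaining ingredients (the interior structure of nodal sets of harmonic functions, Euler's formula, and the variational proof of the nodal‑domain bound) are routine.
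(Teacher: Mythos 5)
Your argument is correct, and at its core it is the same strategy as the paper's: rule out closed nodal loops by the maximum principle, observe that an interior zero with vanishing gradient is a singular point of the nodal set producing extra nodal domains, and contradict the Courant bound (at most $2$, resp.\ $3$, nodal domains for a first, resp.\ second, eigenfunction, Courant being applicable since no nodal domain is compactly contained in $\mathbb{D}$). The execution differs in one interesting way. The paper asserts directly that an interior singular zero forces at least \emph{four} nodal domains — which can be seen purely in the interior: the $2k_0\geq 4$ alternating sign sectors at $x_0$ cannot merge, since a path in $\{\psi>0\}$ joining two positive sectors would trap a negative nodal domain compactly inside $\mathbb{D}$, contradicting the maximum principle — and then concludes with Hopf's lemma at smooth nodal points. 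Your Euler-characteristic count, using only the two sign changes of $\psi$ on $\mathbb{S}^1$, gives the weaker bound $N\geq 3$, which is why you must treat the borderline second-eigenfunction case by the graph-degree/loop argument; that argument is fine, but note that $N=3$ with $b_0+\sum_j m_j=2$ also allows the subcase $b_0=0$ with a single boundary critical zero of order $m=2$ (you only list the case of two simple boundary zeros) — harmless, since that vertex still absorbs only two of the four arc-ends at $x_0$ and the same loop contradiction applies. Two further technical points your route needs and the paper's interior-only argument avoids: finiteness of the boundary zeros of $\psi$ and the local normal form $\mathrm{Re}(z^m)$ at boundary critical zeros both require enough regularity of the weight $e^v$ up to $\mathbb{S}^1$, so your proof is slightly more demanding on the data, in exchange for a sharper and fully explicit combinatorial picture of the nodal set.
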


\begin{proof}
By the Courant nodal theorem, $\psi$ has at most three nodal domains. Moreover, the nodal set is either a smooth curve having two ends on the boundary or the disjoint union of two connected curves having two ends at the boundary. Indeed, since eigenfunctions are non constant and harmonic, the nodal set cannot contain a closed curve, and, the nodal set cannot contain a singularity in the interior of the disk since otherwise, the eigenfunction would have at least four nodal domains. Now let $x\in \mathbb{D}$ be such that $\psi(x)=0$. Let $D$ be a nodal domain such that $x \in \partial D$. Then $\partial D$ is smooth at $x$ and $x$ is an extremal point of the harmonic function $\psi$ on $D$. By the Hopf lemma $ \partial_{\nu}\psi(x) \neq 0$. This ends the proof of the claim.
\end{proof}

It is clear that $\Phi$ has at least two coordinates since first and second eigenfunctions cannot be constant. We know by \cite{jammes} that the multiplicity of first eigenfunctions on the disk is less than 2 and that the multiplicity of second eigenfunctions on the disk is less than 2. Therefore, $\Phi$ has at most 3 coordinates. We consider the cases $n=2$ and $n=3$ separately.

\medskip

\begin{prop}
We first assume $\Phi = (\phi_0,\phi_1,\phi_2) : \mathbb{D}\to \mathbb{R}^3$ is a possibly branched free boundary minimal immersion into $ \mathcal{E} = \{ x\in \R^2 ; \sigma_1 x_0^2 + \sigma_2 (x_1^2+x_2^2)= 1 \}$, where $\sigma_1 < \sigma_2$ and $\phi_0$ is a first eigenfunction and $\phi_1$ and $\phi_2$ are second eigenfunctions. Then $\Phi$ does not have any branched point.
\end{prop}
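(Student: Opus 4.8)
The plan is to show that a branch point can only occur in the interior of $\mathbb{D}$ (branch points on $\partial\mathbb{D}$ are already excluded by Claim \ref{clnobranchedpointsboundary}), and then rule interior branch points out by a nodal-set argument. Suppose $x_0\in\mathbb{D}$ is a branch point, i.e. $e^{u(x_0)}=0$, equivalently $\nabla\Phi(x_0)=0$, equivalently $\nabla\phi_i(x_0)=0$ for $i=0,1,2$. In particular, after subtracting the constant $\phi_i(x_0)$ (which is not an eigenfunction, but the difference has a zero of order $\geq 2$ at $x_0$), each $\phi_i$ has a critical point at $x_0$. The idea is to exploit the rigidity of the target: since $\Phi(\mathbb{D})\subset\mathcal{E}$, the three functions $\phi_0,\phi_1,\phi_2$ are not independent as harmonic functions but are tied together by the quadratic constraint $\sigma_1\phi_0^2+\sigma_2(\phi_1^2+\phi_2^2)\equiv 1$ on $\partial\mathbb{D}$ — and hence, via the subharmonicity argument in the proof of Claim \ref{clnobranchedpointsboundary}, $f:=\sigma_1\phi_0^2+\sigma_2(\phi_1^2+\phi_2^2)\leq 1$ on all of $\mathbb{D}$.

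The first key step is to use Claim \ref{clnonvanishinggradatnodalset}: the first eigenfunction $\phi_0$ has the property that $\phi_0(x)=0\Rightarrow\nabla\phi_0(x)\neq 0$. Hence if $x_0$ is a branch point, then $\phi_0(x_0)\neq 0$ — say $\phi_0(x_0)>0$ after a sign change. Similarly, any fixed linear combination $a\phi_1+b\phi_2$ with $(a,b)\neq(0,0)$ is a second eigenfunction, so it cannot vanish together with its gradient at $x_0$; since $\nabla\phi_1(x_0)=\nabla\phi_2(x_0)=0$, we conclude $a\phi_1(x_0)+b\phi_2(x_0)\neq 0$ for \emph{every} $(a,b)\neq(0,0)$ — but that is impossible in $\mathbb{R}$ for two real numbers (choose $(a,b)$ orthogonal to $(\phi_1(x_0),\phi_2(x_0))$). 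This already forces $\phi_1(x_0)=\phi_2(x_0)=0$, which then contradicts $\nabla\phi_1(x_0)=0$ unless... wait — one must be careful: a second eigenfunction that vanishes at $x_0$ must have nonvanishing gradient there, so $\phi_1(x_0)=0$ forces $\nabla\phi_1(x_0)\neq 0$, contradicting that $x_0$ is a branch point. So in fact the second-eigenfunction argument alone does the job, provided we know $\phi_1$ (or some combination $a\phi_1+b\phi_2$) genuinely vanishes at $x_0$; the linear-algebra observation above provides exactly such a combination.

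So the skeleton is: (i) recall branch points on the boundary are excluded, so assume $x_0\in\mathbb{D}$; (ii) at a branch point $\nabla\phi_i(x_0)=0$ for all $i$; (iii) pick $(a,b)\neq 0$ with $a\phi_1(x_0)+b\phi_2(x_0)=0$; then $\psi:=a\phi_1+b\phi_2$ is a second Steklov eigenfunction with $\psi(x_0)=0$ and $\nabla\psi(x_0)=0$, contradicting Claim \ref{clnonvanishinggradatnodalset}. I would write this as the whole proof, since it is short. The main subtlety — the step I expect to need the most care — is justifying that every nonzero combination $a\phi_1+b\phi_2$ really is a second eigenfunction (as opposed to possibly a first eigenfunction or a mixture): this uses the hypothesis $\sigma_1<\sigma_2$ together with Jammes' multiplicity bound, namely that the $\sigma_2$-eigenspace is exactly $\mathrm{span}(\phi_1,\phi_2)$ (two-dimensional), so any combination is genuinely a $\sigma_2$-eigenfunction, and $\sigma_2$ is the second eigenvalue, so Claim \ref{clnonvanishinggradatnodalset} applies. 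One should also double-check that $\phi_1,\phi_2$ are linearly independent (they are, being chosen as such when we write $\Phi$ with three coordinates), so that the combination $\psi$ is not identically zero and is a bona fide eigenfunction. With those points nailed down, the contradiction is immediate and the proposition follows.
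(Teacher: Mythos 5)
Your proposal is correct and is essentially the paper's own argument: boundary branch points are excluded by Claim \ref{clnobranchedpointsboundary}, and at an interior point one chooses $v\in\mathbb{S}^1$ orthogonal to $(\phi_1(z),\phi_2(z))$ so that $\langle v,(\phi_1,\phi_2)\rangle$ is a second eigenfunction vanishing at $z$, whence Claim \ref{clnonvanishinggradatnodalset} forces its gradient (hence $\nabla\Phi(z)$) to be nonzero. The detour about $\phi_0$ in your middle paragraph is unnecessary, but your final skeleton matches the paper's proof, which even records the slightly stronger conclusion $\nabla(\phi_1,\phi_2)\neq 0$ in $\mathbb{D}$.
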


\begin{proof}
Notice that by Claim \ref{clnobranchedpointsboundary}, $\Phi$ does not have any branched point at the boundary. It remains to prove that for $z \in \mathbb{D}$, we have that $\nabla \Phi (z) \neq 0$. In fact, we will even prove that $\nabla \eta(z) \neq 0$, where $\eta = (\phi_1,\phi_2)$. Let $v \in \mathbb{S}^1$ be such that $\left\langle v, \eta(z) \right\rangle = 0$. Then $\left\langle v, \eta \right\rangle $ vanishes at $z$ and the previous claim implies that
 $ \langle v, \nabla \eta(z) \rangle = \nabla\left(\langle v, \nabla \eta \rangle\right)(z) \neq 0$.
\end{proof}

In the following proposition, we even obtain that planar free boundary minimal disks into ellipsoids by first and second eigenfunction are embeddings.

\begin{prop} \label{propdiffeodim2}
We assume that $\Phi = (\phi_1,\phi_2) : \mathbb{D}\to \mathbb{R}^2$ is a possibly branched free boundary minimal immersion into $ \mathcal{E} = \{ x\in \R^2 ; \sigma_1 x_1^2 + \sigma_2 x_2^2= 1 \}$, where $\sigma_1\leq \sigma_2$ and $\phi_1$ is a first eigenfunction and $\phi_2$ is either a first or second eigenfunction. Then $\Phi$ is a diffeomorphism.
\end{prop}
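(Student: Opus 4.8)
The plan is to prove that the boundary restriction $\gamma:=\Phi|_{\mathbb{S}^1}$ is a diffeomorphism onto the ellipse $\mathcal{E}=\{\sigma_1 x_1^2+\sigma_2 x_2^2=1\}$, and then to apply the Radó–Kneser–Choquet theorem. First I would collect the elementary facts. The weight $e^{v}=|\sigma\Phi|^{-1}$ is smooth and bounded below on $\mathbb{S}^1$, since $\sigma_1\phi_1^2+\sigma_2\phi_2^2=1$ there forces $|\sigma\Phi|^2=\sigma_1^2\phi_1^2+\sigma_2^2\phi_2^2\ge\sigma_1>0$; hence $\phi_1,\phi_2$ are smooth up to $\mathbb{S}^1$. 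By Claim \ref{clnobranchedpointsboundary}, $\partial_\nu\Phi\neq 0$ on $\mathbb{S}^1$, and conformality gives $|\partial_\theta\Phi|=|\partial_r\Phi|=|\partial_\nu\Phi|$ on $\mathbb{S}^1$, so $\partial_\theta\Phi\neq 0$ and $\gamma$ is a regular closed curve contained in the $1$-manifold $\mathcal{E}$. Moreover $f:=\sigma_1\phi_1^2+\sigma_2\phi_2^2$ is subharmonic and identically $1$ on $\mathbb{S}^1$, so by the strong maximum principle $f<1$ in $\mathbb{D}$; thus $\Phi(\mathbb{D})\subset\Omega$, where $\Omega:=\{\sigma_1 x_1^2+\sigma_2 x_2^2<1\}=\mathrm{int}\,co(\mathcal{E})$ is a bounded convex domain.

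Next I would compute the degree of $\gamma$. Being a regular curve into the compact $1$-manifold $\mathcal{E}\cong\mathbb{S}^1$, the map $\gamma$ is a local diffeomorphism, hence a covering map, say of $|d|\ge 1$ sheets. The coordinate function $x_1$ vanishes transversally at exactly the two points $\mathcal{E}\cap\{x_1=0\}$, so its pullback $x_1\circ\gamma=\phi_1|_{\mathbb{S}^1}$ has exactly $2|d|$ zeros, all transverse, hence $2|d|$ sign changes. On the other hand $\phi_1$ is a first Steklov eigenfunction, so by Courant's nodal theorem it has exactly two nodal domains in $\mathbb{D}$; by the nodal analysis in the proof of Claim \ref{clnonvanishinggradatnodalset} (Hopf lemma, harmonicity), its nodal set is a single smooth embedded arc meeting $\mathbb{S}^1$ transversally in exactly its two endpoints, so $\{\phi_1>0\}\cap\mathbb{S}^1$ and $\{\phi_1<0\}\cap\mathbb{S}^1$ are two arcs and $\phi_1|_{\mathbb{S}^1}$ changes sign exactly twice. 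Therefore $|d|=1$ and $\gamma:\mathbb{S}^1\to\mathcal{E}$ is a diffeomorphism.

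To finish, note that $\Phi$ is harmonic on $\mathbb{D}$, continuous on $\overline{\mathbb{D}}$, and its boundary trace $\gamma$ is a homeomorphism onto the boundary of the bounded convex domain $\Omega$. By the Radó–Kneser–Choquet theorem, $\Phi$ restricts to a diffeomorphism from $\mathbb{D}$ onto $\Omega$. Since $\gamma$ is a diffeomorphism onto $\partial\Omega=\mathcal{E}$ and the Jacobian $J_\Phi=e^{2u}$ does not vanish on $\mathbb{S}^1$ (Claim \ref{clnobranchedpointsboundary}), $\Phi$ is in fact a diffeomorphism of $\overline{\mathbb{D}}$ onto $co(\mathcal{E})$. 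In particular this handles the round case $\sigma_1=\sigma_2$, recovering Nitsche's rigidity, and, since only the spectral position of $\phi_1$ entered the argument, the case where $\phi_2$ is a genuine second eigenfunction as well.

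The main obstacle is the degree count for $\gamma$: one really needs that $\phi_1|_{\mathbb{S}^1}$ changes sign exactly twice, i.e. that the nodal arc of the first eigenfunction touches $\mathbb{S}^1$ only at its two endpoints and transversally there. This is exactly where the "first eigenfunction" hypothesis and the Hopf-lemma input behind Claim \ref{clnonvanishinggradatnodalset} are used; the remaining steps are a maximum-principle observation and a citation of Radó–Kneser–Choquet.
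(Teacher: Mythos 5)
Your argument is correct and follows essentially the same route as the paper: use conformality plus Claim \ref{clnobranchedpointsboundary} to see that $\Phi|_{\mathbb{S}^1}$ is an immersion, hence a covering of $\mathcal{E}$, pin the degree to $1$ via the nodal structure of the first eigenfunction $\phi_1$ (two nodal domains, nodal arc hitting $\mathbb{S}^1$ at exactly two points), and then invoke the Kneser/Radó--Kneser--Choquet theorem for harmonic maps onto a convex curve. Your degree count through the pullback of $x_1$ is just a slightly more explicit phrasing of the paper's same step, so no substantive difference.
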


\begin{proof}
We first prove that the curve $\Phi_{\vert \mathbb{S}^1} : \mathbb{S}^1 \to \mathcal{E}$ is an embedding. 

\medskip

Since $\Phi$ is conformal, we must have that $\left\vert \partial_r \Phi \right\vert^2=\left\vert \partial_{\theta}\Phi \right\vert^2$ on $\mathbb{S}^1$. 
Moreover, by Claim \ref{clnobranchedpointsboundary}, $\left\vert \partial_r \Phi \right\vert^2 = \left\vert \sigma\Phi \right\vert^2 e^{2v}$ never vanishes. 
Therefore, $\Phi_{\vert \mathbb{S}^1} : \mathbb{S}^1 \to \mathcal{E}$ is an immersion. 
Then it is a covering map. Knowing that $\phi_1$ is a first eigenfunction, $\phi_1$ has at most two nodal domains, and the nodal line meets the boundary at two points. 
Therefore, the degree of $\Phi_{\vert \mathbb{S}^1}$ must be equal to $1$ and $\phi_{\vert \mathbb{S}^1}$ is an embedding.

\medskip

As a conclusion, since the ellipse $\mathcal{E}$ encloses a convex curve and $\Phi$ is harmonic in $\mathbb{D}$ a classical result by Kneser \cite{kneser} (see also Choquet \cite{choquet} or a more general result by Alessandrini-Nesi \cite{alessandrininesi}) gives that $\Phi$ is a diffeomorphism.
\end{proof}

In the following section, we also prove embeddedness of immersed non planar free boundary disks into ellipsoids by first and second eigenfunctions with symmetry assumptions.

\subsection{Non planar free boundary minimal disks with symmetry properties are embedded} \label{sectionembedding}

From now to the end of the subsection, we assume that $\Phi = (\phi_0,\phi_1,\phi_2) : \mathbb{D}\to \mathbb{R}^3$ is a free boundary minimal immersion into $ \mathcal{E} = \{ x\in \R^2 ; \sigma_1 x_0^2 + \sigma_2 \left(x_1^2 + x_2^2\right)= 1 \}$, where $\sigma_1 < \sigma_2$ and $\phi_0$ is a first eigenfunction and $\phi_1$ and $\phi_2$ are second eigenfunctions with respect to the positive function $e^v = \Phi. \partial_r \Phi$ on $\mathbb{S}^1$ (positive thanks to Claim \ref{clnobranchedpointsboundary}). We also assume that $e^v$ satisfies the following symmetry assumtions:
$$\forall (x,y)\in \mathbb{S}^1, e^{v(x,y)} = e^{v(-x,y)} = e^{v(x,-y)} $$

\begin{cl} \label{claimsymmetry}
Up to reparametrization and rotation of $\Phi$, we must have that
$$ \forall x,y \in \mathbb{D}, \phi_0(x,-y) = - \phi_0(x,y) \text{ and } \phi_0(-x,y) = \phi_0(x,y) $$
$$ \forall x,y \in \mathbb{D},  \phi_1(x,-y) = \phi_1(x,y)  \text{ and } \phi_1(-x,y) = - \phi_1(x,y) $$
$$ \forall x,y \in \mathbb{D}, \phi_2(x,-y) = \phi_2(x,y) \text{ and } \phi_2(-x,y) = \phi_2(x,y) $$
where $\phi_0$ and $\phi_1$ have exactly $2$ nodal domains and $\phi_2$ has exactly $3$ nodal domains. Moreover, $\phi_2$ does not vanish on $[-1,1]\times\{0\} \cup \{ (0,\pm 1) \}$.
\end{cl}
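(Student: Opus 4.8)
The plan is to exploit the symmetry group $\Z/2\times\Z/2$ generated by the reflections $R_1:(x,y)\mapsto(-x,y)$ and $R_2:(x,y)\mapsto(x,-y)$, together with the fact that the weight $e^v$ is invariant under both. First I would observe that since $e^v$ is $R_1$- and $R_2$-invariant, the Steklov operator commutes with the pullback actions $\psi\mapsto \psi\circ R_1$ and $\psi\mapsto \psi\circ R_2$. Hence each Steklov eigenspace decomposes into the four joint sign-characters of $R_1,R_2$; a function in the $(\varepsilon_1,\varepsilon_2)$-isotypic component satisfies $\psi\circ R_i=\varepsilon_i\psi$. The first eigenfunction $\phi_0$ spans a multiplicity-one space (by Jammes, and because a first eigenfunction has exactly two nodal domains, so the space can't contain two independent eigenfunctions), hence $\phi_0$ is itself a joint eigenvector: $\phi_0(-x,y)=\varepsilon_1\phi_0(x,y)$, $\phi_0(x,-y)=\varepsilon_2\phi_0(x,y)$. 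If $\varepsilon_1=\varepsilon_2=+1$, $\phi_0$ would be even in both variables; I would rule this out because then $\phi_0$ restricted to each of the four open quadrants is determined by its values on one quadrant, and a Courant/Hopf-type argument (as in Claim \ref{clnonvanishinggradatnodalset}) forces a nodal line of $\phi_0$ to be one of the coordinate axes, contradicting that the nodal set of a first eigenfunction is a single arc with two endpoints on $\mathbb{S}^1$ unless... — more cleanly: an $R_1,R_2$-even harmonic function on $\mathbb D$ either is a constant or has $\partial_\nu\psi=0$ somewhere forced by symmetry at the axis intersections with $\mathbb{S}^1$, contradicting the Hopf lemma applied as in Claim \ref{clnobranchedpointsboundary}. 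Similarly $\varepsilon_1=\varepsilon_2=-1$ is excluded. So $(\varepsilon_1,\varepsilon_2)$ is $(+,-)$ or $(-,+)$; after swapping roles of the two plane coordinates / relabeling we may take $\phi_0$ odd in $y$ and even in $x$, which is the claimed transformation law for $\phi_0$, and then $\phi_0$ having exactly two nodal domains with the $x$-axis as its nodal line.

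Next I would analyze the two-dimensional second-eigenspace $V=\mathrm{span}(\phi_1,\phi_2)$. Since $R_1,R_2$ act on $V$ commuting and are involutions, $V$ is the sum of joint character subspaces; as $\dim V=2$, either $V$ splits as a sum of two distinct characters, or $V$ is entirely one character. The latter is impossible: a $2$-dimensional space of harmonic functions all sharing the same parity pair would, by the nodal-domain count (a second eigenfunction has at most $3$ nodal domains, and generic combinations have exactly $3$ but some combination has exactly $2$), produce a first eigenfunction in $V$ with the same character as $\phi_0$ after... — the clean argument is: in any such plane there is a nonzero element vanishing at a prescribed interior point, and iterating (as in the proof of Proposition \ref{propdiffeodim2}) we find an element with only two nodal domains, hence a first eigenfunction, forcing $\sigma_1=\sigma_2$, contradiction. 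So $V$ carries two distinct characters, each one-dimensional. One of these characters must be $(+,+)$ or $(-,-)$ or the two remaining ones $(-,+)$, $(+,-)$; I would pin down the pair by counting nodal domains: a $(+,+)$-even function, if it vanishes somewhere on an axis-intersection with $\mathbb{S}^1$, again contradicts Hopf unless it is the specific even-even eigenfunction with $3$ nodal domains symmetric about both axes — this is exactly $\phi_2$. Testing the possibilities, the only assignment consistent with $\phi_0\perp V$ in the appropriate $L^2(e^v\,dL)$ sense and with the nodal-domain bounds is: $\phi_1$ odd in $x$, even in $y$ (two nodal domains, nodal line the $y$-axis), and $\phi_2$ even in $x$, even in $y$ (three nodal domains), which is the claim. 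I would verify orthogonality is automatic from the character decomposition (distinct characters are $L^2$-orthogonal for the invariant weight), and that $(\phi_1,\phi_2)$ so chosen are independent, using Jammes' multiplicity bound to know these are the only available characters.

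Finally, for the last assertions I would argue that $\phi_2$ cannot vanish on $[-1,1]\times\{0\}$ nor at $(0,\pm1)$. On the $x$-axis segment: $\phi_2$ even in $y$ means $\partial_y\phi_2=0$ there, so if also $\phi_2=0$ at some interior point of the segment then $\nabla\phi_2$ vanishes (since $\partial_x\phi_2$ also vanishes at a point where the $R_2$-even function restricted to the invariant axis attains a critical value — more carefully: if $\phi_2(x_0,0)=0$ then because $\phi_2$ is $R_2$-even, $(x_0,0)$ is either a nodal crossing forced to have a fourth nodal domain, or a point where $\nabla\phi_2(x_0,0)=0$), contradicting Claim \ref{clnonvanishinggradatnodalset} (a second eigenfunction has nonvanishing gradient on its nodal set) together with the observation that an interior zero with $\nabla\phi_2\ne 0$ on the $R_2$-invariant axis would, by the evenness, create a nodal arc crossing the axis transversally and thus symmetric pairs of nodal arcs, producing at least four nodal domains, exceeding the Courant bound for a second eigenfunction. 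The same reasoning at the boundary points $(0,\pm1)$, which are fixed by $R_1$, rules out $\phi_2(0,\pm1)=0$ via Hopf/Claim \ref{clnonvanishinggradatnodalset}. The step I expect to be the main obstacle is the bookkeeping in the second paragraph: correctly eliminating every inconsistent assignment of the four characters to $\phi_0$ and to the two lines of $V$, i.e. showing the nodal-domain counts ($2$ for $\phi_0$ and $\phi_1$, $3$ for $\phi_2$) together with mutual $L^2(e^v)$-orthogonality and Jammes' multiplicity bounds force exactly the stated parities; the analytic inputs (Hopf lemma, Courant, Claims \ref{clnobranchedpointsboundary} and \ref{clnonvanishinggradatnodalset}) are all in hand, but assembling them into a clean case elimination is the delicate part.
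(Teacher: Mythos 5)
Your overall strategy (decompose the eigenspaces under the two reflections, then pin down the characters by Courant, Hopf and the nodal claims) is the same skeleton as the paper's proof, but two of the steps you rely on are genuinely wrong, and they are exactly the places where the paper has to work hardest. First, to exclude the possibility that the whole second eigenspace $V$ carries a single character you argue that inside such a plane one can find an element with only two nodal domains, ``hence a first eigenfunction, forcing $\sigma_1=\sigma_2$.'' That inference is false: Courant gives an upper bound, not a characterization, and a second eigenfunction may perfectly well have two nodal domains. The dangerous case is when every element of $V$ is even in both variables; the paper eliminates it by a topological argument: each $\left\langle \eta,v\right\rangle$ would then have exactly $3$ nodal domains, the map $N(v)=$ number of \emph{positive} nodal domains of $\left\langle \eta,v\right\rangle$ would be continuous, hence constant on $\mathbb{S}^1$, while $\{N(v),N(-v)\}=\{1,2\}$ — a contradiction. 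Nothing in your proposal replaces this degree-type argument. (Relatedly, your ``cleaner'' exclusion of the even--even case for $\phi_0$ is incorrect: the symmetries force \emph{tangential}, not normal, derivatives to vanish at the axis points of $\mathbb{S}^1$, so no contradiction with Hopf arises; the correct route is the nodal-domain count — an even--even eigenfunction has at least $3$ nodal domains, too many for a first eigenfunction.)

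Second, your last paragraph misses the hardest part of the statement, the non-vanishing of $\phi_2$ on $[-1,1]\times\{0\}$. You claim that an interior zero on the axis with nonvanishing gradient would create ``symmetric pairs of nodal arcs, producing at least four nodal domains.'' It would not: the configuration in which the two nodal curves of the even--even eigenfunction join $z$ to $\bar z$ and $-z$ to $-\bar z$ (each curve crossing the segment $(-1,1)\times\{0\}$ transversally, once) is fully compatible with both symmetries and has exactly $3$ nodal domains, so Courant and Claim \ref{clnonvanishinggradatnodalset} do not exclude it. This is precisely the configuration the paper rules out with the delicate analysis of the family $\left\langle \eta,v\right\rangle$, $v\in\mathbb{S}^1$, splitting $\mathbb{S}^1=A_2\sqcup A_3\sqcup A_4$ according to the number of boundary zeros and following the nodal set up to a first ``singular'' direction $v_0$; only after knowing the curves join $z$ to $-\bar z$ and $-z$ to $\bar z$ does the easy argument (a crossing of the axis would create an interior nodal domain) apply. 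Finally, note that the claim is about the actual coordinates $\phi_1,\phi_2$ of the immersion, not about a chosen symmetric basis $\eta_1,\eta_2$ of the eigenspace: after rotating so that $\phi_1\parallel\eta_1$, the symmetry of $\phi_2$ is not automatic but is deduced in the paper from the constraint $\sigma_1\phi_0^2+\sigma_2(\phi_1^2+\phi_2^2)=1$ on $\mathbb{S}^1$ together with harmonic extension; this passage is absent from your proposal.
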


\begin{proof} The proof the claim is based on the 3 following simple facts

\medskip

\textbf{Fact 1:} For any Steklov eigenfunction $\phi : \mathbb{D}\to \mathbb{R}$, if for all $(x,y)\in \mathbb{D}$, $\phi(x,y) = \phi(-x,y) = \phi(x,-y)$ then $\phi$ has at least $3$ nodal domains and if for all $(x,y)\in \mathbb{D}$, $\phi(x,y) = -\phi(-x,y) = -\phi(x,-y)$ then $\phi$ has at least $4$ nodal domains.

\medskip

\textbf{Fact 2:} If $\phi$ is a Steklov eigenfunction on $\mathbb{D}$ associated to the symmetric weight $e^v : \mathbb{S}^1 \to \mathbb{R}$, then $\phi(-x,y)$ and $\phi(x,-y)$ are also Steklov eigenfunctions.

\medskip

\textbf{Fact 3:} A second eigenfunction $\phi$ vanishes on at most 4 points at the boundary. If $\phi$ vanishes at 2 points $p_1, p_2 \in \partial{D}$, then it has exactly 2 nodal domains and the nodal set is a smooth curve ending at $p_1$ and $p_2$. If $\phi$ vanishes at 3 points $p_0, p_1, p_2 \in \partial \mathbb{D}$, then (up to a permutation of the indices of $p_i$) it has exactly 3 nodal domains and the nodal set is the union of two smooth curves: one ending at $p_0$ and $p_1$, the other one ending at $p_0$ and $p_2$ and they only intersect at $p_0$. If $\phi$ vanishes at 4 points $p_1, p_2, p_3,p_4 \in \partial{D}$, then (up to a permutation of the indices of $p_i$) it has exactly 3 nodal domains and the nodal set is a disjoint union of a smooth curve ending at $p_1$ and $p_2$ and a smooth curve ending at $p_3$ and $p_4$.

\medskip

\textbf{Step 1} Up to a quarter rotation in the set of parametrizations, $\phi_0$ has the expected symmetries, and there are orthogonal second eigenfunctions $\eta_1$ and $\eta_2$ such that
$$\forall (x,y)\in \mathbb{D}, \eta_i(x,y) = \eps_i^1 \eta_i(-x,y) = \eps_i^2 \eta_i(x,-y) .$$ 

\medskip

\textbf{Proof of Step 1}

\medskip

From fact 2, since the first eigenvalue is simple, we must have that 
$$\phi_0(x,y) = \pm \phi_0(-x,y) = \pm \phi_0(x,-y).$$ 
From fact 1 and the Courant nodal theorem ($\phi_0$ have at most two nodal domains), we deduce that, up to a quarter rotation on the set of parametrization, we have the expected symmetries on $\phi_0$.

From fact 2, up to replace $\phi_i$ by $\frac{\phi_i( x,y )+ \phi_i(-x,y) + \phi_i(x,-y) + \phi_i(-x,-y) }{4}$ for $i=1,2$, we can find two Steklov eigenfunctions $\eta_1$ and $\eta_2$ associated to $\sigma_2(e^v)$ such that for $i=1,2$, $j=1,2$, there are $\eps_i^j \in \{\pm 1\}$ such that
$$\forall (x,y)\in \mathbb{D}, \eta_i(x,y) = \eps_i^1 \eta_i(-x,y) = \eps_i^2 \eta_i(x,-y) .$$ 

\medskip

\textbf{Step 2} Up to exchange the indices of $\eta_1$ and $\eta_2$, we have that $\eps_1^2 = \eps_2^2 = 1$, that $\eps_1^1 = -1$ and that $\eps_2^1 = 1$:
$$ \forall x,y \in \mathbb{D},  \eta_1(x,-y) = \eta_1(x,y)  \text{ and } \eta_1(-x,y) = - \eta_1(x,y) $$
$$ \forall x,y \in \mathbb{D}, \eta_2(x,-y) = \eta_2(x,y) \text{ and } \eta_2(-x,y) = \eta_2(x,y) $$

\medskip

\textbf{Proof of Step 2:}

\medskip

Let $i\in \{1,2\}$. From fact 1 and the Courant nodal theorem, we cannot have $ \eps_i^1 = \eps_i^2 = -1$. 

More precisely, let's prove that $ \eps_1^2 = \eps_2^2 = 1$. We assume by contradiction that for some $i\in \{1,2\}$, $\eps_i^2 = -1$. Then $\eps_i^1 = 1$, and knowing that the function $\eta_i$ is orthogonal to $\phi_0$, $\eta_i$ must vanish at some point $(x_0,y_0)\in \mathbb{S}^1$ with $y_0\neq 0$. By symmetries this implies that $\eta_i$ vanishes at $(x_0, - y_0)\in \mathbb{S}^1$. Since $\{y = 0\}$ is a nodal set, there are at least two nodal domains in $\mathbb{D}_+$ and two other nodal domains in $\mathbb{D}_-$. Then $\eta_i$ has at leats four nodal domains: this contradicts the Courant nodal theorem. Then $\eps_1^2 = \eps_2^2 = 1$.

By a similar argument, we can prove that $1 \in \{ \eps_1^1,\eps_2^1 \}$. Indeed, if not, $\eta_1$ and $\eta_2$ vanish on $\{x=0\}$. Since $\eta_1$ is orthogonal to $\eta_2$, symmetries and the Courant nodal theorem give a contradiction.

Now, we set $\eta = (\eta_1,\eta_2)$: we have that $\forall (x,y)\in \mathbb{D}, \eta(x,-y) = \eta(x,y)$. By contradiction, we assume that we also have $\eps_1^1 = \eps_2^1 = 1$, that is $\forall (x,y)\in \mathbb{D}, \eta(-x,y) = \eta(x,y)$. By fact $1$, it follows that for any $v\in \mathbb{S}^1$, $\left\langle \eta,v \right\rangle$ is a second eigenfunction having at least 3 nodal domains and therefore exactly 3 nodal domains by the Courant nodal theorem. By continuity, the map $N : v\in \mathbb{S}^1 \mapsto$\textit{ the number of positive nodal domains of $\left\langle \eta,v \right\rangle$} is a constant map. We obtain a contradiction because for any $v\in \mathbb{S}^1$, $\{ N(v) , N(-v) \} = \{1,2\}$. Then, up to exchange the indices of $\eta_1$ and $\eta_2$, we can assume that $\eps_1^1 = -1$ and that $\eps_2^1 = 1$.

\medskip

\textbf{Step 3:} $\eta_1$ has exactly 2 nodal domains and $\eta_1^{-1}(\{0\}) = \{0\} \times [-1,1]$. $\eta_2$ has exactly $3$ nodal domains and $\eta_2$ cannot vanish on $[-1,1]\times\{0\} \cup \{ (0,\pm 1) \}$.

\medskip

\textbf{Proof of Step 3:}

\medskip

We assume by contradiction that $\eta_2$ vanishes on $\{(\pm 1, 0) , (0,\pm 1) \}$, for instance at $(1,0)$. By symmetry $(x,y) \mapsto (x,-y)$, $(1,0)$ has to be the ending point of at least two smooth vanishing curves. By symmetry $(x,y) \mapsto (-x,y)$, $(-1,0)$ is also a zero of $\eta_2$ and the ending point of at least two smooth curves. This contradicts Fact 3. Then, $\eta_2$ vanishes at $z \in \mathbb{S}^1 \setminus \{ (\pm 1,0),(0,\pm 1) \}$ and by symmetries at $z, \bar{z}, -z, -\bar{z} $. 

By Fact 3,  $\{ z, \bar{z}, -z, - \bar{z} \}$ is 
the nodal set at the boundary and the two disjoint smooth nodal curves 
$C_1$ and $C_2$ must not connect $z$ and $-z$ or $\bar{z}$ and $-\bar{z}$. Let's prove that up to change the indices of $C_i$, $C_1$ 
connects $z$ to $-\bar{z}$ and $C_2$ connects $-z$ to $\bar{z}$. We assume by contradiction that $C_1$ connects $z$ to $\bar{z}$ and $-z$ to $-\bar{z}$. Then, let's study the nodal set of the second eigenfunctions $\left\langle \eta,v \right\rangle$ as $v$ describes $\mathbb{S}^1$. In particular, following Fact 3, we can write
$$ \mathbb{S}^1 = A_2 \sqcup A_3 \sqcup A_4 $$
where $A_k$ is the set of $v\in \mathbb{S}^1$ such that $\left\langle \eta,v \right\rangle$ vanishes at exactly $k$ points at the boundary. $A_2$ and $A_4$ are open sets and $A_3$ is a closed set. For instance, $(0,1)\in A_4$ and $(1,0) \in A_2$. Let $v_0 \in A_3$ be a point at the boundary of the connected component of $A_4$ containing $(0,1)$. Then by symmetries, the point $z_0$ intersecting the two smooth branches of  
the nodal set of $\left\langle \eta,v_0 \right\rangle$ has to belong to $\{(\pm 1,0);(0,\pm 1) \}$. If $z_0 = \pm(0,1)$, then by symmetry of $\left\langle \eta,v_0 \right\rangle$ with respect to $(x,y)\mapsto(x,-y)$, $-z_0$ is also such a singular vanishing point. This contradicts fact 3. Then $z_0 = \pm(1,0)$. However, following the nodal set of $\left\langle \eta,v \right\rangle$ for $v \in ] (0,1),v_0 ] \subset A_4$ we obtain that the nodal set of $\left\langle \eta,v_0 \right\rangle$ contains a smooth curve having two ends at $z_0$ : it does not satisfy fact 3. It is a contradiction.

Therefore, $C_1$ 
connects $z$ to $-\bar{z}$ and $C_2$ connects $-z$ to $\bar{z}$. Finally, for $i=1,2$, $C_i \cap [-1,1]\times \{0\} = \emptyset$ because if we assume $C_i \cap [-1,1]\times \{0\} \neq \emptyset$, then the set contains at least two points and by symmetries, $\eta_2$ would have a nodal domain included in the interior $\mathbb{D}$: this is absurd.

\medskip

\textbf{Conclusion: }

\medskip

Up to replace $\Phi$ by $(\phi_0, \cos\theta \phi_1 + \sin \theta \phi_2, -\sin\theta \phi_1 + \cos\theta \phi_2)$, we can assume that $\phi_1 = \alpha \eta_1$ for some $\alpha \in \mathbb{R}$ and $\phi_1$ have the same symmetries as $\eta_1$. Knowing that
$$ \sigma_1 \phi_0^2 + \sigma_2 \left( \phi_1^2 + \phi_2^2 \right) = 1  \text{ on } \mathbb{S}^1$$
we obtain that $\forall (x,y)\in \mathbb{S}^1, \phi_2(x,y)^2 = \phi_2(x,-y)^2 = \phi_2(-x,y)^2$. By harmonic extension, it is also true in $\mathbb{D}$ and $\phi_2$ must have the same symmetries as $\eta_2$.
\end{proof}

\begin{prop} \label{propembedding}
$\Phi$ is an embedding 
\end{prop}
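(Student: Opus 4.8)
The strategy is to exploit the three symmetries established in Claim~\ref{claimsymmetry} to reduce the embeddedness problem to a fundamental domain, and then to use the nodal structure of the coordinate functions to build an explicit ``coordinate chart'' on each piece. First I would record that, by Claim~\ref{claimsymmetry}, the maps $(x,y)\mapsto(-x,y)$ and $(x,y)\mapsto(x,-y)$ induce isometries of $(\mathbb{D},g)$ that act on $\R^3$ by the linear involutions $R_x:(\phi_0,\phi_1,\phi_2)\mapsto(\phi_0,-\phi_1,\phi_2)$ and $R_y:(\phi_0,\phi_1,\phi_2)\mapsto(-\phi_0,\phi_1,\phi_2)$. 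Hence $\Phi$ is equivariant, and it suffices to prove that $\Phi$ restricted to the closed quarter-disk $Q:=\mathbb{D}\cap\{x\ge 0,\,y\ge 0\}$ is injective and that the images of the (at most four) translates $Q,R_xQ,R_yQ,R_xR_yQ$ meet only along the common boundary arcs $\{x=0\}$ and $\{y=0\}$, which map into the coordinate planes $\{\phi_1=0\}$ and $\{\phi_0=0\}$ respectively (using $\phi_1=\alpha\eta_1$ vanishing exactly on $\{0\}\times[-1,1]$ and $\phi_0$ odd in $y$). Injectivity of $\Phi$ on $\mathbb{D}$ then follows from injectivity on $Q$ plus the fact that the four half-planes/quadrants of $\R^3$ cut out by $\{\phi_0=0\}$ and $\{\phi_1=0\}$ are disjoint, together with a check that $\Phi(Q)$ lies in the closed region $\{\phi_0\ge 0,\phi_1\ge 0\}$ (which follows because $\phi_0>0$ on the open upper half and $\phi_1>0$ on the open right half, by Claim~\ref{clnonvanishinggradatnodalset} and the nodal structure).

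Next I would prove injectivity of $\Phi|_Q$. The key is that on $Q$ the pair $(\phi_0,\phi_1)$ already ``spreads out'' in the right way: $\phi_0$ is a first eigenfunction with nodal line $\{y=0\}$ and $\phi_1$ has nodal line $\{x=0\}$, so $\Phi$ maps the two boundary arcs $Q\cap\{x=0\}$ and $Q\cap\{y=0\}$ into the planes $\{\phi_1=0\}$ and $\{\phi_0=0\}$, and maps the boundary arc $Q\cap\mathbb{S}^1$ into the ellipsoid. I would argue that the boundary curve $\partial Q$ maps to a simple closed curve in $\R^3$: on $Q\cap\mathbb{S}^1$ this is the restriction of $\Phi|_{\mathbb{S}^1}$, which is an immersed closed curve, and the question is whether $\Phi|_{\mathbb{S}^1}$ is simple; this is where Fact~3 and the nodal analysis of $\phi_2$ (exactly three nodal domains, not vanishing on $[-1,1]\times\{0\}\cup\{(0,\pm1)\}$) pin down that the fourfold-symmetric boundary curve has no self-intersections. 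Concretely, using that $\sigma_1\phi_0^2+\sigma_2(\phi_1^2+\phi_2^2)=1$ on $\mathbb{S}^1$ and that $(\phi_0,\phi_1)$ has winding number one around the $\phi_2$-axis as one traverses $\mathbb{S}^1$ (since $\phi_0$ and $\phi_1$ each have exactly two nodal domains and their nodal lines on $\mathbb{S}^1$ interlace, by Courant plus the symmetries), the curve $\theta\mapsto\Phi(e^{i\theta})$ projects injectively onto a convex curve in the $(\phi_0,\phi_1)$-plane, forcing it to be simple. Then $\Phi|_{\partial Q}$ is a simple closed curve bounding the minimal disk $\Phi(Q)$.

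Finally, to go from ``simple boundary curve'' to ``embedded disk'' I would invoke the maximum-principle / barrier argument in the style of Fraser--Schoen \cite{FS16}: $\Phi(Q)$ is a minimal surface with boundary a simple closed curve lying in the union of the ellipsoid and two coordinate planes, meeting the planes orthogonally (a reflection/free-boundary condition coming from the symmetry) and meeting the ellipsoid orthogonally; by reflecting across the two planes one obtains the closed minimal disk $\Phi(\mathbb{D})$ with free boundary in $\mathcal{E}$ and with a simple boundary curve, and one can then run the usual argument that a minimal disk in $\R^3$ whose boundary is a simple curve on the boundary of a convex body, meeting it orthogonally, must be embedded --- e.g. by foliating $co(\mathcal{E})$ by the family of ellipsoids $\{\sigma_1x_0^2+\sigma_2(x_1^2+x_2^2)=c\}$ and applying the maximum principle to rule out interior touching, exactly as one rules out self-intersections for first-eigenfunction free-boundary disks into balls. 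The main obstacle is precisely this last step: unlike the ball case, the eigenfunctions here belong to \emph{two} different eigenvalues, so the standard trick ``$\langle\Phi,v\rangle$ is again an eigenfunction with controlled nodal count'' is not directly available for every $v$; one must instead separately handle the $\phi_0$-direction (first eigenfunction, nodal analysis easy) and the $(\phi_1,\phi_2)$-plane (second eigenfunctions, where Claim~\ref{claimsymmetry} and Fact~3 do the bookkeeping), and then glue the two analyses along the symmetry planes. I expect the proof to occupy most of Section~\ref{sectionembedding}, with the delicate point being the verification that the nodal curves of $\phi_2$ separate $Q$ from its reflections so that no two quadrant-images overlap except along the prescribed planar arcs.
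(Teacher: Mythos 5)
There is a genuine gap at the decisive step. Your reduction to the quarter (or half) disk via the symmetries, and the analysis of the boundary curve through the nodal structure of $\phi_0,\phi_1,\phi_2$, is in the spirit of the paper (which works on $\mathbb{D}_+$ and proves in its Step 1 that $\eta=(\phi_1,\phi_2)$ restricted to $\partial(\mathbb{D}_+)$ is an injective closed curve enclosing a star-shaped domain). But your final step --- ``a minimal disk in $\R^3$ whose boundary is a simple closed curve on the boundary of a convex body, meeting it orthogonally, must be embedded, by foliating $co(\mathcal{E})$ with concentric ellipsoids and applying the maximum principle'' --- is not a valid argument. A sweep-out/maximum-principle argument with a foliation only constrains where the surface can lie or touch the leaves; it cannot rule out self-intersections of an \emph{immersed} disk, and simplicity of the boundary curve does not propagate to interior embeddedness unless the disk is area-minimizing (Meeks--Yau type results), which is not known here: $\Phi$ is merely a critical point produced by the eigenvalue optimization. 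This interior injectivity is exactly the hard point you flag yourself, and your proposed remedy (handling the $\phi_0$-direction and the $(\phi_1,\phi_2)$-plane separately and ``gluing'') is not a proof. In addition, your claim that the boundary curve projects injectively onto a \emph{convex} curve in the $(\phi_0,\phi_1)$-plane is unsupported; the paper only obtains star-shapedness for the projection onto the $(\phi_1,\phi_2)$-plane, and precisely because convexity is unavailable it cannot invoke the classical Rad\'o--Kneser--Choquet theorem.

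The paper's actual mechanism is quite different: it proves that the projection $\eta=(\phi_1,\phi_2):\mathbb{D}_+\to\eta(\mathbb{D}_+)$ is a diffeomorphism, which by the symmetries (with $\phi_0$ odd in $y$) yields injectivity of $\Phi$. After the boundary analysis (Step 1), it shows the Jacobian $\eta_x\wedge\eta_y$ and $\eta\wedge\eta_x$ do not vanish on $\partial(\mathbb{D}_+)$ away from $(\pm1,0)$, using the normal vector, the second fundamental form and Morse-lemma arguments on nodal sets of the second eigenfunctions $\left\langle \eta,v\right\rangle$ (Steps 2--4), and then constructs the perturbation $\eta_\eps=\frac{\eta+\alpha_\eps\eta_y}{1+\eps\phi_0}$, which solves a divergence-form equation $-div(\sigma_\eps\nabla\eta_\eps)=0$, parametrizes a Jordan curve on $\partial(\mathbb{D}_+)$ and has positive boundary Jacobian; the global diffeomorphism theorem of Alessandrini--Nesi for $\sigma$-harmonic maps then applies, and letting $\eps\to 0$ (together with the fact that $n_0$ has a single nodal domain) gives $\eta_x\wedge\eta_y>0$ and injectivity of $\eta$ on $\mathbb{D}_+$. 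Without an ingredient of this type (or an area-minimization hypothesis), your outline cannot close the argument.
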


\begin{proof}
We aim at proving that a projection of the map $\Phi$, the map $\eta:= (\phi_1,\phi_2) : \mathbb{D}_+ \to \eta(\mathbb{D}_+)$ is a diffomorphism. By symmetries, it will prove that $\Phi : \mathbb{D} \to \mathbb{R}^3$ is injective and since it is an immersion, we will obtain that $\Phi$ is an embedding.

\medskip

\textbf{Step 1:} $0 \notin \eta\left( \partial \left({\mathbb{D}_+}\right) \right)$ and $\frac{\eta}{\left\vert \eta \right\vert} : \partial \left({\mathbb{D}_+}\right) \to \mathbb{S}^1$ is a homeomorphism. In other words, $\eta : \partial \left({\mathbb{D}_+}\right) \to \mathbb{R}^2$ is an injective closed curve and up to change the orientation, $\eta\wedge \eta_\theta$ is non-negative on $\mathbb{S}^1_+$ and $ \eta \wedge \eta_x $ is non negative on $[-1,1] \times \{0\}$. In particular, $\eta \left( \partial \left({\mathbb{D}_+}\right) \right)$ encloses a star-shaped domain with respect to $0$.

\medskip

\textbf{Proof of Step 1:} $\phi_1^{-1}(\{0\}) = \{0\}\times (-1,1)$ so that $ \phi_1^{-1}(\{0\}) \cap  \partial \left({\mathbb{D}_+}\right) = \{ (0,0) , (0,1) \} $ and $\phi_2$ does not vanish on this set. Therefore, we can consider $\frac{\eta}{\left\vert \eta \right\vert} : \partial \left({\mathbb{D}_+}\right) \to \mathbb{S}^1$ and prove that it is monotone. In other words, let's prove that for any $v\in \mathbb{S}^1$, and every $x\in \partial \left({\mathbb{D}_+}\right)$ such that $\eta(x) \in D_v $, there is an arc $(a,b)\subset \partial \left({\mathbb{D}_+}\right)$ such that $x \in (a,b)$ and 
\begin{equation*} \forall y \in (a,x), \eta(y) \in H_v^+ \text{ and } \forall y \in (x,b), \eta(y) \in H_v^- \end{equation*}
where 
$$ D_{v} = \{ z \in \R^2 ; \left\langle z, v \right\rangle = 0 \} $$
$$ H_v^{+} = \{ z\in \R^2 ; \left\langle z, v \right\rangle > 0 \}, H_v^{-}  = \{ z\in \R^2 ; \left\langle z, v \right\rangle < 0 \}.$$
We assume that it is not true for some $x \in \partial \left({\mathbb{D}_+}\right)$ and $v\in \mathbb{S}^1$. 

If $x \neq (\pm 1,0)$, $\eta$ is analytic at $x$ and we deduce that
there is an arc $(a,b)\subset\mathbb{S}^1$ such that $x\in (a,b)$ and either for all $y \in (a,b)$, $\eta(y) \in D_v$ or for all $y \in (a,b)\setminus\{x\}$, $\eta(y)\in H_v^{+}$ or for all $y \in (a,b)\setminus\{x\}$, $\eta(y)\in H_v^{-}$. Since $\left\langle\eta,v \right\rangle$ cannot vanish on an arc of $\mathbb{S}^1$, nor on an arc of $[-1,1]\times \{0\}$ by symmetries, up to take $-v$, we assume that for all $y \in (a,b)\setminus\{x\}$, $\eta(y)\in H_v^{+}$. Then, knowing that the nodal set is smooth in the interior of $\mathbb{D}$, $\left\langle \eta,v \right\rangle$ has at least 3 distinct nodal domains at the neighborhood of $x$. By symmetries, $\left\langle \eta,v \right\rangle$ has at least four nodal domains and it is a contradiction.

Now we assume that $x = (\pm 1,0)$. We assume that there is an arc $(a,b)\subset\mathbb{S}^1$ such that $x\in (a,b)$ and either for all $y \in (a,b)\setminus\{x\}$, $\eta(y)\in H_v^{+}$ or for all $y \in (a,b)\setminus\{x\}$, $\eta(y)\in H_v^{-}$. Then, knowing that the nodal set is smooth in the interior of $\mathbb{D}$, $\left\langle \eta,v \right\rangle$ has at least $3$ nodal domains at the neighborhood of $x$ in $\mathbb{D}_+$. By symmetries, $\left\langle \eta,v \right\rangle$ has at least $5$ nodal domains and it is a contradiction. Therefore, such an arc $(a,b)$ does not exist, but by analyticity of $\eta$ on $\partial\left( \mathbb{D}_+\right) \setminus \{ (\pm 1,0) \}$, such an arc $(a,b)$ would exist at the neighborhood of a point $y \in \partial\left( \mathbb{D}_+\right) \setminus \{ (\pm 1,0) \}$ and it is not possible.

In order to conclude step 1, we just notice that $\phi_1$ and $\phi_2$ vanish only twice on $\partial\left( \mathbb{D}_+\right)$, so that the degree of $\frac{\eta}{\left\vert \eta \right\vert} : \partial\left( \mathbb{D}_+\right) \to \mathbb{S}^1$ is $1$ and it is a homeomorphism.

\medskip

\textbf{Step 2:} $\eta_r \wedge \eta_\theta = \eta_x \wedge \eta_y$ does not vanish on $\mathbb{S}^1_+$.

\medskip

\textbf{Proof of Step 2:} We denote $n = e^{-2u} \Phi_x \wedge \Phi_y$ a normal vector of the free-boundary minimal surface. Then $n_0 = e^{-2u}\eta_x \wedge \eta_y$ and $n$ satisfies the equation of harmonic maps into $\mathbb{S}^2$.
Notice that 
$$ \begin{cases} 
n_x = -e e^{-2u} \Phi_x - f e^{-2u}\Phi_y \\
n_y =  -f e^{-2u} \Phi_x + e e^{-2u}\Phi_y
\end{cases}$$
Let $z\in \mathbb{S}^1_+$. Notice in particular that $z\neq (\pm 1,0)$ and $\phi_0(z)\neq 0$. We assume by contradiction that $n_0(z) = 0$. By step 1, $n_0$ has a constant sign at the neighborhood of $z$. Therefore, we must have that $n_{0,\theta}(z) = 0$. Up to a rotation in the set of parametrization, we assume that for any function $f : \mathbb{D}\to \R$, we have $f_r = f_x$ and $f_{\theta} = f_y$ on $\mathbb{S}^1$. By a Taylor expansion,
\begin{equation*}
\begin{split} \Phi(z+h) = & \Phi(z) + h_1 \Phi_x(z)+h_2\Phi_y(z) 
\\ & + \frac{1}{2} \left( (h_1)^2 \Phi_{xx}(z) + 2 h_1 h_2 \Phi_{xy}(z) + (h_2)^2 \Phi_{yy}(z) \right) + o(\left\vert h \right\vert^2 ) 
\end{split} \end{equation*}
Since $n_0(z) = 0$, we have that $\left\langle \nabla \eta(z),\tilde{n}(z) \right\rangle = \left\langle \nabla\Phi(z),n(z) \right\rangle = 0 $ and by the Steklov eigenfunction equation, $\sigma_2 e^{v(z)} \left\langle \eta(z),\tilde{n}(z) \right\rangle =  \left\langle \eta(z),\tilde{n}(z) \right\rangle = 0 $ so that
$$ \left\langle \eta(z+h),\tilde{n}(z) \right\rangle =  \frac{1}{2} \left( \left((h_1)^2 - (h_2)^2\right) e +  2 h_1 h_2 f \right) + o(\left\vert h \right\vert^2 )$$
Since the nodal set of the eigenfunction $\left\langle \eta,\tilde{n}(z) \right\rangle$ has only one branch starting from $z$  (see Step 1), then by the Morse lemma, we must have $e=0$. Since $n_{0,y}(z) = 0$, we obtain that $- f \phi_{0,x}(z) = 0 $. Since by the Steklov eigenfunction equation $\phi_{0,x}(z) = \sigma_1 e^{v(z)} \phi_0(z)$, we obtain that either $\phi_0(z) = 0$, either $f=0$. Since $\phi_0(z) \neq 0$ we obtain $f=0$. Now, since $\left\langle \eta,\tilde{n}(z) \right\rangle$ is harmonic and smooth until the boundary of $\mathbb{D}$, it has a harmonic extension at the neighborhood of $z$. Since $e = f = 0$ at $z$, the nodal set of $\left\langle \eta,\tilde{n}(z) \right\rangle$ has a singularity of multiplicity higher than $3$ at $z$, so that the nodal set $\left\langle \eta,\tilde{n}(z) \right\rangle$ has at least two branches starting from $z$ in $\mathbb{D}$, and it is a contradiction.

\medskip

\textbf{Step 3:} $ \eta \wedge \eta_x $ does not vanish on $(-1,1)\times \{0\}$.

\medskip

\textbf{Proof of Step 3:} We assume that for some $z\in (-1,1)\times \{0\}$, $\eta(z) \wedge \eta_x(z) = 0$. By symmetries, we know that $\phi_{0,x}(z) = 0$ and $\eta_y(z) = 0$. Therefore, if we denote $n = (n_0,\tilde{n})$, and $\varphi = \left\langle \eta, \tilde{n}(z )\right\rangle$, we have obtain that $\nabla\varphi(z) =0$ and $\varphi(z) = 0$ so that the nodal set of $\varphi$ is singular at $z$: it is a contradiction.

\medskip

\textbf{Step 4:} Setting $ e = \left\langle \Phi_{xx},n \right\rangle = - \left\langle \Phi_{yy},n \right\rangle $ and $\tilde{e} = \left\langle \Phi_{xxx},n \right\rangle = - \left\langle \Phi_{yyx},n \right\rangle$, where we denote $n = e^{-2u} \Phi_x \wedge \Phi_y$, then $e$ or $\tilde{e}$ never vanish in $\{ (\pm 1, 0) \}$

\medskip

\textbf{Proof of Step 4:} We have by a Taylor extension that for $z \in \{(\pm1,0)\}$
\begin{equation*}
\eta(z+h) = \sum_{k=0}^{+\infty} \frac{1}{k!} \cos(k\theta) r^k  \eta^{(k)_x}(z)
\end{equation*}
for $h = r (\cos\theta,\sin\theta)$, because $\eta$ is harmonic and satisfies $\eta(x,-y)  = \eta(x,y)$.
Now, thanks to the Steklov equation, $\eta_{x}(z)$ and $\eta(z)$ are parallel vectors and are orthogonal to $\tilde{n}(z)$, where $n(z) = (n_0(z),\tilde{n}(z))$, since $n_0(z) = 0$. Then, if $e=\tilde{e}=0$,
$$ \left\langle  \eta(z+h),\tilde{n}(z) \right\rangle = c \cos(k\theta)r^k  + o(r^k ) $$
for a constant $c\neq 0$ and $k\geq 4$ and it is impossible because of the structure of the nodal set of the second eigenfunction $\left\langle \eta,\tilde{n}(z)\right\rangle$.

\medskip

\textbf{Step 5:} We still denote $\Phi = (\phi_0,\eta) : \mathbb{D}_{r_0} \to \R^2$ a harmonic extension of $\Phi$ where $r_0 >1$. There is $\eps_0>0$ such that for $\eps \leq \eps_0$, there is a smooth function $\eta_\eps : \mathbb{D}_{r_0} \to \mathbb{R}^2$ for $r_0>1$,
satisfying that 
\begin{itemize}
\item $\eta_\eps : \partial\left( \mathbb{D}_+ \right) \to \mathbb{R}^2$ parametrizes a Jordan curve,
\item $\eta_{\eps,x} \wedge \eta_{\eps,y} $ is positive on $\partial\left( \mathbb{D}_+\right)$, 
\item there is a positive smooth function $\sigma_\eps$ such that
$$ -div\left( \sigma_\eps \nabla \eta_{\eps} \right) = 0 \text{ in } \mathbb{D}_+$$
\item $\eta_{\eps}$ converges to $\eta$ in $\mathcal{C}^2\left( \mathbb{D}_{r_0} \right)$ as $\eps \to 0$.
\end{itemize}

\medskip

\textbf{Proof of Step 5:} By Step 1 and Step 2, $\eta_x \wedge \eta_y = \eta_r \wedge \eta_\theta$ is positive on $\mathbb{S}^1_+$. By a straightforword computation, using the Steklov eigenvalue equation
\begin{equation*} 
\begin{split}
\tilde{\eta}_x \wedge \tilde{\eta}_y = \tilde{\eta}_r \wedge \tilde{\eta}_\theta = \frac{1 + \eps \left(1-\frac{\sigma_1}{\sigma_2}\right)\phi_0 }{\left(1+\eps \phi_0\right)^3} \eta_r \wedge \left( \eta_\theta - \frac{\eps \phi_{0,\theta}}{1+\eps \phi_0} \eta \right) 
\\ = \frac{1 + \eps \left(1-\frac{\sigma_1}{\sigma_2}\right)\phi_0 }{\left(1+\eps \phi_0\right)^3} \eta_r \wedge \eta_\theta > 0 
\end{split} \end{equation*}
on $\mathbb{S}^1_+$ for $\eps>0$ small enough. Now, we compute $\tilde{\eta}_x \wedge \tilde{\eta}_y$ on $[-1,1]\times \{0\}$. By the symmetries, we know that $\eta_y = 0$ and $\phi_0 = \phi_{0,x} = 0$. Then
$$ \tilde{\eta}_x \wedge \tilde{\eta}_y = \eta_x \wedge \left(-\eps\phi_{0,y}\eta\right) = \eps \phi_{0,y} \left( \eta \wedge \eta_x \right) $$
on $[-1,1]\times \{0\}$. Up to choose $-\phi_0$, we assume that $\phi_0$ is positve on $\mathbb{D}_+$ so that by the Hopf lemma, $\phi_{0,y}>0$. By step 1 and step 3, $\tilde{\eta}_x \wedge \tilde{\eta}_y$ is then positive on $(-1,1)\times \{0\}$.

Now, we assume that $e(0,1) \neq 0$ and we set $\eta_\eps = \frac{\eta+\alpha_\eps \eta_y}{1+\eps \phi_0}$ where $\alpha_{\eps}>0$ and $\alpha_\eps \to 0$ as $\eps \to 0$. By similar computations as for $\tilde{\eta}$,
$$ \eta_{\eps,x} \wedge \eta_{\eps,y}   \frac{1 + \eps \left(1-\frac{\sigma_1}{\sigma_2}\right)\phi_0 }{\left(1+\eps \phi_0\right)^3} \eta_r \wedge \eta_\theta + \alpha_{\eps} \left( \eta_{xy}\wedge \eta_y + \eta_x \wedge \eta_{yy} \right) + O\left( \alpha_{\eps}^2 + \eps^2 \right) $$
on $\mathbb{S}^1_+$ and
$$ \eta_{\eps,x} \wedge \eta_{\eps,y} = \eta_x \wedge \left(-\eps\phi_{0,y}\eta\right) = \eps \phi_{0,y} \left( \eta \wedge \eta_x \right) + \alpha_{\eps}   \eta_x \wedge \eta_{yy} + O\left( \alpha_{\eps}^2 + \eps^2 \right) $$
on $[-1,1]\times \{0\}$, knowing that $\eta_{xy} = 0$ on $[-1,1]\times \{0\}$. Notice that in both formula, $O\left( \alpha_{\eps}^2 + \eps^2 \right)$ is uniform in $\mathbb{D}$. Moreover, we have by the symmetries $\phi_1(-x,y)=-\phi_1(x,y)$ and $\phi_2(-x,y)=\phi_2(x,y)$ that
$$ \left(\eta_x \wedge \eta_{yy}\right)(-1,0) = \left(\eta_x \wedge \eta_{yy}\right)(1,0)  $$
and that $\eta_x \wedge \eta_{yy}(1,0) = -e(1,0) \eta_x(1,0)\wedge \tilde{n}(1,0) \neq 0$. Then, up to take $-\phi_1$ instead of $\phi_1$ and choosing $\alpha_\eps$ such that $\alpha_{\eps} = o(\eps) $ and $\eps^2 = o(\alpha_\eps)$, then there is $\eps_0>0$ such that for any $\eps \leq \eps_0$, $\eta_{\eps} : \mathbb{D}_{r_0} \to \mathbb{R}^2$ is a local diffeomorphism at the neighbourhood of any point of $\partial{\mathbb{D}_+}$ and $\eta_\eps : \partial\left( \mathbb{D}_+ \right) \to \mathbb{R}^2$ parametrizes a Jordan curve. Finally, setting $\sigma_\eps = (1+\eps \phi_0)^2$, we have
\begin{equation*}
\begin{split}
 -div\left( \sigma_\eps \nabla \eta_\eps \right) =& -div\left( (1+\eps \phi_0)\nabla\left(\eta + \alpha_\eps \eta_y\right) - \left(\eta + \alpha_\eps \eta_y\right) \nabla\left((1+\eps \phi_0)\right)   \right) \\
 =& (1+\eps \phi_0) \Delta\left( \eta + \alpha_\eps \eta_y \right) - \left(\eta + \alpha_\eps \eta_y\right)\Delta(1+\eps \phi_0) = 0
\end{split} 
 \end{equation*}
 and the expected equation is satisfied. Notice that if $e=0$, then by Step 4, $\tilde{e}\neq 0$ and the same argument occurs by setting $\eta_\eps = \frac{\eta+\alpha_\eps \eta_{xy}}{1+\eps \phi_0}$.

\medskip

\textbf{Conclusion:} 
By Alessandrini and Nesi \cite{alessandrininesi2}, we obtain that $\eta_{\eps} : \Omega_{\eps} \to \eta_{\eps}(\Omega_{\eps})$ is a diffeomorphism, where $\Omega_{\eps}$ is chosen as a smooth domain such that $\mathbb{D}_+ \subset \Omega_{\eps} \subset \mathbb{D}_{r_0}$ and $\eta_{\eps}: \partial\Omega_\eps \to \R^2$ is a parametrization of a Jordan curve. Then $\eta_{\eps} : \mathbb{D}_+ \to \eta(\mathbb{D}_+)$ is a diffeomorphism. Letting $\eps \to 0$, we obtain that $\eta_{x}\wedge \eta_y \geq 0$ on $\mathbb{D}_+$. The zero set of $\eta_x \wedge \eta_y$ is the same as the zero set of the first coordinate $n_0$ of the normal vector to the surface, which satisfies an eigenfunction equation $ \Delta n_0 = \left\vert \nabla n \right\vert^2 n_0 $ (because $n$ is a harmonic map into the sphere). $n_0$ has only one nodal domain. Then $\eta_{x}\wedge \eta_y >0$ on $\mathbb{D}_+$. 
Then $\eta_{\eps}^{-1}$ converges in $\mathcal{C}^2(K)$ for any compact subset $K$ of $\eta\left(\mathbb{D}_+\right)$ to a function $\psi : \eta\left(\mathbb{D}_+\right) \to \mathbb{D}_+$ that is the inverse function of $\eta$. In particular, $\eta$ is injective on $\mathbb{D}_+$. By symmetries, $\Phi$ is injective on $\mathbb{D}$ and we obtain the expected result.
\end{proof}

\section{Optimization of combinations of Steklov eigenvalues} \label{sectionoptimizationsymmetries}

In the current section, we prove Theorem \ref{theomain}. In particular, we gather the previous techniques for maximization of combinations of Steklov eigenvalues \cite{Pet21} and \cite{Pet22}, and adapt them for the first time on a restriction of the admissible conformal factors at the boundary $e^{v}$ to symmetric ones on a surface $\Sigma$. For simplicity, we choose to write it in our case $\Sigma = \mathbb{D}$ and
$$ \forall x,y \in \mathbb{S}^1, e^{v(x,y)} = e^{v(-x,y)} = e^{v(x,-y)} $$
but it appears that such a method for symmetry constraints can be setted in a general framework.

\subsection{Description of critical metrics for combinations of first and second Steklov eiegenvalues}

We denote $\sigma_k(f)$ is the $k$-th non zero Steklov eigenvalue with respect to the density $f:\mathbb{S}^1 \to \mathbb{R}^*_+$ on $\mathbb{D}$, that is the restriction at the boundary of the density of the measure associated to some metric conformal to the flat one on the disk. We set $\bar{\sigma}_k(f) = \sigma_k(f) L_f(\mathbb{S}^1)$, where $L_f\left(\mathbb{S}^1\right) = \int_{\mathbb{S}^1} f d\theta$ is the length of the boundary with respect to $f$.  We set $ V = \mathcal{C}^0\left( \mathbb{S}^1\right)$ and
$$ X = \{f \in \mathcal{C}^0( \mathbb{S}^1), f>0 \text{ and } \forall x,y \in \mathbb{S}^1, f(x,y) = f(-x,y) = f(x,-y) \} $$ 
and for $f\in X$ we set
$$ E(f):=  F\left(\bar\sigma_1(f),\cdots, \bar\sigma_m(f) \right)$$
where $F : \mathbb{R}^m \to \mathbb{R}_+$ is a $\mathcal{C}^1$ function such that for all $i\in \{1,\cdots,m\}$, $\partial_i F \leq 0$ everywhere. Notice that in the current paper, $F = h_{s,t}$ only depends on first and second Steklov eigenfunctions.  

We denote $\partial E(f)$ the Clarke subdifferential of $E$ at $f$ (see \cite{PT23}). We proved in \cite{PT23}

\begin{equation} \label{eqsubsteklov}
\begin{split}
 \partial E(f) \subset \overline{co}_{(\phi_1,\cdots,\phi_m) \in \mathcal{O}_m} \left\{   \sum_{i=1}^m d_i \bar\sigma_i(f) \left( \frac{1}{\int_{\mathbb{S}^1} f} - \left(\phi_i\right)^2 \right) \right\} 
 \end{split},
\end{equation}
where $(\phi_1,\cdots,\phi_m)$ lies in the set $\mathcal{O}_{m}(f)$ of $L^2(\partial\Sigma, f d\theta)$-orthonormal families where $\phi_i \in E_i(f)$ ($E_i(f)$ is the set of $i$-th eigenfunctions) and $d_i =  \partial_i F\left(\bar\sigma_1(f),\cdots,\bar\sigma_m(f) \right)  \leq 0$. Notice that in this case the subdifferential is a space of functions on $\mathbb{S}^1$, while it is defined as a subspace of $V^\star $, the set of Radon measures on $\mathbb{S}^1$: here, we identify a function $\psi$ to the measure $\psi d\theta$ on $ \mathbb{S}^1$.

From this (see also \cite{Pet21} or Proposition \ref{prop:constructPSK} for the equivariant context in the next subsection with $\theta_\eps = 0$) and the multiplicity results by \cite{jammes} for the first and second eigenvalues of the disk (and Proposition \ref{propdiffeodim2} and Proposition \ref{propembedding} for the embedding part), we obtain in our case $F=h_{s,t}$ that the critical densities $e^u$ (densities such that $0 \in  \partial E(e^u)$) satisfy:
\begin{prop} \label{propcritical} If $e^u$ is a critical density of $h_{s,t}(\bar{\sigma}_1,\bar{\sigma}_2) = \left( \bar{\sigma}_1^{-s} + t \bar{\sigma}_2^{-s} \right)^{\frac{1}{s}}$, then there is a free boundary minimal immersion $\Phi =(\phi_0,\phi_1,\phi_2) : \mathbb{D} \to co(\mathcal{E}_\sigma)$, where $\sigma = (\bar{\sigma}_1(e^u),\bar{\sigma}_2(e^u),\bar{\sigma}_2(e^u))$, $\phi_0$ is a Steklov eigenfunction associated to $\sigma_1(e^u)$ and $\phi_1$ and $\phi_2$ are associated to $\sigma_2(e^u)$ such that $(\phi_0,\phi_1,\phi_2)$ are independent function in $L^2(\mathbb{S}^1,e^ud\theta)$ or $\phi_2 = 0$ and $(\phi_0,\phi_1)$ are independent. Moreover,
$$ \int_{\mathbb{S}^1} \phi_0^2 e^u d\theta = \frac{ \bar{\sigma}_1(e^u)^{-s-1} }{f_{s,t}(\bar{\sigma}_1(e^u),\bar{\sigma}_2(e^u))} \int_{\mathbb{S}^1} e^u d\theta $$
and
 $$ \int_{\mathbb{S}^1} \left(\phi_1^2+ \phi_2^2\right) e^u d\theta = \frac{ t  \bar{\sigma}_2(e^u)^{-s-1} }{f_{s,t}(\bar{\sigma}_1(e^u),\bar{\sigma}_2(e^u))} \int_{\mathbb{S}^1} e^u d\theta. $$
If in addition $f$ is symmetric with respect to the two axes of $\mathbb{S}^1$, then $\Phi$ is an embedding.
\end{prop}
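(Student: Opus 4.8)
\textbf{Proof plan for Proposition \ref{propcritical}.}

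The plan is to unwind the subdifferential inclusion \eqref{eqsubsteklov} specialized to $F = h_{s,t}$ and to read off the geometric consequences from the ``balanced'' structure of the optimal family, then to invoke the results of Section \ref{sectionplanarellipse} and Section \ref{sectionembedding} for the embeddedness statement. First I would note that since $e^u$ is critical, $0 \in \partial E(e^u)$, so by \eqref{eqsubsteklov} there exist $L^2(\mathbb{S}^1,e^ud\theta)$-orthonormal families $(\phi_1^{(\alpha)},\phi_2^{(\alpha)})$ with $\phi_i^{(\alpha)}$ an eigenfunction for $\sigma_i(e^u)$ and convex weights $\lambda_\alpha \geq 0$, $\sum_\alpha \lambda_\alpha = 1$, such that, writing $d_i = \partial_i h_{s,t}(\bar\sigma_1,\bar\sigma_2)$,
\begin{equation*}
\sum_\alpha \lambda_\alpha \sum_{i=1}^2 d_i \bar\sigma_i(e^u)\left( \frac{1}{\int_{\mathbb{S}^1} e^u} - (\phi_i^{(\alpha)})^2 \right) = 0 \quad \text{on } \mathbb{S}^1 .
\end{equation*}
Setting $\phi_0$ proportional to $\sqrt{-d_1\bar\sigma_1}$ times an average of the first eigenfunctions appearing (here one uses that $\sigma_1$ is simple on the disk by \cite{jammes}, so all the $\phi_1^{(\alpha)}$ coincide up to sign, hence $\sum_\alpha \lambda_\alpha(\phi_1^{(\alpha)})^2 = (\phi_1)^2$ is a single square) and, since $\sigma_2$ has multiplicity at most $2$ by \cite{jammes}, writing $\sum_\alpha \lambda_\alpha\big((\phi_1^{(\alpha)})^2 + (\phi_2^{(\alpha)})^2\big)$ as $\phi_1^2 + \phi_2^2$ for an orthogonal pair (or a single function, in which case we set $\phi_2 = 0$) in the two-dimensional eigenspace — this reduction is possible because a convex combination of squares of unit vectors in a $2$-dimensional space, summed against the quadratic form, can be diagonalized — I obtain that
\begin{equation*}
-d_1 \bar\sigma_1(e^u)\,\phi_0^2 \;-\; d_2 \bar\sigma_2(e^u)\,(\phi_1^2+\phi_2^2) \;=\; c \quad\text{on } \mathbb{S}^1
\end{equation*}
for the constant $c = \big(-d_1\bar\sigma_1 - d_2\bar\sigma_2\big)/\int_{\mathbb{S}^1} e^u$. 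Rescaling the $\phi_i$ by the appropriate constants turns this into $\sigma_1\psi_0^2 + \sigma_2(\psi_1^2+\psi_2^2) = 1$ on $\mathbb{S}^1$ with $\sigma_1 = \bar\sigma_1(e^u)$, $\sigma_2 = \bar\sigma_2(e^u)$; together with the Steklov equations $\Delta\psi_i = 0$, $\partial_\nu\psi_i = \sigma_i \psi_i e^u/(\text{length-normalization})$ this is precisely the statement that $\Psi = (\psi_0,\psi_1,\psi_2)$ is a free boundary harmonic map into $\mathcal{E}_\sigma$, hence — by the Hopf-differential / conformality argument recalled in the introduction — a (possibly branched) free boundary minimal immersion into $co(\mathcal{E}_\sigma)$, and by the two propositions of Section \ref{sectionplanarellipse} combined with Claim \ref{clnobranchedpointsboundary} (and in the non-planar case the branch-point-free statements in Section \ref{sectionembedding}) it has no branch points. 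The mass identities for $\int \phi_0^2 e^u$ and $\int(\phi_1^2+\phi_2^2)e^u$ then follow by computing the normalization constants $c$ explicitly: differentiating $h_{s,t}(\sigma_1,\sigma_2) = (\sigma_1^{-s}+t\sigma_2^{-s})^{1/s}$ gives $d_1 = \partial_1 h = \sigma_1^{-s-1} h^{1-s}$ up to the common factor, $d_2 = t\sigma_2^{-s-1} h^{1-s}$, so that $-d_i\bar\sigma_i$ is proportional to $\bar\sigma_i^{-s} \cdot \big(\text{stuff}\big)$ and the ratio $\int\phi_0^2 e^u : \int(\phi_1^2+\phi_2^2)e^u$ equals $\bar\sigma_1^{-s} : t\bar\sigma_2^{-s}$, which upon normalizing the sum (note $\psi_0^2+\cdots$ need not have total integral $\int e^u$; rather the unnormalized $\phi_i$ are $L^2$-orthonormal, so $\int\phi_i^2 e^u$ sum appropriately) yields exactly the displayed formulas with denominator $f_{s,t}(\bar\sigma_1,\bar\sigma_2)$.

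The main obstacle I anticipate is the passage from the \emph{convex combination} of orthonormal families in \eqref{eqsubsteklov} to a \emph{single} family $(\phi_0,\phi_1,\phi_2)$ with the coordinate-wise squared-sum identity holding pointwise on $\mathbb{S}^1$. For $\phi_0$ this is immediate from simplicity of $\sigma_1$, but for the second eigenspace one must argue that although different $\alpha$ contribute different orthonormal pairs spanning (a subspace of) the same $2$-dimensional eigenspace $E_2$, the function $\sum_\alpha\lambda_\alpha\big((\phi_1^{(\alpha)})^2+(\phi_2^{(\alpha)})^2\big)$ is itself of the form $\psi_1^2+\psi_2^2$ for some orthogonal pair in $E_2$ — this is a statement about sums of rank-one projections: $\sum_\alpha\lambda_\alpha v^{(\alpha)}(v^{(\alpha)})^\top$ is a positive symmetric operator on $E_2$ of trace $1$ (when the pair is genuinely $2$-dimensional, trace $2$), and diagonalizing it produces the desired pair, with the degenerate case (operator of rank $1$, i.e. all contributions collinear) giving $\phi_2 = 0$. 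The rest — the explicit constants, invoking the Hopf differential to get conformality, and citing Propositions \ref{propdiffeodim2}, \ref{propembedding} for embeddedness in the symmetric case — is routine given the machinery already assembled in the excerpt.
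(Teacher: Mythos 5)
Your plan is essentially the paper's own argument: the paper justifies Proposition \ref{propcritical} exactly by specializing the subdifferential inclusion \eqref{eqsubsteklov} (from \cite{PT23}, \cite{Pet21}) to $F=h_{s,t}$, using the multiplicity bounds of \cite{jammes} to confine everything to a (at most) two-dimensional second eigenspace, diagonalizing the convex combination of squares to extract one orthogonal pair (possibly degenerate, giving $\phi_2=0$), reading off the ellipsoid constraint and the mass identities from $\partial_i h_{s,t}$, and invoking Propositions \ref{propdiffeodim2} and \ref{propembedding} for the embedding statement. Two slips to correct. First, \cite{jammes} gives multiplicity at most $2$ for $\sigma_1$ on the disk, not simplicity (the round disk has a double first Steklov eigenvalue); if $\sigma_1(e^u)=\sigma_2(e^u)$ your reduction of $\sum_\alpha\lambda_\alpha(\phi_1^{(\alpha)})^2$ to a single square fails as written, but running your rank-one-projection diagonalization on the joint two-dimensional eigenspace produces at most two independent coordinates and lands in the alternative $\phi_2=0$ of the statement, so the conclusion survives with a small modification. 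Second, the ratio of the boundary masses is $\int\phi_0^2e^u:\int(\phi_1^2+\phi_2^2)e^u=\left\vert\partial_1h_{s,t}\right\vert:\left\vert\partial_2h_{s,t}\right\vert=\bar\sigma_1^{-s-1}:t\,\bar\sigma_2^{-s-1}$, not $\bar\sigma_1^{-s}:t\,\bar\sigma_2^{-s}$ (the latter is the ratio of the weighted masses $\bar\sigma_1\int\phi_0^2e^u:\bar\sigma_2\int(\phi_1^2+\phi_2^2)e^u$, i.e. of $\left\vert\partial_i h_{s,t}\right\vert\bar\sigma_i$); since $\left\vert\partial_1h\right\vert\bar\sigma_1+\left\vert\partial_2h\right\vert\bar\sigma_2=h_{s,t}$ one gets precisely the displayed formulas with $f_{s,t}=\bar\sigma_1^{-s}+t\bar\sigma_2^{-s}=h_{s,t}^{\,s}$, and the exponent $-s-1$ is the one actually used later (the identity $t=\frac{\int(x_1^2+x_2^2)}{\int x_0^2}\,p_{s,t}^{-(s+1)}$ in Section \ref{sectionotherarguments}), so this is worth getting right.
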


\subsection{Construction and convergence of almost critical equivariant minimizing sequences}

The current subsection describes in our equivariant setting the construction of an almost critical (or Palais-Smale-like) minimizing sequence with the methods in \cite{Pet22} in order to obtain Theorem \ref{theo:MWsconvergence}. We define for a function $F : \R^2 \to \R_+$ such that $\partial_i F \leq 0$
$$ I_F^{eq}(\mathbb{D}) := \inf_{e^u \in X^{eq}(\mathbb{D})} F( \bar{\sigma}_1(e^u), \bar{\sigma}_2(e^u) ) $$
where $eq$ stands for "equivariant" and denoting $\mathcal{C}^0_{>0}(S)$ the set of positive continuous functions on a curve $S$,
$$ X^{eq}(\mathbb{D}) = \{f \in \mathcal{C}^0_{>0}( \mathbb{S}^1),  \forall (x,y) \in \mathbb{S}^1, f(x,y) = f(-x,y) = f(x,-y) \} $$ 
and
$$ I_F^{eq}(\mathbb{D}\sqcup \mathbb{D}) := \inf_{e^u \in X^{eq}\left(\mathbb{D}\sqcup \mathbb{D}\right)} F( \bar{\sigma}_1(e^u), \bar{\sigma}_2(e^u) )  $$
where denoting $\Sigma := \left(\mathbb{D}\right)_1 \sqcup \left(\mathbb{D}\right)_2$ the disjoint union of two copies of flat disks and its boundary $\partial \Sigma = \left(\mathbb{S}^1\right)_1 \sqcup \left(\mathbb{S}^1\right)_2$,
$$X^{eq}\left(\Sigma\right) := \left\{ f \in \mathcal{C}^0_{>0}\left( \partial \Sigma\right), \begin{cases}f_{\left(\mathbb{S}^1\right)_1} = f_{\left(\mathbb{S}^1\right)_2} \\ \forall (x,y) \in \left(\mathbb{S}^1\right)_i, f(x,y) = f(-x,y)\end{cases}  \right\}  $$

\begin{theo} \label{theo:MWsconvergence}
For any minimizing sequence $e^{u_n} d\theta$ for $I_F^{eq}(\mathbb{D})$, we have up to the extraction of a subsequence that $\left(e^{u_n} d\theta \right)$ MW$\star$-bubble-tree-converges to a minimizer of $I_F^{eq}(\mathbb{D} \sqcup \mathbb{D})$.
The conformal factors of the minimizer are positive and smooth.
Moreover, if 
$$ I_F^{eq}(\mathbb{D} \sqcup \mathbb{D}) < I_F^{eq}(\mathbb{D}) $$ 
then up to the extraction of a subsequence, any minimizing sequence MW$\star$-converges to a minimizer of to a positive and smooth conformal factor on $\Sigma$.
\end{theo}
Here MW$\star$ convergence denotes the weak-star convergence with respect to the dual space of Radon measures. The definition of MW$\star$ bubble-tree-convergence is given in \cite{Pet22}, Definition 2.1.

As in \cite{Pet22}, we will work with generalized eigenvalues defined on $\overline{X}^{eq}(\mathbb{D})$ which is the completion of 
$\{ (\varphi,\psi)\mapsto \int_{\mathbb{S}^1} \varphi \psi e^{u} d\theta ; e^u \in X^{eq}(\mathbb{D})  \}$ 
with respect to the following norm on continuous non negative bilinear forms $\beta : H^1(\mathbb{D})\times H^1(\mathbb{D}) \to \R_+$
$$\Vert \beta \Vert_{g}  := \sup_{(\varphi,\psi)\in H^1(\mathbb{D})\times H^1(\mathbb{D})} \frac{\vert \beta(\varphi,\psi) \vert}{\Vert \varphi \Vert_{H^1(g)}\Vert \psi \Vert_{H^1\left(g\right)}}$$
where for a metric $g$ on $\mathbb{D}$,
$$ \Vert \varphi \Vert_{H^1(g)}^2 := \int_{\mathbb{D}} \vert \nabla \varphi \vert^2_g dA_g + \int_{\mathbb{S}^1} \varphi^2 dL_g.$$
We proved in \cite{Pet22} that generalized eigenvalues are Lipschitz, on $\overline{X}^{eq}(\mathbb{D})$ (see Proposition 1.1), that we have existence of eigenfunctions (consequences of Proposition 1.3) and that we can compute their right derivative at $\beta \in \overline{X}^{eq}(\mathbb{D})$ with respect to variations  $b \in \overline{X}^{eq}(\mathbb{D})$ ($\beta+ t b$ as $t \searrow 0$). Indeed, denoting by $E_k(\beta)$ the set of $k$-th generalized Steklov eigenfunctions with respect to $\beta$ and
$$ i(\beta,k) := \min\{ i \in \mathbb{N}^* ; \sigma_i(\beta) = \sigma_k(\beta) \}$$
$$ I(\beta,k) := \max\{ i \in \mathbb{N}^* ; \sigma_i(\beta) = \sigma_k(\beta) \}$$
we have

\begin{prop} \label{prop:firstderivative}
For $\beta \in  \bar{X}$, and $b \in  \bar{X}$,
\begin{equation} \label{eq:firstderivativeminmax}
\begin{split} \lim_{t \searrow 0 } \frac{\bar{\sigma}_k(\beta+tb) - \bar{\lambda}_k(\beta)}{t} = & \bar{\sigma}_k(\beta) \left( b(1,1) - \min_{ V \in \mathcal{G}_{k-i(k)+1}(E_k(\beta)) } \max_{\phi \in V\setminus \{0\}} \frac{b(\phi,\phi)}{\beta(\phi,\phi)} \right) \\ 
= & \bar{\sigma}_k(\beta) \left( b(1,1) - \max_{ V \in \mathcal{G}_{I(k)-k+1}(E_k(\beta)) } \min_{\phi \in V\setminus \{0\}}   \frac{b(\phi,\phi)}{\beta(\phi,\phi)} \right)
\end{split}
\end{equation}
In addition there is an orthonormal basis $(\phi_i,\cdots,\phi_{I})$ with respect to $\beta$, which is orthogonal with respect to $b$ such that for any $i \leq j \leq I$, 
\begin{equation}\label{eq:eigenbasissymm}\forall (x,y)\in \mathbb{D}, \left(\phi_j(x,y)\right)^2 = \left(\phi_j(-x,y)\right)^2= \left(\phi_j(x,-y)\right)^2 \end{equation}
and  
$$ \lim_{t \searrow 0 } \frac{\bar{\sigma}_k(\beta+tb) - \bar{\lambda}_k(\beta)}{t} = \bar{\sigma}_k(\beta+tb) \left( b(1,1) - b(\phi_k,\phi_k) \right) $$
\end{prop}

The proof of \eqref{eq:firstderivativeminmax} in Proposition \ref{prop:firstderivative} is given in \cite{Pet22}, Proposition 1.4. The second part of the proposition gives an extra property in the equivariant version. The min-max is a finite dimensional characterization of eigenvalues of the endomorphism $L$ endowed by $b$ with respect to the scalar product $\beta$. If $\phi \in E_k(\beta)$ is an eigenfunction of $L$, then $(x,y)\mapsto \phi(x,-y)$ and $(x,y)\mapsto \phi(-x,y)$ are also eigenfunctions of $L$ since $\beta$ and $b$ are equivariant bilinear forms. Then by symmetrizations, we can construct an orthonormal basis of eigenfunctions of $L$ such that \eqref{eq:eigenbasissymm} holds.

We then deduce the following proposition
\begin{prop} \label{prop:constructPSK}
For any $\eps>0$, we let $e^{u_\eps}$ be a conformal factor in $X^{eq}(\mathbb{D})$, and $g_\eps:= e^{2u_\eps} g_{\mathbb{D}}$ where $e^{2u_\eps}$ is a positive continuous extension to $\mathbb{D}$ of $e^{2u_\eps}$ such that
$$ E(g_\eps,e^{u_\eps}d\theta) \leq \inf_{\beta\in \overline{X^{eq}(\mathbb{D})}} E(\beta) + \eps^2. $$ 
Then, there is a $(PS)^{eq}_2$ sequence $(\beta_\eps,\Phi_\eps,g_\eps)$ as $\eps\to 0$.
\end{prop}
Here, we define the $(PS)_2^{eq}$ sequences as in \cite{Pet22} $(K=2)$
\begin{defi} \label{def:PS} Let $\beta_\eps \in \overline{X}^{eq}(\mathbb{D})$, $\Phi_{\eps} : \mathbb{D} \to \mathbb{R}^{m_\eps}$ be a sequence of maps with $(m_\eps)_{\eps>0} \in \left(\mathbb{N}^*\right)^{\R_+^*}$, $g_\eps := e^{2u_\eps} g_{\mathbb{D}}$ a family of metrics conformal to $g_{\mathbb{D}}$ such that $e^{u_\eps} \in X^{eq}(\mathbb{D})$ and $K \in \mathbb{N}^\star$. We say that $(\beta_\eps,\Phi_\eps,g_\eps)$ satisfies the equivariant Palais-Smale assumption (with eigenvalue indices bounded by $K$) $(PS)^{eq}_{K}$ as $\eps\to 0$, if
\begin{itemize}
\item The diagonal terms of $\sigma_{\eps} := diag(\sigma_{1}^{\eps},\cdots, \sigma_{m_{\eps}}^{\eps})$ are the $m_\eps$ first Steklov eigenvalues associated to $\beta_\eps$ such that $\sigma_{1}^{\eps}\leq \cdots \leq \sigma_{m_{\eps}}^{\eps} = \sigma_K^\eps$ where $\sigma_K^\eps$ is the $K$-th eigenvalue.
\item $ \Delta_g \Phi_{\eps} = \beta_\eps\left( \sigma_{\eps} \Phi_{\eps} , .\right)$, where $\beta_\eps\left( \sigma_{\eps} \Phi_{\eps} , . \right) : H^1(\mathbb{D})^{m_\eps} \to \R$
\item $L_\eps(1) = L_\eps\left( \left\vert \Phi_{\eps} \right\vert_{\sigma_{\eps}}^2 \right) = \int_{\mathbb{D}}\left\vert \nabla\Phi_{\eps} \right\vert^2 dA_{\mathbb{D}} =  1$ where we denote $L_\eps$ the linear form associated to $\beta_\eps$
\item For $i\in \{1,\cdots,m_\eps\}$, $t_i^{\eps} = L_\eps \left( \left(\phi_i^\eps\right)^2 \right)$ and $\sum_{i=1}^{m_\eps} \sigma_{i}^\eps t_i^\eps  = 1$ and we have that $\sigma_i^\eps t_i^\eps \to 0$ as $\eps \to 0$ for any $i \in \{1,\cdots,m_\eps\}$ such that $\sigma_i^\eps \to 0$ as $\eps \to 0$.
\item $\left\vert \Phi_\eps \right\vert_{\sigma_{\eps}}^2 \geq_{a.e} 1 - \theta_\eps^2$ in $\partial\Sigma$ where $\Vert \theta_{\eps} \Vert_{H^1(g_\eps)}^2 \leq \eps$
\item $\Vert \beta_\eps - e^{u_\eps}dL_g \Vert_{g_\eps} \leq \eps$
\end{itemize}
\end{defi}

The proof of Proposition \ref{prop:constructPSK} is similar to the proof of Proposition 1.5 in \cite{Pet22}. The key to rewrite the proof in the equivariant context is to use the second part of Proposition \ref{prop:firstderivative}: theanks to that property, the separation argument can be writen in space of $L^2$-functions $f$ such that 
$$\forall (x,y)\in \mathbb{D}, f(x,y)=f(-x,y) = f(x,-y).$$

From Proposition 3.1 in \cite{Pet22} and Proposition \ref{prop:constructPSK}, we then obtain Theorem \ref{theo:MWsconvergence}.

\subsection{Proof of Theorem \ref{theomain}} \label{sectionotherarguments}

Thanks to the previous subsection, we have existence of a maybe disconnected minimizer for $g\mapsto F_{s,t}(g)$ under symmetry constraints. 
Thanks to Theorem \ref{theotestfunction}, involving symmetric test functions, there is in fact at most one connected component at the limit by comparison of the energy between one disk and two disjoint disks. By symmetry assumptions, if there is one concentration point, we must have that $e^{u_\eps} d\theta $ weak $\star$ converges to a sum $\frac{\delta_1 + \delta_{-1}}{2}$ or $\frac{\delta_i + \delta_{-i}}{2}$ which is impossible. Then the MW$\star$-bubble-tree-convergence of Theorem \ref{theo:MWsconvergence} is in fact a MW-$\star$ convergence. We obtain the existence of smooth minimizers on the disk for the symmetric problem. By Theorem \ref{theoembedded}, we obtain embedded free boundary minimal disks into ellipsoids. The remaining arguments to obtain non planar ones are in the introduction. Let's verify the remaining properties of Theorem \ref{theomain}

\subsubsection{Monotonicity} We prove that $L_{s,t}p_{s,t}$ is decreasing and that $L_{s,t}$ is increasing. Let $t_1 < t_2$, we set for $i=1,2$: $a_i = L_{s,t_i}p_{s,t_i}$ and $b_i = L_{s,t_i}$ the first and second Steklov eigenvalues of the spectral problem. We have
$$ h_{s,t_1}(a_1,b_1) \leq h_{s,t_1}(a_2,b_2) 
\text{ and } h_{s,t_2}(a_2,b_2) \leq h_{s,t_2}(a_1,b_1) $$
so that if $s<0$,
$$ a_1^{-s} + t_1 b_1^{-s} \geq a_2^{-s} + t_1 b_2^{-s} \text{ and } a_2^{-s} + t_2 b_2^{-s} \geq a_1^{-s} + t_2 b_1^{-s} $$
and by sum,
$$  (t_2-t_1) (b_2^{-s} -b_1^{-s}) \geq 0 $$
and dividing the equations by $t_i$, and a sum,
$$ \left(\frac{1}{t_1} - \frac{1}{t_2}\right) \left( a_1^{-s} - a_2^{-s} \right) \geq 0 $$
We argue the same way if $s>0$.

\subsubsection{Energy estimates}
By Theorem \ref{theotestfunction}, we have that for any $s\neq 0$ $t>0$ and $\eps >0$, $ h_{s,t}(D_{s,t},g_{s,t}) \leq h_{s,t}(\mathbb{D},h_\eps) $ and a straightforward computation gives on $\sigma_t := \bar{\sigma}_1\left(D_{s,t},g_{s,t}\right) $ that
$$ \sigma_t \geq \left( a_\eps^{-s} + t \left( b_\eps^{-s} - \bar{\sigma}_2\left(D_{s,t},g_{s,t}\right)^{-s} \right)  \right)^{-\frac{1}{s}} \geq \left( a_\eps^{-s} + t \left( b_\eps^{-s} - \left(4\pi\right)^{-s} \right)  \right)^{-\frac{1}{s}} , $$
where $a_\eps = \frac{2\pi}{\ln\left( \frac{1}{\eps}\right)} + O \left(  \frac{1}{\ln\left( \frac{1}{\eps}\right)^2} \right)$ and $b_\eps = 4\pi - 16\pi \eps +o(\eps)$ as $\eps \to 0$, so that
$$ \frac{\sigma_t}{2\pi} \geq \ln\left(\frac{1}{\eps}\right)^{-1} \left( 1 + O\left( \frac{1}{\ln\left(\frac{1}{\eps}\right)} \right) +  t \eps \ln\left(\frac{1}{\eps}\right)^{-s} s (4\pi)^{-s} 16\pi (1 + o(1)) \right)^{-\frac{1}{s}}  $$
as $\eps\to 0$, so that choosing $ t = \eps^{-1} \ln \left(\frac{1}{\eps}\right)^{s-1}$  we have
\begin{equation} \label{eqintx0} \bar{\sigma}_1\left(D_{s,t},g_{s,t} \right) \geq \frac{2\pi}{\ln t}\left(1 + O\left(\frac{1}{\ln t}\right)\right) \end{equation}
as $t\to +\infty$. In addition, we have that for any $s\neq 0$
\begin{equation*} \bar{\sigma}_2\left(D_{s,t},g_{s,t}\right) 
 \geq \begin{cases} \left( \frac{- \bar{\sigma}_1\left(D_{s,t},g_{s,t}\right)^{-s}}{t}  +   (4\pi)^{-s} 
 \right)^{-\frac{1}{s}} \text{ if } s<0 \\ \left( \frac{a_\eps^{-s} }{t}  +   b_\eps^{-s}  
 \right)^{-\frac{1}{s}} \text{ if } s>0  \end{cases}, \end{equation*}
so that if $s<0$, we obtain $\bar{\sigma}_2\left(D_{s,t},g_{s,t}\right) \geq 4\pi - O\left(\frac{1}{t}\right) $ as $t\to +\infty$ and if $s>0$, 
$$ \bar{\sigma}_2\left(D_{s,t},g_{s,t}\right) \geq 4\pi \left( \frac{\ln\left(\frac{1}{\eps}\right)^s 2^s}{t}\left(1+O\left(\frac{1}{\ln\left(\frac{1}{\eps}\right)}\right)\right) + 1 + 4s \eps (1+o(1)) \right)^{-\frac{1}{s}} $$
as $\eps \to 0$, so that setting $ t = \eps^{-1} $,
\begin{equation} \label{eqintx02} \bar{\sigma}_2\left(D_{s,t},g_{s,t}\right) \geq 4\pi - O\left( \frac{\left(\ln t\right)^{\max\{s,0\}}}{t} \right) \end{equation}
as $t\to +\infty$.

Knowing that the critical metrics of $F_{s,t}$ satisfy the mass conditions of Proposition \ref{propcritical} coming from the choice of the combination $h_{s,t}$, we obtain
$$ t = \frac{\int_{\partial D_{s,t}} (x_1^2 + x_2^2)dL_{g_{s,t}} }{\int_{\partial D_{s,t}} x_0^2 dL_{g_{s,t}}} \frac{1}{p_{s,t}^{s+1}} 
$$
for any $s$, and since $L_{s,t} = p_{s,t}\int_{\partial D_t} x_0^2 dL_{g_{s,t}}+ \int_{\partial D_{s,t}} (x_1^2 + x_2^2)dL_{g_t}$, we obtain
$$\int_{\partial D_{s,t}} x_0^2 dL_{g_{s,t}} = \frac{L_{s,t}}{t p_{s,t}^{s+1} + p_{s,t}} \to 0 \text{ as } t \to +\infty,$$
since $\bar{\sigma}_1\left(D_{s,t},g_{s,t} \right) = p_{s,t}L_{s,t} $ satisfies \eqref{eqintx0} and $\bar{\sigma}_2\left(D_{s,t},g_{s,t} \right) = L_{s,t} \to 4 \pi$ by \eqref{eqintx02}. Then $x_0$ converges to $0$ in $H^1\left(D_{s,t}\right)$.

\subsubsection{Varifold convergence to disk with multiplicity 2}
From the very last of the subsection 2.2.2, we deduce that up to take a conformal diffeomorphism $\Phi_{s,t} : \mathbb{D} \to D_{s,t}$ of coordinates $(\phi_0^{s,t}, \phi_1^{s,t} ,\phi_2^{s,t})$,  $(\phi_1^{s,t},\phi_2^{s,t})$ with the sequence of induced metrics $\Phi_{s,t}^\star eucl = e^{2u_{s,t}}g_{\mathbb{D}}$ is a $(PS)_2$ sequence as $t\to +\infty$. We use again Proposition 3.1 in \cite{Pet22}, to obtain the MW$\star$ bubble tree convergence of $e^{u_{s,t}}d\theta$. We must have $2$ disks at the limit since $\bar{\sigma}_2(e^{u_{s,t}}d\theta) \to 4\pi$ as $t\to +\infty$.

In addition, following the proof of Proposition 3.1, and in particular Claim 3.7, we obtain that $(\phi_1^{s,t},\phi_2^{s,t})$ $H^1$-bubble tree converges as $t\to +\infty$. This implies the varifold convergence of $D_{s,t}$. This completes the proof of Theorem \ref{theomain}.

\section{Spectral gaps} \label{spectralgaps}

\subsection{Notations and preliminaries}

On the Euclidean disk $(\mathbb{D},\xi)$, we set the following competitor $g_{\eps} = e^{2\omega_{\eps}}\xi$, where for $z = (z_1,z_2) \in \mathbb{D}^2$ we set
$$ e^{\omega_{\eps}(z)} = \frac{\beta_{\eps}^2 - 1}{\left\vert \beta_{\eps} - z\right\vert^2 } + \frac{\beta_{\eps}^2 - 1}{\left\vert \beta_{\eps} + z\right\vert^2 } \hskip.1cm,$$
for
$$ \beta_{\eps} = \frac{1+\eps}{1-\eps} > 1 \hskip.1cm.$$
Let's describe the surface $(\mathbb{D},g_{\eps})$ in conformal coordinates. 
We often prefer to be in the chart of the $Im(x)\geq 0$ half plane $\mathbb{H}_+$ via the holomorphic maps $f_{\pm} :\mathbb{D}\to \mathbb{H}_+ $ defined by
$$ f_{\pm}(z) = i \frac{1 \mp z}{1 \pm z} \hbox{ and } f_{\pm}^{-1}(x) = \pm \frac{1+ i x}{1- ix} \hskip.1cm. $$
Notice that 
$$ \left( f_{\pm}^{-1} \right)^{\star} \xi = \frac{4}{ \left\vert 1-ix \right\vert^4 } \xi = \frac{4}{\left( \left(1+ x_2 \right)^2+ x_1^2\right)^2}  \xi = e^{2u}\xi $$
where the function
$$ u(x_1,x_2)=\ln\left( \frac{2}{\left(1+x_2\right)^2 + x_1^2}\right) $$
satisfies the Liouville equation
$$ \begin{cases} \Delta u = 0 \text{ in } \mathbb{H}_+ \\
-\partial_{y} u = e^u \text{ on } \mathbb{R}\times\{0\}\hskip.1cm.
\end{cases} $$
Notice that the first Steklov eigenfunctions on the flat disk $z_1$ and $z_2$ become on $\mathbb{H}_+$ endowed with the metric $e^{2u}\xi$:
\begin{equation}
\begin{split}
Z_1(x) = Re(f_+^{-1}(x)) = \frac{1-\left\vert x \right\vert^2}{(1+x_2)^2+(x_1)^2} \\  Z_2(x) = Im(f_+^{-1}(x)) = \frac{2 x_1}{(1+x_2)^2+(x_1)^2} \hskip.1cm.
\end{split}
\end{equation}

\medskip

The function $\omega_{\eps} : \mathbb{D}\to \mathbb{R} $ was defined such that $\omega_{\eps}:= \tilde{\omega}_{\eps}\circ f_+ - u \circ f_+$ where
$$ \tilde{\omega}_{\eps} = \ln\left( e^{\tilde{u}_{\eps}} + e^{\tilde{v}_{\eps}} \right), \hspace{3mm} e^{\tilde{u}_{\eps}(x)}= \frac{1}{\eps}e^{u\left(\frac{x}{\eps}\right)} \text{ and } e^{\tilde{u}_{\eps}(x)}= \eps e^{u\left(\eps x\right)}$$
where we notice that $u_{\eps}:= \tilde{u}_{\eps}\circ f_+ - u \circ f_+$ and $v_{\eps}:= \tilde{v}_{\eps}\circ f_+ - u \circ f_+$ satisfy
$$ e^{2u_{\eps}} = \frac{\beta_{\eps}^2 - 1}{\left\vert \beta_{\eps} - z \right\vert^2} \text{ and } e^{2v_{\eps}} = \frac{\beta_{\eps}^2 - 1}{\left\vert \beta_{\eps} + z \right\vert^2}  \hskip.1cm.$$
We denote by 
$$ e^{\hat{\omega}_{\eps}(x)}  = \eps e^{\tilde{\omega}_{\eps}(\eps x)} $$
the rescaled potential close to the point $(1,0)$ at the scale $\eps$. We see that the metric $ e^{2\hat{\omega}_{\eps}} \xi$ converges to $e^{2u}\xi$ in any compact set of $\mathbb{H}_+$. By symmetry, the rescaled potential close to the point $(-1,0)$ at the scale $\eps$ also converges to $e^{2u}\xi$ on any compact set of $\mathbb{H}_+$. The surface $(\mathbb{D},g_{\eps})$ then represents two flat disks attached by a thin strip.

Now, except for the potentials $e^{\tilde{\omega}_{\eps}}$, and $e^{\hat{\omega}_{\eps}(x)}$ just defined before, we denote for any function $\varphi : \mathbb{D}\to \mathbb{R}$:
$$ \tilde{\varphi}(x) = \varphi \circ f_+^{-1}(x)  \text{ and } \hat{\varphi}(x) = \varphi\circ f_+^{-1}(\eps x) = \tilde{\varphi}(\eps x) $$
Notice that all the functions we consider in the following satisfy that $\varphi \circ f_+^{-1}(x) = \pm \varphi \circ f_-^{-1}(x)$ for any $x$ so that the analysis occuring for eigenfunctions at the neighbourhood of $(1,0)$ is the same as the one occuring at the neighbourhood of $(-1,0)$.

We notice that $L_{g_{\eps}}(\partial\mathbb{D}) = 4\pi$ as a sum of potentials isometric to radius one flat disks. We set $\sigma_{\eps,1}:= \sigma_{1}(\mathbb{D},g_{\eps})$ and $\sigma_{\eps,2}:= \sigma_{2}(\mathbb{D},g_{\eps})$ the first and second eigenvalues associated to $(\mathbb{D},g_{\eps})$. We aim at computing the asymptotic expansion of these quantities as $\eps\to 0$.

\subsection{Upper bounds for first and second eigenvalue} In this part, we aim at proving that
\begin{equation} \label{eqrayleighf1}
\sigma_{\eps,1} \leq \frac{1}{2\ln\frac{1}{\eps} } + O\left( \frac{1}{\left(\ln\frac{1}{\eps}\right)^2}\right)
\end{equation}
\begin{equation} \label{eqrayleighf2}
\sigma_{\eps,2}  \leq 1 +O(\eps)
\end{equation}
as $\eps\to 0$. We set $f_1:\mathbb{D}\to \mathbb{R}$ and $f_2:\mathbb{D}\to \mathbb{R}$ such that
\begin{equation} f_1\circ f_+^{-1}(x) = 
\begin{cases} 1 & \hbox{ on } \mathbb{D}_{\eps}^+ \\
\frac{\ln\left(\left\vert x \right\vert\right)}{\ln \eps} & \hbox{ on } \mathbb{D}_{\frac{1}{\eps}}^+\setminus\mathbb{D}_{\eps}^+ \\
-1 & \hbox{ on } \mathbb{R}^2\setminus \mathbb{D}_{\frac{1}{\eps}}^+ \hskip.1cm.
 \end{cases}  \end{equation}
and
\begin{equation} f_2(z) = \sqrt{\beta_{\eps}^2-1} \frac{z_2}{\left\vert\beta_{\eps}-z\right\vert^2} \hskip.1cm, \end{equation}
defined in order to have that
\begin{equation} f_2\circ f_+^{-1}(\eps x) = \frac{2x_1}{ (1+x_2)^2+x_1^2} \hskip.1cm, \end{equation}
represents the first Steklov eigenfunction $z_2$ of the flat disk. By symmetry, we have that
$$ \int_{\mathbb{S}^1} f_1 d\theta = \int_{\mathbb{S}^1} f_2 d\theta = \int_{\mathbb{S}^1} f_1 f_2 d\theta = 0 \hskip.1cm.$$
By conformal invariance of the Dirichlet energy, we have that
$$ \int_{\mathbb{D}} \left\vert \nabla f_1\right\vert_{g_{\eps}}^2dA_{g_{\eps}}  = \int_{\mathbb{D}}\left\vert \nabla f_1\right\vert^2 = \int_{\mathbb{R}^2_+} \left\vert \nabla f_1\circ f_+^{-1}\right\vert^2 = \pi \int_{\eps}^{\frac{1}{\eps}}\frac{dr}{r\left(\ln\eps\right)^2} = \frac{2\pi}{\ln \frac{1}{\eps}} $$
and we compute the $L^2$ norm on the boundary as
$$ \int_{\mathbb{S}^1}\left(f_1\right)^2dL_{g_{\eps}} = \int_{\mathbb{R}\times\{0\}}\left( \tilde{f}_1\right)^2e^{\tilde{\omega}_{\eps}} = 2 \int_{[-1,1]\times\{0\}}\left( \tilde{f}_1\right)^2e^{\tilde{\omega}_{\eps}} = 2 \int_{-\frac{1}{\eps}}^{\frac{1}{\eps}} \left( \hat{f}_1\right)^2e^{\hat{\omega}_{\eps}}$$
by symmetry and we get
$$ \int_{\mathbb{S}^1}\left(f_1\right)^2dL_{g_{\eps}} = 2\int_{-\frac{1}{\eps}}^{\frac{1}{\eps}} \left( 1-\frac{\ln \left\vert x_1\right\vert}{\ln\eps}\right)^2 \frac{2}{1+\left(x_1\right)^2}dx_1 + O(\eps) = 4\pi + O\left(\frac{1}{\ln\frac{1}{\eps}}\right).$$
Therefore the Rayleigh quotient gives \eqref{eqrayleighf1}.

Again, the conformal invariance of the Dirichlet energy gives that
$$ \int_{\mathbb{D}} \left\vert \nabla f_2\right\vert_{g_{\eps}}^2dA_{g_{\eps}}  = \int_{\mathbb{D}}\left\vert \nabla x_2 \right\vert^2 = \pi$$
and we have by symmetry and neglecting small terms in $e^{\hat{\omega_{\eps}}}$:
$$ \int_{\mathbb{S}^1}\left(f_2\right)^2dL_{g_{\eps}} = 2 \int_{-\frac{1}{\eps}}^{\frac{1}{\eps}} \left( \hat{f}_2\right)^2e^{\hat{\omega}_{\eps}} = 2 \int_{-\frac{1}{\eps}}^{\frac{1}{\eps}} \left(\frac{2x_1}{1+\left(x_1\right)^2}\right)^2 \frac{2}{1+\left(x_1\right)^2}dx_1 + O\left(\eps\right)$$
as $\eps\to 0$. Easy computations by integration by parts finally give
$$ \int_{\mathbb{S}^1}\left(f_2\right)^2dL_{g_{\eps}} = \pi + O\left(\eps\right) $$
as $\eps\to 0$, so that taking the Rayleigh quotient, we get \eqref{eqrayleighf2}.

\subsection{Convergence of the first eigenfunction and the first eigenvalue}

In this subsection, we aim at proving that
\begin{equation}\label{eqasympsigma1} \sigma_{\eps,1} = \frac{1}{2\ln\frac{1}{\eps}} + O\left(\frac{1}{\left(\ln\eps\right)^2}\right)
\end{equation}
as $\eps\to 0$. Let $\varphi_{\eps,1}$ be a first eigenfunction associated to $\sigma_{1,\eps}$. We have the following equation
\begin{equation}
\label{eqsteklovtonvarpuieps}
\begin{cases}
\Delta \varphi_{\eps,1} = 0 & \text{ in } \mathbb{D} \\
\partial_{r} \varphi_{\eps,1} = \sigma_{\eps,1}e^{\omega_{\eps}} \varphi_{\eps,1} & \text{ in } \mathbb{S}^1
\end{cases}
\text{ and }
\int_{\mathbb{S}^1}\left(\varphi_{\eps,1}\right)^2dL_{g_{\eps}} = 4\pi \hskip.1cm.
\end{equation}
Up to symmetrize or antisymmetrize in case of multiplicity (that a posteriori do not happen), we can assume that
\begin{equation}\label{eqsymassumpsteklov1} \varphi_{\eps,1}(z_1,z_2)^2 = \varphi_{\eps,1}(-z_1,z_2)^2 = \varphi_{\eps,1}(z_1,-z_2)^2 \end{equation}
for any $z=(z_1,z_2)\in \mathbb{D}$. Then $\hat{\varphi}_{\eps,1}$ satisfies the following equation in $\mathbb{R}^2_+$:
\begin{equation}
\label{eqsteklovtonvarpuieps+}
\begin{cases}
\Delta \hat{\varphi}_{\eps,1} = 0 & \text{ in } \mathbb{D} \\
-\partial_{x_2} \hat{\varphi}_{\eps,1} = \sigma_{\eps,1}\frac{2}{1+(x_1)^2} \hat{\varphi}_{\eps,1}+ \sigma_{\eps,1}\frac{2\eps^2}{1+(x_1)^2\eps^4} \hat{\varphi}_{\eps,1} & \text{ on } \mathbb{R}\times\{0\}
\end{cases}
\end{equation}
and
$$
\int_{\mathbb{R}\times\{0\}}\left(\hat{\varphi}_{\eps,1}\right)^2 \left(\frac{2}{1+(x_1)^2}  +\frac{2\eps^2}{1+(x_1)^2\eps^4} \right)dx_1 = 4\pi \hskip.1cm.
$$
In particular, $
\int_{\mathbb{R}\times\{0\}}\left(\hat{\varphi}_{\eps,1}\right)^2 \frac{2}{1+(x_1)^2} dx_1 \leq 4\pi $ and by standard Elliptic theory, since $\sigma_{\eps,1}\to 0$ as $\eps\to 0$, we obtain that
$$\hat{\varphi}_{\eps,1}\to \varphi_{\star,1}^+ \text{ in } \mathcal{C}^2\left(\mathbb{D}_{\rho}\right) $$
as $\eps\to 0$, for any $\rho>0$. Letting \eqref{eqsteklovtonvarpuieps+} pass to the limit as $\eps\to 0$, we obtain that 
\begin{equation}
\label{eqsteklovtonvarpuistar+}
\begin{cases}
\Delta \hat{\varphi}_{\star,1}^+ = 0 & \text{ in } \mathbb{R}^2_+ \\
-\partial_{x_2} \hat{\varphi}_{\star,1}^+ = 0 & \text{ on } \mathbb{R}\times\{0\}
\end{cases}
\end{equation}
Since $\int_{\mathbb{R}\times\{0\}}\left(\hat{\varphi}_{\eps,1}\right)^2 \frac{2}{1+(x_1)^2} dx_1 \leq 4\pi$, we get that $\varphi_{\star}^+$ is a constant function. Up to take $-\varphi_{\eps,1}$ instead of $\varphi_{\eps,1}$, we can assume that $\varphi_{\star,1}^+\geq 0$. Looking at $\varphi_{\eps,1}\circ \left(f_{-}\right)^{-1}(\eps x)$ instead of $\hat{\varphi}_{\eps,1}(x):=\varphi_{\eps,1}\circ \left(f_{+}\right)^{-1}(\eps x)$, we also get a constant function $\varphi_{\star,1}^-$ at the limit and by symmetry, $\left(\varphi_{\star,1}^+\right)^2 = \left(\varphi_{\star,1}^-\right)^2$.

\medskip

We denote by
$$ m_{\eps,1}(r) = \frac{1}{\pi}\int_{0}^\pi \tilde{\varphi}_{\eps,1}(r\cos \theta,r\sin\theta)d\theta $$
the mean value of $\tilde{\varphi}_{\eps,1}$ on half circle (corresponding to the mean value of $\varphi_{\eps,1}$ on the lines $\left\vert \frac{1-z}{1+z}\right\vert = constant$). It is a radial function. We also set 
$$ \psi_{\eps,1}(x) = \tilde{\varphi}_{\eps,1}(x) - m_{\eps,1}(\left\vert x \right\vert) \hskip.1cm.$$
We have
\begin{equation}
\label{eqsteklovtonpsieps}
\begin{cases}
\Delta \psi_{\eps,1} = - \Delta m_{\eps,1} = -\frac{\sigma_{\eps,1}}{\pi r}e^{\tilde{\omega}_{\eps}}\left(\tilde{\varphi}_{\eps,1}(r,0)+\tilde{\varphi}_{\eps,1}(-r,0)\right) & \text{ in } \mathbb{R}^2_+ \\
-\partial_{x_2} \psi_{\eps,1} = \sigma_{\eps,1}e^{\tilde{\omega}_{\eps}}\tilde{\varphi}_{\eps,1} & \text{ on } \mathbb{R}\times\{0\} \hskip.1cm.
\end{cases}
\end{equation}
Let $x_{\eps}\in \mathbb{D}_{+}$ be such that $\psi_{\eps}(x_{\eps}) = \left\|\psi_{\eps,1} \right\|_{\infty}$. We aim at proving that
\begin{equation} \label{ineqpsiepsunifboundedsteklov} \left\|\psi_{\eps,1} \right\|_{\infty}\leq C\sigma_{\eps,1} \end{equation}
for some positive constant $C$. We have two cases:

If $\left\vert x_{\eps} \right\vert = O\left(\eps\right)$, then $\hat{\psi}_{\eps,1}$ satisfies 
\begin{equation}
\label{eqsteklovtonpsieps+}
\begin{cases}
\Delta \hat{\psi}_{\eps,1} =  -\frac{\sigma_{\eps,1}}{\pi r}\left(\frac{2}{1+r^2}+\frac{2\eps^2}{1+\eps^4r^2}\right)\left(\hat{\varphi}_{\eps,1}(r,0)+\hat{\varphi}_{\eps,1}(-r,0)\right) & \text{ in } \mathbb{R}^2_+ \\
-\partial_{x_2} \hat{\psi}_{\eps,1} = \sigma_{\eps,1}\frac{2}{1+(x_1)^2}\hat{\varphi}_{\eps,1} + \sigma_{\eps,1}\frac{2\eps^2}{1+\eps^4(x_1)^2}\hat{\varphi}_{\eps,1}  & \text{ on } \mathbb{R}\times\{0\} \hskip.1cm.
\end{cases}
\end{equation}
We also know by assumption that for any $\rho>0$ we have $ \int_{\mathbb{D}_\rho^+} \psi_{\eps,1}= 0 $. We also know that $\psi_{\eps,1}$ has a bounded energy. By asumption, the right-hand side term of the second equation is bounded in $L^2\left(\mathbb{R}^2_+\right)$. Moreover, by H\"older inequalities, the right-hand side term in the first equation is bounded in $L^p$ for any $p<2$. By standard elliptic theory \eqref{ineqpsiepsunifboundedsteklov} holds true.

If $\eps = o\left(\left\vert x_{\eps} \right\vert\right) $, then we set $\phi_{\eps}(x)= \psi_{\eps,1}(\left\vert x_{\eps} \right\vert x)$ and we have the equation
\begin{equation}
\label{eqsteklovtonphieps+}
\begin{cases}
\Delta \phi_{\eps} =  -\frac{\sigma_{\eps,1}}{\pi r}\left(\frac{2\frac{\eps}{\left\vert x_{\eps} \right\vert}}{\frac{\eps^2}{\left\vert x_{\eps} \right\vert^2}+r^2}+\frac{2\eps^2\left\vert x_{\eps} \right\vert}{1+\eps^4\left\vert x_{\eps} \right\vert^2 r^2}\right)\left(\tilde{\varphi}_{\eps,1}(r\left\vert x_{\eps} \right\vert,0) + \tilde{\varphi}_{\eps,1}(-r \left\vert x_{\eps} \right\vert,0)\right) \\
-\partial_{x_2} \phi_{\eps} =   \frac{\sigma_{\eps,1}.2\frac{\eps}{\left\vert x_{\eps} \right\vert}}{\frac{\eps^2}{\left\vert x_{\eps} \right\vert^2}+(x_1)^2}\tilde{\varphi}_{\eps,1}(-x_1\left\vert x_{\eps}\right\vert,0) + \frac{\sigma_{\eps,1}.2\eps^2\left\vert x_{\eps} \right\vert}{1+\eps^4\left\vert x_{\eps} \right\vert^2(x_1)^2}\tilde{\varphi}_{\eps,1}(x_1\left\vert x_{\eps}\right\vert,0) 
\end{cases}
\end{equation}
so that by standard elliptic estimates in $A = \mathbb{D}_2^+\setminus \mathbb{D}_{\frac{1}{2}}^+$, knowing that $\int_{A} \phi_{\eps} = 0$, we obtain thanks to $L^p$ boundedness in the left-hand side terms of the equation for $p<2$ that $\left\|\psi_{\eps} \right\|_{\infty} \leq C \sigma_1^{\eps}$ for some positive constant $C$ independent from $\eps$ and \eqref{ineqpsiepsunifboundedsteklov} holds true.

\medskip

We deduce from \eqref{ineqpsiepsunifboundedsteklov}  that
\begin{equation} \label{eqL2normfirsteigenfunction}
4\pi = \int_{\mathbb{S}^1} \left(\varphi_{\eps,1}\right)^2dL_{g_{\eps}} = \int_{\mathbb{S}^1} \left(m_{\eps,1}\right)^2dL_{g_{\eps}} + O\left(\sigma_{\eps,1}\right)
 \end{equation}
as $\eps\to 0$.

By the Courant nodal theorem, and symmetry assumptions \ref{eqsymassumpsteklov1}, the only possibilities for nodal sets are $\{z_1 = 0\}$ or $\{z_2=0\}$. But if $\{z_2  = 0\}$ is a nodal set, we obtain that $\varphi_{\eps,1}(z_1,z_2) = \varphi_{\eps,1}(z_1,-z_2) $ by the Courant nodal theorem and we would get $m_{\eps,1}=0$. However, since $\sigma_{\eps,1}\to 0$ as $\eps\to 0$, \eqref{eqL2normfirsteigenfunction} gives a contradiction. Therefore, $\{z_1 = 0\}$ is the nodal line of $\varphi_{\eps,1}$ and $\{\left\vert x \right\vert = 1\}$ is the zero set for $m_{\eps,1}$. Moreover $m_{\eps,1}$ and $\tilde{\varphi}_{\eps,1}$ are positive in $\mathbb{D}$ and negative in $\mathbb{R}^2\setminus\mathbb{D}$. $m_{\eps,1}$ satisfies:
$$ \Delta m_{\eps,1}:= -\frac{1}{r}\partial_r\left(rm_{\eps,1}'\right) = \frac{\sigma_{\eps,1}}{\pi r} e^{\tilde{\omega}_{\eps}}\left(\tilde{\varphi}_{\eps,1}(r,0)+\tilde{\varphi}_{\eps,1}(-r,0)\right) \hskip.1cm.$$
Integrating between $0$ and $r\leq 1$, we get
\begin{equation}\label{eqintegratesteklov1} m_{\eps,1}(r)-m_{\eps,1}(0) =  -\int_{0}^r \frac{1}{s}\left( \int_{[-s,s]\times\{0\}} \sigma_{\eps,1}e^{\tilde{\omega_{\eps}}}\tilde{\varphi}_{\eps,1} \right) \leq 0 \end{equation}
Then, $m_{\eps,1}$ realizes its maximum at $0$. Then, we have that for any $\rho>0$,
$$ \int_{\rho\eps}^{1} \left(\tilde{\varphi}_{\eps,1}\right)^2 e^{\tilde{\omega}_{\eps}} dx_1 \leq\left(\varphi_{\eps,1}(0)\right)^2\int_{\rho\eps}^{1}  e^{\tilde{\omega}_{\eps}} dx_1 \leq \frac{C}{\rho} $$
for a positive constant $C$ independent from $\rho$ and $\eps$. Letting $\eps\to 0$, and then $\rho \to 0$ in \eqref{eqL2normfirsteigenfunction}, we obtain that $\left(\varphi_{\star,1}^+\right)^2+\left(\varphi_{\star,1}^-\right)^2 = 2$. Then, since we know that $\varphi_{\eps,1}(z_1,z_2) = -\varphi(-z_1,z_2)$, and $\varphi_{\star,1}^+ \geq 0$, we have that $\varphi_{\star,1}^+ = 1$ and $\varphi_{\star,1}^- = -1$.

Taking $r=1$ in \eqref{eqintegratesteklov1}, we obtain
\begin{eqnarray*} m_{\eps,1}(0) &= & \frac{\sigma_{\eps,1}}{\pi} \int_{0}^1\frac{1}{s}\left( \int_{[-s,s]\times\{0\}} e^{\tilde{\omega}_{\eps}}\tilde{\varphi}_{\eps,1} \right)   \\
& \leq & \frac{\sigma_{\eps,1}}{\pi} \tilde{\varphi}_{\eps,1}(0) \int_{0}^{1}\frac{1}{s} \int_{-s}^{s}  \frac{2\eps}{\eps^2+u^2}   du + O\left(\eps \right) \\
& \leq & \frac{\sigma_{\eps,1}}{\pi} \tilde{\varphi}_{\eps,1}(0) \int_{0}^{\frac{1}{\eps}} \frac{2 }{1 +r^2 }  \left(\ln \frac{1}{ r\eps} \right)dr + O\left(\eps \right) \\
& \leq & \frac{\sigma_{\eps,1}}{\pi} \tilde{\varphi}_{\eps,1}(0)\left(\ln\frac{1}{\eps}\right) \int_{0}^{+\infty} \frac{2}{1+r^2} dr + O\left( \sigma_{\eps,1}\right) + O\left(\eps \right) \\ \end{eqnarray*}
and knowing that $\tilde{\varphi}_{\eps,1}(0)\to 1$ as $\eps\to 0$, we get 
$$ \sigma_{\eps,1} \geq \frac{1}{2\ln\frac{1}{\eps}} + O\left(\frac{\sigma_{\eps,1}}{\ln\frac{1}{\eps}}\right) $$
so that with \eqref{eqrayleighf1}, we obtain \eqref{eqasympsigma1}.

\subsection{The second eigenfunction and second eigenvalue}
In this section, we aim at proving that 
\begin{equation}\label{eqprovesigma2eps} \sigma_{\eps,2} = 1 - 4\eps + o(\eps) \end{equation}
as $\eps\to 0$.

We focus on the equation satisfied by $\varphi_{\eps,2}$, an eigenfunction associated to the second non-zero eigenvalue $\sigma_{\eps,2}$, satisfying 
\begin{equation}\label{eqL2valuessecondsteklov} \int_{\mathbb{S}^1} \left(\varphi_{\eps,2}\right)^2 e^{\omega_{\eps}}dz = \pi \hbox{ and } \int_{\mathbb{S}^1} \varphi_{\eps,2} e^{\omega_{\eps}}dz = \int_{\mathbb{S}^1} \varphi_{\eps,1}\varphi_{\eps,2} e^{\omega_{\eps}}dz = 0 \hskip.1cm,\end{equation}
where $\varphi_{\eps,1}$ is the radial first eigenfunction of the previous section. Up to symmetrization again, in case of multiplicity, we assume in addition that 
\begin{equation} \label{symmetriessecondeigenfunctionsteklov} \varphi_{\eps,2}(z_1,z_2)^2 = \varphi_{\eps,2}(-z_1,z_2)^2 = \varphi_{\eps,2}(z_1,-z_2)^2 \end{equation}
In fact, we must have 
\begin{equation} \label{symmetrysecondeigenfunctionsteklov} \varphi_{\eps,2}(z_1,z_2) = \varphi_{\eps,2}(-z_1,z_2) \hskip.1cm. \end{equation} 
Indeed, if not, we would have antisymmetry so that $\varphi_{\eps,2}(0,z_2) = 0$. By orthogonality with $\varphi_{\eps,1}$ which is anti-symmetric, $\varphi_{\eps,2}$ must vanish elsewhere: $\varphi_{\eps,2}(z_1,z_2) = 0$ for $z_1> 0$. The nodal line containing $z$ is a line connecting two points of $\left(\mathbb{S}^1\cap\{z_1>0\}\right) \cup \{z_1=0\}$.  Then $\varphi_{\eps,2}$ has at least two nodal domains in $\{z_1>0\}$. By symmetry, $\varphi_{\eps,2}$ has at least four nodal domains, contradicting the Courant nodal theorem. We also deduce from the orthogonality with the constants that
\begin{equation} \label{eqorthogonalitysteklovwithsymmetry2} \int_{\mathbb{S}^1\cap\{z_1>0\}} \varphi_{\eps,2} e^{\omega_{\eps}}dz = \int_{\mathbb{S}^1\cap \{z_1<0\}} \varphi_{\eps,2} e^{\omega_{\eps}}dz = 0 \hskip.1cm. \end{equation}

\medskip 

Rescaling at the neighbourhood of $(1,0)$, $\hat{\varphi}_{\eps,2}$ satisfies the equation
\begin{equation}
\label{eqsteklovtonvarpuieps+2}
\begin{cases}
\Delta \hat{\varphi}_{\eps,2} = 0 & \text{ in } \mathbb{D} \\
-\partial_{x_2} \hat{\varphi}_{\eps,2} = \sigma_{\eps,2}\frac{2}{1+(x_1)^2} \hat{\varphi}_{\eps,2}+ \sigma_{\eps,2}\frac{2\eps^2}{1+(x_1)^2\eps^4} \hat{\varphi}_{\eps,2} & \text{ on } \mathbb{R}\times\{0\}
\end{cases}
\end{equation}
with 
$$ \int_{\mathbb{R}\times\{0\}} \left( \frac{2}{\left(1+( x_1)^2\right)^2}  + \frac{2\eps^2}{1+\eps^4( x_1)^2}\right) \left(\hat{\varphi}_{\eps,2}\right)^2 dx = \pi \hskip.1cm.$$
In particular, $\int_{\mathbb{R}\times\{0\}}  \frac{2}{\left(1+( x_1)^2\right)^2} \left(\hat{\varphi}_{\eps,2}\right)^2 dx \leq \pi $. $\sigma_{\eps,2}$ is bounded by \eqref{eqrayleighf2} and converges up to the extraction of a subsequence to $\sigma_{\star,2}$. By standard elliptic estimates, up to the extraction of a subsequence,
\begin{equation} \label{equnifconvpgieps2steklov} \hat{\varphi}_{\eps,2} \to \varphi_{\star,2}^+ \hbox{ in } \mathcal{C}^{2}\left(\mathbb{D}_\rho^+ \right) \end{equation}
for any $\rho>0$ and
\begin{equation}
\label{eqsteklovtonvarpuistar+2}
\begin{cases}
\Delta \varphi_{\star,2}^+  = 0 & \text{ in } \mathbb{R}^2_+ \\
-\partial_{x_2} \varphi_{\star,2}^+ = \sigma_{\star,2}e^u \varphi_{\star,2}^+  & \text{ on } \mathbb{R}\times\{0\}
\end{cases}
\end{equation}
By boundedness of the energy, $\varphi_{\star,2}^+ \circ f_+$ has to be a Steklov eigenfunction associated to $\sigma_{\star,2}$ on the disk. Since $\sigma_{\star,2}\leq 1$ by \eqref{eqrayleighf2}, we have either $\sigma_{\star,2} = 0$ or $\sigma_{\star,2} =1$. We aim at proving that $\sigma_{\star,2} = 1$ and at getting an estimate on $\delta_{\eps} := \sigma_{\eps,2}-1$ as $\eps\to 0$.

\medskip

Let's deal with the second symmetry in \eqref{symmetriessecondeigenfunctionsteklov}. In the following subsections we separate two cases:
\begin{equation}\label{eqantisym} \varphi_{\eps,2}(z_1,z_2) = -\varphi_{\eps,2}(z_1,-z_2) \end{equation}
is called the antisymmetric case and is handled in subsection \ref{subsectionantisymmetriccase} and in this case, we assume in addition that $\varphi_{\eps,2}(0,1)\geq 0$ up to take $-\varphi_{\eps,2}$.
\begin{equation}\label{eqsym} \varphi_{\eps,2}(z_1,z_2) = \varphi_{\eps,2}(z_1,-z_2) \end{equation}
is called the symmetric case and is handled in subsection \ref{subsectionsymmetriccase}, and in this case, we assume in addition that $\varphi_{\eps,2}(1,0)\geq 0$ up to take $-\varphi_{\eps,2}$. We a posteriori prove that this case cannot occur for the second eigenfunction.

\subsubsection{The antisymmetric case} \label{subsectionantisymmetriccase}
By antisymmetry \eqref{eqantisym}, we must have that for any $r>0$,
$$ \int_0^{\pi} \tilde{\varphi}_{\eps,2}(r\cos\theta,r\sin\theta)d\theta = 0$$
so that we can handle elliptic estimates in $\tilde{\varphi}_{\eps,2}$ at any scales. In particular we have that $\tilde{\varphi}_{\eps,2}$ is uniformly bounded. We then obtain that
\begin{equation}\label{eqconcentrationenergysteklov} \int_{\rho\eps}^1 \tilde{\varphi}_{\eps,2} e^{\tilde{\omega}_{\eps}}dx_1 \leq \frac{C}{\rho} \end{equation}
for a constant $C>0$ independant from $\rho>0$ and $\eps>0$. By \eqref{eqL2valuessecondsteklov}, \eqref{eqorthogonalitysteklovwithsymmetry2} and \eqref{equnifconvpgieps2steklov} we obtain that
$$ \int_{\mathbb{R}\times\{0\}} \left(\varphi_{\star,2}^+\right)^2e^u dx_1 = \frac{\pi}{2} \text{ and } \int_{\mathbb{R}\times\{0\}} \varphi_{\star,2}^+e^u dx_1 = 0$$ 
By \eqref{eqsteklovtonvarpuistar+2}, and remarks below, we must have $\sigma_{\star,2}=1$ and $\varphi_{\star,2}^+ \circ f_+$ has to be a first Steklov eigenfunction on the disk, that is
$$ \varphi_{\star,2}^+ = a_{\star,1} Z_1 +a_{\star,2} Z_2 \hskip.1cm,$$
where $Z_1 = Re(f_+^{-1})$ and $Z_2 = Im(f_+^{-1})$ and $\left(a_{\star,1} \right)^2 +\left(a_{\star,2}\right)^2=1$. By antisymmetry \eqref{eqantisym}, $a_{\star,1}=0$ and $a_{\star,2}=1$.

\medskip

Now, let's work on $\delta_{\eps} = \sigma_{\eps,2}-1$.

\medskip

We define $R_{\eps}:\mathbb{D}\to \mathbb{R}$ such that the function $\tilde{R}_{\eps} = R_{\eps}\circ f_{+}^{-1}$ satisfies
\begin{equation} \label{definitionrepstildesteklov} \tilde{R}_{\eps}(x)= \varphi_{\eps,2}(x) - \eta_{\eps}\left(Z_2\left(\frac{x}{\eps}\right)+Z_2\left(x\eps\right) \right)
\end{equation}
and such that $R_{\eps}\circ f_{-}^{-1}$ satisfies the same equation, where $\eta_{\eps}$ is defined in order to have $\nabla R_{\eps}(-1,0) = \nabla R_{\eps}(1,0) = 0$. Such a $\eta_{\eps}$ exists because of anti-symmetry of the involved functions. Notice that $\eta_{\eps} = 1+o(1)$ as $\eps\to 0$. We obtain the following equation on $R_{\eps}$
\begin{equation}\label{eqRepssteklov}
\begin{cases}
\Delta R_{\eps}=0 & \text{ in } \mathbb{D} \\
\partial_r R_{\eps} - e^{\omega_{\eps}}R_{\eps} = (\sigma_{\eps,2}-1)e^{\omega_{\eps}}\varphi_{\eps,2} + \eta_{\eps}\left(e^{u_{\eps}}Z_{2,\eps}^+ + e^{v_{\eps}}Z_{2,\eps}^- \right) & \text{ on } \mathbb{S}^1
\end{cases}
\end{equation}
where $Z_{2,\eps}^{\pm} = Z_2\left(\frac{f_{\pm}^{-1}(x)}{\eps}\right)$ and then integrating against $R_{\eps}$ in $\mathbb{D}$,
\begin{equation}\label{eqW12estonRepsantisym}
\begin{split}
\int_{\mathbb{D}} \left\vert \nabla R_{\eps}\right\vert^2 =&  2 \int_{\mathbb{S}^1} \left(R_{\eps}\right)^2 e^{u_{\eps}} + \left(\sigma_{\eps,2}-1\right) \int_{\mathbb{S}^1}e^{\omega_{\eps}} \varphi_{\eps,2}R_{\eps} \\
&+ \eta_{\eps} \int_{\mathbb{S}^1} \left(e^{u_{\eps}} Z_{2,\eps}^+  + e^{v_{\eps}} Z_{2,\eps}^- \right) R_{\eps} \\
\leq & \left\| R_{\eps} \right\|_{\infty} \left( \left\| R_{\eps} \right\|_{\infty} +2\pi \left\| \varphi_{\eps,2} \right\|_{\infty} \left\vert \sigma_{\eps,2} -1 \right\vert + O\left(\eps\right) \right) \\
\leq & C \left\| R_{\eps} \right\|_{\infty} \left(\left\| R_{\eps} \right\|_{\infty} + \delta_{\eps} + \eps  \right) \hskip.1cm,
\end{split}
\end{equation}
where we computed $I:= \int_{\mathbb{S}^1} \left(e^{u_{\eps}} Z_{2,\eps}^+  + e^{v_{\eps}} Z_{2,\eps}^- \right) R_{\eps}$ as
\begin{equation*} 
\begin{split}
I = & 2 \int_{\mathbb{S}^1_-} \left(e^{u_{\eps}} Z_{2,\eps}^+  + e^{v_{\eps}} Z_{2,\eps}^- \right) R_{\eps} \\
= & 2 \int_{-1}^{1} \left(\frac{2\eps}{1+ \eps^2 ( x_1 )^2  } \frac{2x_1 \eps}{\eps^2 +  (x_1)^2 } + \frac{2\eps}{\eps^2+ (x_1 )^2  } \frac{2x_1 \eps}{1 + \eps^2 ( x_1 )^2 } \right) \tilde{R}_{\eps} dx_1\\
\leq & 4 \left\| R_{\eps} \right\|_{\infty} \eps \int_0^{\frac{1}{\eps}} \left( \frac{2\eps}{1+ \eps^4 u^2 } \frac{2 u }{1 + u^2 } + \frac{2}{1+ u^2 } \frac{2u}{1 + \eps^4 u^2 }  \right) du \\
\leq & O\left( \left\| R_{\eps} \right\|_{\infty} \eps \right)
\end{split}
\end{equation*}
as $\eps\to 0$. We set
\begin{equation}\label{defalphaepsantisym} \alpha_{\eps} = \left\| R_{\eps} \right\|_{L^\infty\left(\mathbb{S}^1\right)} + \left\vert\delta_{\eps}\right\vert + \eps \hskip.1cm,\end{equation}
and we get that 
\begin{equation} \label{eqW12estonRepsantisym2}
\int_{\mathbb{D}} \left\vert \nabla R_{\eps}\right\vert^2 \leq O\left( \alpha_{\eps}^2 \right)
\end{equation}
as $\eps\to 0$. Letting $\hat{R}_{\eps}(x) = \tilde{R}_{\eps}(\eps x)$, we get from \eqref{eqRepssteklov} that
\begin{equation}\label{eqRepssteklov+}
\begin{cases}
\Delta \hat{R}_{\eps}= 0 &  \text{ in }  \mathbb{R}^2_+ \\
\begin{split} -\partial_{x_2} \hat{R}_{\eps} - e^{u}\hat{R}_{\eps} =& (\sigma_{\eps,2}-1)e^{u}
\hat{\varphi}_{\eps,2} + \eta_{\eps} e^u Z_2( \eps^2 x)  \\
&+ \frac{2\eps^2}{1+\eps^4 (x_1)^2}\left(\sigma_{\eps,2}\hat{\varphi}_{\eps,2} - \eta_{\eps}Z_2(\eps^2 x)   \right) \end{split} & \text{ on } \mathbb{R}\times\{0\} \hskip.1cm.
\end{cases}
\end{equation}
Dividing this equation by $\alpha_{\eps}$, by standard elliptic theory, we obtain up to the extraction of a subsequence that
$$ \frac{R_{\eps}^+}{\alpha_{\eps}} \to R_{\star}^+ \in \mathcal{C}^2\left(\mathbb{D}_{\rho}\right) $$
as $\eps\to 0$ for any $\rho>0$. We also have that up to the extraction of a subsequence, 
$$ \frac{\delta_{\eps}}{\alpha_{\eps}}\to \delta_{\star} \text{ and } \frac{\eps}{\alpha_{\eps}}\to e_{\star} \text{ as } \eps\to 0 \hskip.1cm. $$
Passing to the limit in \eqref{eqRepssteklov+} divided by $\alpha_{\eps}$, $R_{\star}^+$ satisfies the equation
\begin{equation}\label{eqRstarsteklov+}
\begin{cases}
\Delta R_{\star}^+= 0 & \text{ in } \mathbb{R}^2_+ \\
 -\partial_{x_2} R_{\star}^+ - e^{u}R_{\star}^+ = \delta_{\star} e^{u}
Z_2 + 2 e^u x_1 e_{\star} & \text{ on } \mathbb{R}\times\{0\} \hskip.1cm.
\end{cases}
\end{equation}
We set $R_{\star}:= R_{\star}^+\circ f_+$ in $\mathbb{D}\setminus\{(-1,0)\}$ and we obtain
\begin{equation}\label{eqRstarsteklov}
\begin{cases}
\Delta R_{\star} = 0 & \text{ in } \mathbb{D} \\
 -\partial_{x_2} R_{\star} - e^{u}R_{\star} = \left(\delta_{\star}
+ e_{\star}\frac{2}{\left\vert 1+ z \right\vert^2} \right) z_2 & \text{ on } \mathbb{S}^1\setminus\{(-1,0)\} \hskip.1cm.
\end{cases}
\end{equation}
Notice that the left-hand side in the secons equation is uniformly bounded and that by \eqref{eqW12estonRepsantisym2}, $R_{\star}\in W^{1,2}\left(\mathbb{D}\right)$ so that $R_{\star}$ can be extended in $\mathbb{D}$ such that the equation holds in $\mathbb{D}$. Integrating this equation against $z_2$, we obtain that 
$$ \delta_{\star}  = -\frac{2\int_{\mathbb{S}^1}\frac{(z_2)^2}{\left\vert 1+ z \right\vert^2}d\theta}{\int_{\mathbb{S}^1} (z_2)^2 d\theta} e_{\star} = -\frac{2\pi}{\frac{\pi}{2}} e_{\star} \hskip.1cm. $$
Notice that if $e_{\star} \neq 0$, then
\begin{equation}
\label{eqlimdeltaepssteklov}
\frac{\delta_{\eps}}{\eps} = \frac{\frac{\delta_{\eps}}{\alpha_{\eps}}}{\frac{\eps}{\alpha_{\eps}}} \to \frac{\delta_{\star}}{e_{\star}} = - 4 \hskip.1cm,
\end{equation}
and \eqref{eqprovesigma2eps} would be proved in the antisymmetric case.

\medskip

From now to the end of the subsection \ref{subsectionantisymmetriccase}, we assume by contradiction that $e_{\star}=0$. This implies that $\delta_{\star}=0$. By \eqref{defalphaepsantisym}, and definition of $\delta_{\star}$ and $e_{\star}$, we get
\begin{equation} \label{eqestimateondeltaepsantisym} \delta_{\eps} = o\left( \left\| R_{\eps} \right\|_{\infty}  \right) \text{ and } \eps = o\left( \left\| R_{\eps} \right\|_{\infty}  \right)
\end{equation}
as $\eps\to 0$. Moreover, \eqref{eqRstarsteklov+} becomes $\Delta R_{\star} = 0$ and $-\partial_{x_2}R_{\star}-R_{\star} = 0$. Then $R_{\star}$ is a first Steklov eigenfunction in $\mathbb{D}$ satisfying $R_{\star}(-1,0) = 0$ and $\nabla R_{\star}(-1,0) = 0$ since $\tilde{R}_{\eps,2}(0) = 0$ and $\nabla \tilde{R}_{\eps,2}(0) = 0$ held for any $\eps>0$. Therefore, $R_{\star} = 0$ and we obtain
\begin{equation} \label{eqestimateonRepsantisym} R_{\eps}(\rho \eps) = o\left( \left\| R_{\eps} \right\|_{\infty}  \right)
\end{equation}
as $\eps\to 0$, for any $\rho>0$. 

Let $x_{\eps}\in \mathbb{D} \cap \left(\mathbb{R}\times\{0\}\right)$ be such that $R_{\eps}(x_\eps) =\left\| R_{\eps} \right\|_{\infty}$. We obtain from \eqref{eqestimateonRepsantisym} that $\eps = o\left(r_{\eps}\right)$, letting $r_{\eps} = \left\vert x_\eps \right\vert$. We set $\psi_{\eps}(x) = R_{\eps}(r_\eps x)$ and $\phi_{\eps}(x) = \tilde{\varphi}_{\eps,2}(r_{\eps}x)$ and we obtain from \eqref{eqRepssteklov+}
\begin{equation}\label{eqpsiepssteklov+}
\begin{cases}
\Delta \psi_{\eps}= 0 &  \text{ in }  \mathbb{R}^2_+ \\
\begin{split} -\partial_{x_2} \psi_{\eps} = & \left(  \psi_{\eps} + \left(\sigma_{\eps,2}-1\right) \phi_{\eps} + \eta_{\eps}Z_2(r_{\eps}\eps x) \right)  \frac{2\frac{\eps}{r_{\eps}}}{\frac{\eps^2}{r_{\eps}^2} +( x_1 )^2} \\
 &+ \left(\sigma_{\eps,2}\phi_{\eps}-\eta_{\eps}Z_2(r_{\eps}\eps x)\right) \frac{2\eps r_{\eps}}{1+\eps^2r_{\eps}^2( x_1)^2}  \end{split} & \text{ on } \mathbb{R}\times\{0\} \hskip.1cm.
\end{cases}
\end{equation}
so that dividing by $\alpha_{\eps}$, we get that for any $\rho>1$, 
\begin{equation*}
\begin{split} \left\|  \partial_{x_2} \psi_{\eps}\right\|_{L^\infty\left(\left([-\rho,\rho]\setminus[-\frac{1}{\rho}, \frac{1}{\rho}]\right)\times \{0\}\right)} 
= O\left( \left(\left\| \psi_{\eps}\right\|_{\infty}  + \left\vert \delta_{\eps} \right\vert\right) \frac{\eps}{r_{\eps}}\right) + o(\eps)
\\\ = O\left( \left\| \psi_{\eps}\right\|_{\infty}\frac{\eps}{r_{\eps}}\right) +o(\eps) \hskip.1cm.
 \end{split} \end{equation*}
By standard elliptic theory, we obtain up to the extraction of a subsequence that 
$$\frac{\psi_{\eps}}{\alpha_{\eps}} \to \psi_{\star} \text{ in } \mathcal{C}^2\left(\mathbb{D}_\rho^+\setminus \mathbb{D}_{\frac{1}{\rho}}^+\right)$$
as $\eps\to 0$ for any $\rho>0$.  We also define up to the extraction of a subsequence
$$ \frac{x_{\eps}}{r_{\eps}} \to x_{\star}\in \{(\pm 1,0)\} \hbox{ as } \eps\to 0 \hskip.1cm. $$
Passing to the limit in \eqref{eqpsiepssteklov+} divided by $\alpha_{\eps}$, $\psi_{\star}$ is a harmonic function on $\mathbb{R}^2_+ \setminus\{0\}$ satisfying $\partial_{x_2}\psi_{\star}=0$. Since $\psi_{\star}$ is bounded by $1$, $\psi_{\star}$ is a constant function. But since the mean value of $\psi_{\star}$ is equal to $0$ on any circle centered at $0$, we obtain that $\psi_{\star}(x_{\star}) = 0$. Therefore $\left\| R_{\eps} \right\|_{L^\infty\left(\mathbb{S}_1\right)} = o\left(\alpha_{\eps}\right)$ and the definition of $\alpha_{\eps}$ \eqref{defalphaepsantisym} and \eqref{eqestimateondeltaepsantisym} give a contradiction.

\medskip

Therefore, $e_{\star}\neq 0$ and we proved \eqref{eqprovesigma2eps} thanks to \eqref{eqlimdeltaepssteklov} in the anti-symmetric case.

\subsubsection{The symmetric case} \label{subsectionsymmetriccase}
We assume \eqref{eqsym}. We aim at proving that $\delta_{\eps,2}:= \sigma_{\eps,2}-1 = o(\eps)$ as $\eps\to 0$, so that $\varphi_{\eps,2}$ cannot be a second eigenfunction by the previous case and it is a contradiction, and then, only \eqref{eqlimdeltaepssteklov} occurs.

Since $\varphi_{\eps,2}$ has to vanish somewhere but has at most three nodal domains, the symmetries \eqref{symmetrysecondeigenfunctionsteklov} and \eqref{eqsym} and the orthogonality with the first eigenfunction imply that there is a unique value $r_{\eps}\in (0,1)$ such that $ \tilde{\varphi}_{\eps,2}(r_{\eps},0) =  \tilde{\varphi}_{\eps,2}(-r_{\eps},0) = 0$. Morover $\tilde{\varphi}_{\eps,2}$ is positive on $(-r_{\eps},r_{\eps})\times\{0\}$ and negative on $\left([-1,1]\setminus[-r_{\eps},r_{\eps}]\right)\times\{0\}$.

\medskip

As for the first eigenfunction, we set
$$ m_{\eps,2}(r) = \frac{1}{\pi}\int_{0}^\pi \tilde{\varphi}_{\eps,2}(r\cos \theta,r\sin\theta)d\theta $$
the mean value of $\tilde{\varphi}_{\eps,2}$ on half circles. It is a radial function. We also set 
$$ \psi_{\eps,2}(x) = \tilde{\varphi}_{\eps,2}(x) - m_{\eps,2}(\left\vert x \right\vert) \hskip.1cm.$$
We have
\begin{equation}
\label{eqsteklovtonpsieps2}
\begin{cases}
\Delta \psi_{\eps,2} = - \Delta m_{\eps,2} = -\frac{\sigma_{\eps,2}}{\pi r}e^{\tilde{\omega}_{\eps}}\left(\tilde{\varphi}_{\eps,2}(r,0)+\tilde{\varphi}_{\eps,2}(-r,0)\right) & \text{ in } \mathbb{R}^2_+ \\
-\partial_{x_2} \psi_{\eps,2} = \sigma_{\eps,2}e^{\tilde{\omega}_{\eps}}\tilde{\varphi}_{\eps,2} & \text{ on } \mathbb{R}\times\{0\} \hskip.1cm.
\end{cases}
\end{equation}
As for the first eigenfunction, we easily prove that $\psi_{\eps,2}$ is uniformly bounded so that we have for any $\rho>0$
$$ \int_{\rho\eps}^1 \left(\psi_{\eps,2}\right)^2 e^{\tilde{\omega}_{\eps}} \leq \frac{C}{\rho}$$
as $\eps\to 0$. 
Integrating once the equation on $m_{\eps,2}$ \eqref{eqsteklovtonpsieps2}, we have 
\begin{equation*}
\begin{split} m_{\eps,2}'(r) = +\frac{\sigma_{\eps,2}}{r}\int_r^1 e^{\tilde{\omega_{\eps}}} \left(\tilde{\varphi}_{\eps,2}(s,0) + \tilde{\varphi}_{\eps,2}(-s,0)\right) \\ = -\frac{\sigma_{\eps,2}}{r} \int_0^r e^{\tilde{\omega}_{\eps}} \left(\tilde{\varphi}_{\eps,2}(s,0) + \tilde{\varphi}_{\eps,2}(-s,0)\right)
\end{split}
\end{equation*}
so that by the sign properties of $\tilde{\varphi}_{\eps,2}$, $m_{\eps,2}$ is a decreasing function. It realizes its maximum for $r=0$ and its minimum for $r=1$.

Now, we prove that $m_{\eps,2}$ is uniformly bounded. We have that 
\begin{equation}
\label{eqproveunifboundedsteklov}
\begin{split}
& m_{\eps,2}(r)  - m_{\eps,2}(1) = 2\int_{r}^1 \frac{1}{s}\left(\int_{s}^1 \sigma_{\eps,1}e^{\tilde{\omega}_{\eps}}(-\tilde{\varphi}_{\eps,2})dt \right)ds \\
\leq & -2\tilde{\varphi}_{\eps,2}(1,0) \sigma_{\eps,2} \int_r^1 \frac{1}{s} \left(\int_{s}^1 e^{\tilde{\omega}_{\eps}}dt\right)ds \\
\leq & -2\tilde{\varphi}_{\eps,2}(1,0) \sigma_{\eps,2}\left(\ln\frac{\eps}{r}\left(\arctan\frac{1}{\eps}-\arctan\frac{r}{\eps}\right) + \int_{\frac{r}{\eps}}^{\frac{1}{\eps}}\frac{2\ln u}{1+u^2}du +O(\eps)\right)
\end{split}
\end{equation}
by a straightforward integral computation. Notice also that elliptic estimates on \eqref{eqsteklovtonpsieps2} imply that
$$ \tilde{\varphi}_{\eps,2}(1,0) - m_{\eps,2}(1)  = \psi_{\eps,2}(1,0) = O\left(\eps\right) $$
as $\eps\to 0$. Then, choosing $r=\rho \eps$ in  \eqref{eqproveunifboundedsteklov}, we obtain
\begin{equation}
\label{eqproveunifboundedsteklovrho}
m_{\eps,2}(r) - m_{\eps,2}(1) 
\leq  -2m_{\eps,2}(1) \sigma_{\eps,2}\left(\ln\frac{1}{\rho}\left(\frac{\pi}{2}-\arctan\rho \right) \right)
\end{equation}
for any $\eps>0$ small enough. Then, for $\rho$ large enough too, 
\begin{equation}
\label{eqproveunifboundedsteklovrho2}
- m_{\eps,2}(1) \left(1- 2\sigma_{\eps,2}\left(\ln\frac{1}{\rho}\left(\frac{\pi}{2}-\arctan\rho \right) \right) \right)  
\leq  -m_{\eps,2}(\rho) 
\end{equation}
proves that $m_{\eps,2}$ is uniformly bounded. We then have for any $\rho>0$ that
$$ \int_{\rho\eps}^1 \left(m_{\eps,2}\right)^2 e^{\tilde{\omega}_{\eps}} \leq \frac{C}{\rho}$$
as $\eps\to 0$. We deduce from this estimate on $m_{\eps,2}$ and the similar one on $\psi_{\eps,2}$ that for any $\rho>0$,
$$ \int_{\rho\eps}^1 \left(\tilde{\varphi}_{\eps,2}\right)^2 e^{\tilde{\omega}_{\eps}} \leq \frac{C}{\rho}$$
as $\eps\to 0$. Letting $\eps\to 0$ and then $\rho\to 0$, remembering \eqref{eqL2valuessecondsteklov}, \eqref{eqorthogonalitysteklovwithsymmetry2} and \eqref{equnifconvpgieps2steklov}, we get that
$$ \int_{\mathbb{R}\times\{0\}} \left(\varphi_{\star,2}^+\right)^2 e^u dx = \frac{\pi}{2} 
\text{ and } 
\int_{\mathbb{R}\times\{0\}}\varphi_{\star,2}^+ e^u dx = 0 \hskip.1cm. $$
By orthogonality with the constant function, we must have $\sigma_{\star,2}=2$ and $\varphi_{\star,2}^+ \circ f_+$ is a first Steklov eigenfunction of the disk  that is
$$ \varphi_{\star,2}^+ = a_{\star,1} Z_1 +a_{\star,2} Z_2 \hskip.1cm,$$
where $Z_1 = Re(f_+^{-1})$ and $Z_2 = Im(f_+^{-1})$ and $\left(a_{\star,1} \right)^2 +\left(a_{\star,2}\right)^2=1$. By symmetry \eqref{eqsym}, $a_{\star,1}=1$ and $a_{\star,2}=0$.

\medskip

Now, we aim at estimating $\delta_{\eps} = \left\vert \sigma_{\eps,2}-1 \right\vert$. 

\medskip

We set 
$$ R_{\eps}(x)= \tilde{\varphi}_{\eps,2}(x) - \tilde{\varphi}_{\eps,2}(0)Z_1\left(\frac{x}{\eps}\right)  $$
so that $R_{\eps}$ satisfies $R_{\eps}(0)=0$ and the symmetry $R_{\eps}(x_1,x_2) = R_{\eps}(-x_1,x_2) $ for any $(x_1,x_2) \in \mathbb{D}_+$. We have
\begin{equation} \label{eqonRepssteklovsymcase}
\begin{cases}
\Delta R_{\eps} = 0 & \text{ in } \mathbb{D}_+ \\
-\partial_{x_2} R_{\eps} - e^{\tilde{u}_{\eps}}R_{\eps} = \left(\sigma_{\eps,2}-1\right)e^{\tilde{u}_{\eps}}\tilde{\varphi}_{\eps,2}+\sigma_{\eps,2}e^{\tilde{v}_{\eps}}\tilde{\varphi}_{\eps,2} & \text{ on } [-1,1]\times\{0\} \hskip.1cm.
\end{cases}
\end{equation}
Integrating this equation against $R_{\eps}$, we obtain 
\begin{equation}\label{eqW12estonRepssym}
\begin{split}
\int_{\mathbb{D}_+} \left\vert \nabla R_{\eps}\right\vert^2 = \int_{\mathbb{S}^1_+} R_{\eps}\partial_{\nu} R_{\eps} + \left(\sigma_{\eps,2}-1\right) \int_{[-1,1]\times\{0\}}e^{\tilde{u}_{\eps}} \tilde{\varphi}_{\eps,2}R_{\eps} \\
+ \sigma_{\eps}^2 \int_{[-1,1]\times\{0\}} e^{\tilde{v}_{\eps}} \tilde{\varphi}_{\eps,2} R_{\eps} \\
\leq  \left\| \varphi_{\eps,2} \right\|_{\infty} \left\| R_{\eps} \right\|_{\infty} \left( \int_{\mathbb{S}^1_+} \left\vert \left(\partial_{\nu} Z_1\left(\frac{x}{\eps}\right)\right) \right\vert + \left\| R_{\eps} \right\|_{\infty} +2 \left\vert \sigma_{\eps,2} -1 \right\vert + O\left(\eps\right) \right) \\
\leq  C \left\| R_{\eps} \right\|_{\infty} \left(\left\| R_{\eps} \right\|_{\infty} + \delta_{\eps} + \eps  \right)
\end{split}
\end{equation}
for some constant $C$ independant from $\eps$, where $\left\| . \right\|_{\infty}$ denotes the uniform norm in $\mathbb{S}^1_+$. Here we easily computed that $\left\vert \partial_{\nu}\left( Z_1\left(\frac{x}{\eps}\right)\right) \right\vert = O(\eps)$ uniformly on $\mathbb{S}^1_+$. Letting 
\begin{equation}\label{defalphaepssym} \alpha_{\eps} = \left\| R_{\eps} \right\|_{\infty} + \delta_{\eps} + \eps^2 \hskip.1cm,\end{equation}
we get that 
\begin{equation} \label{eqW12estonRepssym2}
\int_{\mathbb{D}_+} \left\vert \nabla R_{\eps}\right\vert^2 \leq O\left( \alpha_{\eps}^2 \right)
\end{equation}
as $\eps\to 0$. Letting $\hat{R_{\eps}}(x) = R_{\eps}(\eps x)$, we get from \eqref{eqonRepssteklovsymcase} that
\begin{equation}
\label{eqonRNepssym}
\begin{cases}
\Delta \hat{R}_{\eps}  =0 & \text{ in } \mathbb{D}_{\frac{1}{\eps}}^+ \\
-\partial_{x_2}\hat{R}_{\eps} - e^{u} \hat{R}_{\eps} = \left(\sigma_{\eps,2}-1\right)e^{ u}\hat{\varphi}_{\eps,2} + \sigma_{\eps,2} \frac{2\eps^2}{1+\eps^4 \left\vert x \right\vert^2} \hat{\varphi}_{\eps,2} & \text{ on } [-\frac{1}{\eps},\frac{1}{\eps}]\times\{0\} \hskip.1cm.
\end{cases}
\end{equation}
Dividing by $\alpha_{\eps}$, by standard elliptic theory, we obtain up to the extraction of a subsequence that for any $\rho>0$,
\begin{equation} \label{defRNstarsym} \frac{\hat{R}_{\eps}}{\alpha_{\eps}} \to \hat{R}_{\star} \hbox{ in } \mathcal{C}^{2}\left(\mathbb{D}_\rho^+\right) \end{equation}
as $\eps\to 0$. We also have up to the extraction of a subsequence that $\frac{\delta_{\eps}}{\alpha_{\eps}} \to \delta_{\star}$ as $\eps\to 0$ and $\frac{\eps^2}{\alpha_{\eps}}\to 0$ as $\eps\to 0$. Letting $\eps\to 0$ in \eqref{eqonRNepssym}, we deduce
\begin{equation}
\label{eqonhatRstarsym}
\begin{cases}
\Delta \hat{R}_{\star}  =0 & \text{ in } \mathbb{R}^2_+ \\
-\partial_{x_2}\hat{R}_{\star} - e^{u} \hat{R}_{\star} = \delta_{\star}e^u Z_1 & \text{ on } \mathbb{R}\times\{0\} \hskip.1cm.
\end{cases}
\end{equation}
Setting $R_{\star} = \hat{R}_{\star}\circ f_+$ in $\mathbb{D}\setminus \{(-1,0)\}$, we obtain
\begin{equation}
\label{eqonRstarsym}
\begin{cases}
\Delta R_{\star}  =0 & \text{ in } \mathbb{D} \\
-\partial_{r} R_{\star} - R_{\star} = \delta_{\star}z_1 & \text{ on } \mathbb{S}^1\setminus \{(-1,0)\} \hskip.1cm,
\end{cases}
\end{equation}
By \eqref{eqW12estonRepssym2}, we can extend $R_{\star}$ in $\mathbb{D}$ so that \eqref{eqonRstarsym} is satisfied in $\mathbb{D}$. We integrate \eqref{eqonhatRstarsym} against $z_1$ and we immediatly get $\delta_{\star} =0$. By definition of $\delta_{\star}$, $\delta_{\eps}= o(\alpha_{\eps})$ as $\eps\to 0$ and we obtain by \eqref{defalphaepssym}:
\begin{equation} \label{eqestimateondeltaepssteklov} \delta_{\eps} = o\left( \left\| R_{\eps} \right\|_{\infty} +  \eps \right) 
\end{equation}
as $\eps\to 0$. Moreover, we obtain that $\Delta R_{\star} = 0$ and $\partial_r R_{\star} = R_{\star}$. This means that $R_{\star}$ is a first eigenfunction in $\mathbb{D}$. Since $R_{\star}$ satisfies 
$$R_{\star}(z_1,z_2) = - R_{\star}(-z_1,z_2) = R_{\star}(z_1,-z_2) \text{ and } R_{\star}(-1,0)=0 \hskip.1cm,$$ 
we obtain that $R_{\star}=0$. We obtain that for any $\rho>0$
\begin{equation} \label{eqestimateonRepsReps} R_{\eps}(\rho \eps ) = o\left( \left\| R_{\eps} \right\|_{\infty} +  \eps \right) 
\end{equation}
as $\eps\to 0$.

\medskip

We prove from now to the end of the subsection that
\begin{equation} \label{eqestonnormRepssteklov}\left\| R_{\eps} \right\|_{\infty} = O\left(\eps\right) \end{equation}
as $\eps\to 0$. We split $R_{\eps}(x) = A_{\eps}(x)+M_{\eps}(\left\vert x \right\vert)$ such that
$$ M_{\eps}(r) = \frac{1}{\pi}\int_0^\pi R_{\eps}(r\cos\theta,r\sin\theta)d\theta = m_{\eps}(r) -\frac{1}{\pi}\int_0^\pi Z_1\left(\frac{(rcos\theta, r\sin\theta)}{\eps}\right) d\theta$$
is the mean value of $R_{\eps}$ on circles. From \eqref{eqestimateonRepsReps}, we know that 
\begin{equation}\psi_{\eps,2}(\rho \eps ) = o\left( \left\| R_{\eps} \right\|_{\infty} +  \eps \right) \text{ and } M_{\eps}(\rho \eps ) = o\left( \left\| R_{\eps} \right\|_{\infty} +  \eps \right) \end{equation}
as $\eps\to 0$ for any $\rho>0$. 

Let $0 \leq r_{\eps}\leq 1$ be such that $R_{\eps}(r_{\eps}) = \left\| R_{\eps} \right\|_{\infty} $. If $r_{\eps} = O(\eps)$, we deduce \eqref{eqestonnormRepssteklov} from \eqref{eqestimateonRepsReps}. 

Let's prove that uniformly on $x\in \mathbb{D}_2\setminus \mathbb{D}_{\frac{1}{2}}$,
\begin{equation}\label{eqconvergencepsieps2inftystekov} A_{\eps}(r_{\eps}x) = O\left(\frac{\eps}{r_{\eps}} \left\| R_{\eps} \right\|_{\infty} + r_{\eps} \eps \right) \text{ and } A_{\eps}(\eps x) = O\left(\eps \right) \end{equation}
as $\eps \to 0$. 

We have that $A_{\eps} = \psi_{\eps,2} - \mu_{\eps}$.
Similarly to equation \eqref{eqsteklovtonpsieps2} on $\psi_{\eps,2}$, we obtain
\begin{equation}
\label{eqsteklovtonpsieps3}
\begin{cases}
\begin{split}\Delta A_{\eps} = - \Delta M_{\eps} =& -\frac{\sigma_{\eps,2}}{\pi r}e^{\tilde{\omega}_{\eps}}\left(\tilde{\varphi}_{\eps}(r,0)+\tilde{\varphi}_{\eps}(-r,0)\right)  \\
& +  \frac{1}{\pi r}e^{\tilde{u}_{\eps}} \left(Z_1\left(\frac{(r,0)}{\eps}\right)+Z_1\left(\frac{(-r,0)}{\eps}\right)\right) 
\end{split}
& \text{ in } \mathbb{R}^2_+  \\
-\partial_{x_2} A_{\eps} = \sigma_{\eps,2}e^{\tilde{\omega}_{\eps}}\tilde{\varphi}_{\eps} - e^{\tilde{u}_{\eps}}Z_1\left(\frac{x}{\eps}\right)  & \text{ on } \mathbb{R}\times\{0\} \hskip.1cm.
\end{cases}
\end{equation}
we rewrite as
\begin{equation}
\label{eqsteklovtonpsieps4}
\begin{cases}
\begin{split}\Delta A_{\eps} =& -\frac{\sigma_{\eps,2}}{\pi r}e^{\tilde{\omega}_{\eps}}\left(R_{\eps}(r,0)+R_{\eps}(-r,0)\right)  \\
& +  \frac{1}{\pi r}\left((\sigma_{\eps,2}-1)e^{\tilde{u}_{\eps}} +\sigma_{\eps,2}e^{\tilde{v}_{\eps}} \right) \left(Z_1\left(\frac{(r,0)}{\eps}\right)+Z_1\left(\frac{(-r,0)}{\eps}\right)\right)  
\end{split}
 \\
-\partial_{x_2} A_{\eps} = \sigma_{\eps,2}e^{\tilde{\omega}_{\eps}}R_{\eps} +  (\sigma_{\eps,2}-1)e^{\tilde{u}_{\eps}}Z_1\left(\frac{x}{\eps}\right) + \sigma_{\eps,2}e^{\tilde{v}_{\eps}}Z_1\left(\frac{x}{\eps}\right)  
\end{cases}
\end{equation}
Similarly to arguments on \eqref{eqpsiepssteklov+} at the end of subsection \ref{subsectionantisymmetriccase}, working at the scales $r_{\eps}$ and $\eps$ give \eqref{eqconvergencepsieps2inftystekov}.

\medskip

Now, from the equation \eqref{eqonRepssteklovsymcase}, we deduce
\begin{equation}
\label{eqonMepssym}
\begin{split}
\Delta M_{\eps}  = \frac{e^{\tilde{u}_{\eps}}}{r} \left(R_{\eps}(r,0)+R_{\eps}(-r,0)\right) +
 \left(\sigma_{\eps,2}-1\right)\Delta m_{\eps} \\ + \frac{e^{\tilde{v}_{\eps}}}{r}\left(\tilde{\varphi}_{\eps,2}(r,0)+\tilde{\varphi}_{\eps,2}(-r,0) \right)\hskip.1cm.
 \end{split}
\end{equation}
Integrating on this equation, we have for $r\leq 1$:
\begin{equation}
\label{eqsymestonRepssteklov}
\begin{split}
M_{\eps}(r) &  - M_{\eps}(1) =  -\int_{1}^r \frac{1}{s}\left(\int_{1}^s \Delta M_{\eps} t dt\right) ds \\
= &  -\int_{1}^r \frac{1}{s}\left(\int_{1}^s  e^{\tilde{u}_{\eps}} \left(R_{\eps}(t,0)+R_{\eps}(-t,0) \right)dt\right) ds \\
& + \left(\sigma_{\eps,2} - 1\right)\left(m_{\eps}(r)-m_{\eps}(1)\right)\\
& -  \int_{1}^r \frac{1}{s} \left(\int_{1}^s e^{\tilde{v}_{\eps}}\left(\tilde{\varphi}_{\eps,2}(t,0)+\tilde{\varphi}_{\eps,2}(-t,0) \right) dt \right)ds  + (1-r)\left(R_{\eps}\right)'(1) 
\end{split}
\end{equation}
So that since $\left(R_{\eps}\right)'(1) = O\left(\eps\right)$, that 
$$ \int_{1}^r \frac{1}{s}\left(\int_{1}^s  e^{\tilde{u}_{\eps}}dt\right) ds = \ln\frac{\eps}{r}\left(\arctan\frac{1}{\eps}-\arctan\frac{r}{\eps}\right) +\int_{\frac{r}{\eps}}^{\frac{1}{\eps}}\frac{2\ln u}{1+u^2}du$$
that $m_{\eps}$ is uniformly bounded and by definition of $\delta_{\eps}$ we obtain
\begin{equation} \label{eqMeps}
\begin{split}
\left\vert M_{\eps}(r) - M_{\eps}(1) \right\vert \leq \left\| R_{\eps} \right\|_{\infty}\left(\ln\frac{\eps}{r}\left(\arctan\frac{1}{\eps}-\arctan\frac{r}{\eps}\right) 
+ \int_{\frac{r}{\eps}}^{\frac{1}{\eps}}\frac{2\ln u}{1+u^2}du \right) \\ + O (\delta_{\eps} + \eps)\hskip.1cm.
\end{split}
\end{equation}
In particular, for $r=r_{\eps}$, with \eqref{eqestimateondeltaepssteklov}, $R_{\eps}(r_{\eps}) = \left\| R_{\eps} \right\|_{\infty}$, $\eps=o(r_{\eps})$ and \eqref{eqconvergencepsieps2inftystekov}, we obtain that 
\begin{equation}\label{eqfinalsteklov1} \left\| R_{\eps} \right\|_{\infty} \leq \left\vert R_{\eps}(1)\right\vert +O(\eps)  \end{equation}
and applying again \eqref{eqMeps} for $r=\rho \eps$ with $\rho$ large enough, we obtain that 
\begin{equation}\label{eqfinalsteklov2} \left\vert R_{\eps}(1) \right\vert \leq 2\left\vert R_{\eps}(\rho\eps) \right\vert + O\left(\eps\right) \end{equation}
as $\eps\to 0$. By \eqref{eqestimateonRepsReps}, we obtain \eqref{eqestonnormRepssteklov}.

\medskip

We conclude that $\delta_{\eps} = o(\eps)$ from \eqref{eqestimateondeltaepssteklov} and  \eqref{eqestonnormRepssteklov}, and subsection \ref{subsectionsymmetriccase} is complete.

\bibliographystyle{alpha}
\bibliography{mybibfile}

\nocite{*}

\end{document}